\newtheorem{thm}{Theorem}[section]
\newtheorem{cor}[thm]{Corollary}
\newtheorem{lem}[thm]{Lemma}
\newtheorem{prop}[thm]{Proposition}
\newtheorem{mainthm}{Theorem}
\theoremstyle{definition}
\newtheorem{defn}[thm]{Definition}
\newtheorem{ex}[thm]{Example}
\newtheorem{notation}[thm]{Notation}
\newtheorem{rmk*}[thm]{}
\theoremstyle{remark}
\newtheorem{rmk}[thm]{Remark}
\newcommand{\R}{\mathbb R}
\newcommand{\C}{\mathbb C}
\newcommand{\N}{\mathbb N}
\newcommand{\tr}{\mathrm{tr}}
\newcommand{\Tr}{\mathrm{Tr}}
\newcommand{\mc}[1]{\mathcal{#1}}
\newcommand{\mb}[1]{\mathbb{#1}}
\begin{document}

\title[Asymptotic infinitesimal freeness]{Asymptotic infinitesimal freeness with amalgamation for Haar quantum unitary random matrices}

\begin{abstract}
We consider the limiting distribution of $U_NA_NU_N^*$ and $B_N$ (and more general
expressions), where $A_N$ and $B_N$ are $N \times N$ matrices with entries in a unital
C$^*$-algebra $\mc B$ which have limiting $\mc B$-valued distributions as $N \to \infty$,
and $U_N$ is a $N \times N$ Haar distributed quantum unitary random matrix with entries
independent from $\mc B$.  Under a boundedness assumption, we show that $U_NA_NU_N^*$ and
$B_N$ are asymptotically free with amalgamation over $\mc B$.  Moreover, this also holds
in the stronger infinitesimal sense of Belinschi-Shlyakhtenko.

We provide an example which demonstrates that this example may fail for classical Haar
unitary random matrices when the algebra $\mc B$ is infinite-dimensional.
\end{abstract}

\author[S. Curran]{Stephen Curran}
\address{S.C.: Department of Mathematics, University of California, Berkeley, CA 94720, USA.}
\email{\href{mailto:curransr@math.berkeley.edu}{curransr@math.berkeley.edu}}

\author[R. Speicher]{Roland Speicher$^{(\dag)}$}
\address{R.S.: Department of Mathematics and Statistics, Queen's University, Jeffery Hall, Kingston, Ontario K7L 3N6, Canada.}
\email{\href{mailto:speicher@mast.queensu.ca}{speicher@mast.queensu.ca}}
\thanks{$\dag$ Research supported by a Discovery grant from NSERC}

\subjclass[2000]{46L54 (60B12, 46L65)} \keywords{Free probability, asymptotic freeness,
quantum unitary group}

\maketitle

\section{Introduction}
One of the most important results in free probability theory is Voiculescu's asymptotic
freeness for random matrices \cite{voi4}.  One simple form of this result is the
following.  Let $A_N$ and $B_N$ be (deterministic) $N \times N$ matrices with complex
entries, and suppose that $A_N$ and $B_N$ have limiting distributions as $N \to \infty$
with respect to the normalized trace on $M_N(\C)$.  Let $(U_N)_{N \in \N}$ be a sequence
of $N \times N$ unitary random matrices, distributed according to Haar measure.  Then
$U_NA_NU_N^*$ and $B_N$ are asymptotically freely independent as $N \to \infty$.
Moreover, freeness holds ``up to $O(N^{-2})$'', which can be interpreted as
\textit{infinitesimal freeness} in the sense of Belinschi-Shlyakhtenko \cite{bsh}.

On the other hand, it is becoming increasingly apparent that in free probability, the
roles of the classical groups are played by certain ``free'' quantum groups.  This can
most clearly be seen in the study of quantum distributional symmetries, originating with
the free de Finetti theorem of K\"{o}stler and Speicher \cite{ksp} and further developed
in \cite{cur3,cur4,bcs2}, in which the classical permutation, orthogonal and unitary
groups are replaced by Wang's universal compact quantum groups \cite{wang1,wang2}.  For a
general discussion of the passage from classical groups to free quantum groups, see
\cite{bs1}.

In this paper, we will consider the limiting distribution of $U_NA_NU_N^*$ and $B_N$,
where $A_N$ and $B_N$ are as above, but $U_N$ is now a Haar distributed $N \times N$
\textit{quantum} unitary random matrix, in the sense of Wang \cite{wang1}.  We will show
that asymptotic (infinitesimal) freeness now holds even if the entries of $A_N$ and $B_N$
are allowed to take values in an arbitrary unital C$^*$-algebra $\mc B$:

\begin{mainthm}\label{easythm}
Let $\mc B$ be a unital C$^*$-algebra and let $A_N, B_N \in M_N(\mc B)$ for $N \in \N$.
Assume that there is a finite constant $C$ such that $\|A_N\| \leq C$, $\|B_N\| \leq C$
for all $N \in \N$.  For each $N \in \N$, let $U_N$ be a Haar distributed $N \times N$
quantum unitary random matrix, with entries independent from $\mc B$.
\begin{enumerate}
 \item Suppose that there are linear maps $\mu_A,\mu_B:\mc B\langle t \rangle \to \mc B$ such that for any
 $b_0,\dotsc,b_k \in \mc B$,
\begin{align*}
\lim_{N \to \infty} \|(\tr_N \otimes \mathrm{id}_{\mc B})[b_0A_Nb_1\dotsb A_Nb_k] - \mu_A[b_0tb_1\dotsb tb_k]\| &= 0\\
\lim_{N \to \infty} \|(\tr_N \otimes \mathrm{id}_{\mc B})[b_0B_Nb_1\dotsb B_Nb_k] -
\mu_B[b_0tb_1\dotsb tb_k]\| & = 0,
\end{align*}
where $\tr_N$ denotes the normalized trace on $M_N(\C)$.  Then $U_NA_NU_N^*$ and $B_N$
are asymptotically free with amalgamation over $\mc B$.

\item Suppose that in addition, the limits
\begin{align*}
& \lim_{N \to \infty} {N}\Bigl\{(\tr_N \otimes \mathrm{id}_{\mc B})[b_0A_Nb_1\dotsb A_Nb_k]
 - \mu_A[b_0tb_1\dotsb tb_k]\Bigr\}\\
& \lim_{N \to \infty} {N}\Bigl\{(\tr_N \otimes \mathrm{id}_{\mc B})[b_0B_Nb_1\dotsb
B_Nb_k] - \mu_B[b_0tb_1\dotsb tb_k]\Bigr\}
\end{align*}
converge in norm for any $b_0,\dotsc,b_k \in \mc B$.  Then $U_NA_NU_N^*$ and $B_N$ are
asymptotically infinitesimally free with amalgamation over $\mc B$.
\end{enumerate}

\end{mainthm}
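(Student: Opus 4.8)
The plan is to compute all mixed $\mc B$-valued moments of $U_NA_NU_N^*$ and $B_N$ by means of the Weingarten calculus for the free unitary quantum group $A_u(N)$, and to match the resulting asymptotic expansion — to leading order for part~(1), and up to order $N^{-1}$ for part~(2) — with the moment formulas for a pair of elements that is free (respectively, infinitesimally free) with amalgamation over $\mc B$ and has distributions $\mu_A$ and $\mu_B$. First I would fix the ambient $\mc B$-valued $C^*$-probability space: adjoin the Haar-state algebra $(\mathcal O(A_u(N)),\psi_N)$ so that $U_NA_NU_N^*$ and $B_N$ sit in $(M_N(\mc B)\otimes\mathcal O(A_u(N)),E_N,\mc B)$ with $E_N=\tr_N\otimes\mathrm{id}_{\mc B}\otimes\psi_N$ and common subalgebra $\mc B$. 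Since everything is uniformly norm-bounded by hypothesis, asymptotic freeness with amalgamation amounts to showing that for every word $Y_1,\dotsc,Y_n$ in $\{U_NA_NU_N^*,B_N\}$ and all $b_0,\dotsc,b_n\in\mc B$ one has
\[
\lim_{N\to\infty}\bigl\|E_N[b_0Y_1b_1\dotsb Y_nb_n]-\Phi[b_0\tilde Y_1b_1\dotsb\tilde Y_nb_n]\bigr\|=0,
\]
where $(\tilde A,\tilde B)$ is free with amalgamation over $\mc B$ in some $(\mc M,\Phi,\mc B)$, $\tilde A\sim\mu_A$, $\tilde B\sim\mu_B$, and $\tilde Y_i$ is the corresponding word (that $U_NA_NU_N^*$ itself has limiting distribution $\mu_A$ falls out of the same computation). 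Expanding the matrix products, the left-hand expectation becomes a sum over matrix indices of $\psi_N$ applied to a product of entries $u_{ij}$ and $\bar u_{kl}$ — one conjugate pair for each occurrence of $U_NA_NU_N^*$ — weighted by entries of $A_N$, $B_N$ and by the $b_i$.

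Next I would apply the Weingarten formula for $A_u(N)$: $\psi_N$ of such a product equals a sum over pairs $(\pi,\sigma)$ of non-crossing pairings matching the $u$-legs to the $\bar u$-legs, with weight $\mathrm{Wg}_N(\pi,\sigma)$ and Kronecker-delta constraints on the indices determined by $\pi$ and $\sigma$. Resolving these constraints makes the index sums factor along the loops of $\pi$ and $\sigma$: each loop passing through $A_N$-entries collapses, by the hypothesis of~(1), to a value $\mu_A[b_{i_1}tb_{i_2}t\dotsb]$ with the $b$'s read in cyclic order along the loop (up to an error that is uniformly small thanks to $C$ and the assumed norm convergence), each loop through $B_N$-entries to a value of $\mu_B$, and one picks up an explicit power of $N$ from the $\tr_N$ normalization and the count of free index summations. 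Combining this with the known expansion of the Weingarten function in powers of $N^{-1}$ — whose leading coefficient is the Möbius function of the non-crossing partition lattice — produces a genuine asymptotic expansion
\[
E_N[b_0Y_1b_1\dotsb Y_nb_n]=\Theta_0+N^{-1}\Theta_1+O(N^{-2}),
\]
uniform in the $b_i$, where $\Theta_0,\Theta_1\in\mc B$ are finite sums of products of $\mu_A$- and $\mu_B$-values and, in the case of $\Theta_1$, also of the first-order limits (call them $\mu_A',\mu_B'$) appearing in the hypothesis of~(2).

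For part~(1) I would identify $\Theta_0$ with the free-with-amalgamation moment $\Phi[b_0\tilde Y_1b_1\dotsb\tilde Y_nb_n]$: either by checking directly that $\Theta_0$ satisfies the operator-valued moment–cumulant relations characterizing freeness over $\mc B$, or, more conceptually, by observing that the identical leading-order combinatorial sum arises when one carries out the same loop-resolution for a genuine Haar unitary free over $\mc B$ conjugating an element of distribution $\mu_A$, which by the operator-valued analogue of the standard conjugation formula yields an element free from $\mc B\langle\mu_B\rangle$ over $\mc B$. For part~(2), with the coefficient $\Theta_1$ in hand, I would invoke the characterization of infinitesimal freeness with amalgamation of Belinschi–Shlyakhtenko \cite{bsh} — equivalently, the vanishing of mixed infinitesimal $\mc B$-cumulants, or the differentiated moment–cumulant relations — and verify that $\Theta_1$ is precisely the derivative of the formula for $\Theta_0$ along the one-parameter families $\mu_A+\varepsilon\mu_A'$, $\mu_B+\varepsilon\mu_B'$ together with the first-order Weingarten correction; this derivative automatically has the shape dictated by infinitesimal freeness because $\Theta_0$ already has the free shape and the Weingarten correction enters only through the already-free leading structure. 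I expect the main obstacle to be the operator-valued bookkeeping in the loop-resolution step: because the $b_i$ do not commute with matrix units, one must track the exact cyclic interleaving of the $b_i$ along each loop and show it matches the nesting of the non-crossing pairing, and one must propagate the norm hypotheses through a sum of boundedly many terms so that all errors are genuinely $o(1)$ (respectively $o(N^{-1})$) — which is exactly where the uniform bound $\|A_N\|,\|B_N\|\le C$ is indispensable.
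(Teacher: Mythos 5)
Your overall strategy---expand the mixed $\mc B$-valued moments via the Weingarten formula for $A_u(N)$, resolve the index constraints along the non-crossing pairings, and match the leading (resp.\ first-order) term with the moments of a free (resp.\ infinitesimally free) pair---is the same as the paper's. But there is a genuine gap at the central step, namely the assertion that after resolving the Kronecker constraints ``each loop passing through $A_N$-entries collapses to a value $\mu_A[\cdots]$'' so that the whole expression becomes a finite sum of products of $\mu_A$- and $\mu_B$-values. This is only true for those pairs of non-crossing pairings $(\widetilde\pi,\widetilde\sigma)$ whose interleaving $\sigma\wr K(\pi)$ is again non-crossing, i.e.\ $\sigma\leq\pi$ (Proposition \ref{expfunct}). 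For the remaining pairs the constrained sum
\begin{equation*}
\sum_{\substack{1 \leq j_1,\dotsc,j_{2m} \leq N\\ \widetilde \sigma \leq \ker \mathbf j}} \; \sum_{\substack{1 \leq i_1,\dotsc,i_{2m} \leq N\\ \widetilde{K(\pi)} \leq \ker \mathbf i}} A(1)_{j_1j_2}A(2)_{i_1i_2}\dotsb A(2m)_{i_{2m-1}i_{2m}}
\end{equation*}
does \emph{not} factor into a product of an ``$A$-part'' and a ``$B$-part'', because the entries of $M_N(\mc B)$ do not commute; it is therefore not expressible through $\mu_A$ and $\mu_B$ at all, and your asserted expansion $\Theta_0+N^{-1}\Theta_1+O(N^{-2})$ with $\Theta_0,\Theta_1$ built from $\mu_A,\mu_B,\mu_A',\mu_B'$ is not established. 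These incompatible terms must instead be killed by combining the decay of the Weingarten weight with a separate \emph{operator-norm} bound $N^{|\sigma|+|K(\pi)|}\|A(1)\|\dotsb\|A(2m)\|$ on the constrained sum; proving that bound is the key technical input of the paper (Proposition \ref{normbound}, obtained by realizing the sum as $\Tr_{N^{|\sigma|}}\otimes\mathrm{id}_{\mc B}$ of a product in $M_{N^{|\sigma|}}(\mc B)$ and invoking contractivity of the normalized trace). That this point cannot be waved away is exactly the content of Example \ref{counterexample}: classically the crossing pairings contribute, their constrained sums have anomalously large norm when $\mc B$ is noncommutative, and asymptotic freeness fails.

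Two smaller remarks. First, the entries of the adjoint are $(U^*)_{ij}=U_{ji}^*$, and to line up the row-index constraints with one family of loops and the column-index constraints with the other, the paper needs a transposition identity special to the quantum unitary group (Proposition \ref{adjint}, proved via Banica's decomposition $U=zO$); your expansion treats adjoint entries as conjugate entries without comment. Second, for part (2) the paper never computes $\Theta_1$ or verifies differentiated moment--cumulant relations: Theorem \ref{mainthm} shows the moments agree with those of an \emph{exactly} free family (built over the $N$-dependent distribution of the matrices) up to $O(N^{-2})=o(1/N)$, and Proposition \ref{infexample} transfers this to infinitesimal freeness in the limit. Your plan of differentiating $\Theta_0$ along $\mu_A+\varepsilon\mu_A'$, $\mu_B+\varepsilon\mu_B'$ could in principle be carried out, but the claim that the derivative ``automatically has the shape dictated by infinitesimal freeness'' is precisely what must be proved; it is the content of that transfer principle rather than a formal consequence of $\Theta_0$ having the free shape.
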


We will present more general asymptotic freeness results in Section 5, in particular
Theorem \ref{easythm} will be a special case of Corollary \ref{qrotate}.

For finite-dimensional $\mc B$, we show in Proposition \ref{finitedim} that classical
Haar unitary random matrices are sufficient to obtain such a result.  However, classical
unitaries are in general insufficient for asymptotic freeness with amalgamation, even
within the class of \textit{approximately finite dimensional} C$^*$-algebras, and so it
is indeed necessary to allow quantum unitary transformations.  We will discuss this
further in the second part of Section 5, see in particular Example \ref{counterexample}
and the remarks which follow.

Our paper is organized as follows:  Section 2 contains notations and preliminaries.  Here
we collect the basic notions from free and infinitesimally free probability and introduce
the quantum unitary group $A_u(N)$.  Section 3 contains some combinatorial results,
related to the ``fattening'' operation on noncrossing partitions, which will be required
in the sequel.  In Section 4 we recall the Weingarten formula from \cite{bc1} for
computing integrals over $A_u(N)$, and prove a new estimate on the entries of the
corresponding Weingarten matrix.  Section 5 contains our main results, and a discussion
of their failure for classical Haar unitaries.

\medskip
\noindent{\textbf{Acknowledgements.}} We would like to thank T. Banica, M. Neufang, and
D. Shlyakhtenko for several useful discussions.  S.C. would like to thank his thesis
advisor, D.-V. Voiculescu, for his continued guidance and support while completing
this project.
\section{Preliminaries and notations}

\begin{rmk*} \textbf{Free probability}.  We begin by recalling the basic notions of noncommutative probability spaces and distributions of random variables.
\end{rmk*}

\begin{defn}\hfill
\begin{enumerate}
 \item A \textit{noncommutative probability space} is a pair $(\mc A, \varphi)$, where $\mc A$ is a unital algebra over $\C$ and $\varphi:\mc A \to \C$ is a linear functional such that $\varphi(1) = 1$.  Elements in a noncommutative probability space will be called \textit{random variables}.
\item A W$^*$-probability space $(M,\tau)$ is a von Neumann algebra $M$ together with a faithful, normal, tracial state $\tau$.
\end{enumerate}

\end{defn}

The \textit{joint distribution} of a family $(x_i)_{i \in I}$ of random variables in a
noncommutative probability space $(\mc A,\varphi)$ is the collection of \textit{joint
moments}
\begin{equation*}
 \varphi(x_{i_1}\dotsb x_{i_k})
\end{equation*}
for $k \in \N$ and $i_1,\dotsc,i_k \in I$.  This is nicely encoded in the linear
functional $\varphi_x:\C\langle t_i| i \in I \rangle \to \C$ determined by
\begin{equation*}
 \varphi_x(p) = \varphi(p(x))
\end{equation*}
for $p \in \C \langle t_i| i \in I \rangle$, where $p(x)$ means of course to replace
$t_i$ by $x_i$ for each $i \in I$.

These definitions have natural ``operator-valued'' extensions given by replacing $\C$ by
a more general algebra of scalars, which we now recall.

\begin{defn}
An \textit{operator-valued probability space} $(\mc A, E:\mc A \to \mc B)$ consists of a
unital algebra $\mc A$, a subalgebra $1 \in \mc B \subset \mc A$, and a conditional
expectation $E:\mc A \to \mc B$, i.e., $E$ is a linear map such that $E[1] = 1$ and
\begin{equation*}
 E[b_1ab_2] = b_1E[a]b_2
\end{equation*}
for all $b_1,b_2 \in \mc B$ and $a \in \mc A$.
\end{defn}

\begin{ex}
Let $\mc B$ be a unital algebra over $\C$, and let $M_n(\mc B) = M_n(\C) \otimes \mc B$
be the algebra of $n \times n$ matrices over $\mc B$, with the natural inclusion of $\mc
B$ as $I_n \otimes \mc B$.  Let $\tr_n = n^{-1}\Tr_n$ denote the normalized trace on
$M_n(\C)$.  Then $(M_n(\mc B),\tr \otimes \mathrm{id}_{\mc B})$ is a $\mc B$-valued
probability space.  Note that if $B=(b_{ij})_{i,j=1}^n \in M_n(\mc B)$,
\begin{equation*}
 (\tr_n \otimes \mathrm{id}_{\mc B})\bigl(B\bigr) = \frac{1}{n}\sum_{i=1}^n b_{ii}.
\end{equation*}

\end{ex}

The \textit{$\mc B$-valued joint distribution} of a family $(x_i)_{i \in I}$ of random
variables in an operator-valued probability space $(\mc A,E:\mc A \to \mc B)$ is the
collection of \textit{$\mc B$-valued joint moments}
\begin{equation*}
 E[b_0x_{i_1}\dotsb x_{i_k}b_k]
\end{equation*}
for $k \in \N$, $i_1,\dotsc,i_k \in I$ and $b_0,\dotsc,b_k \in \mc B$.  Again this is
conveniently encoded in the $\mc B$-linear functional $E_x: \mc B \langle t_i|i \in I
\rangle \to \mc B$ determined by
\begin{equation*}
 E_x[p] = E[p(x)]
\end{equation*}
for $p \in \mc B \langle t_i| i \in I \rangle$, the algebra of noncommutative polynomials
with coefficients in $\mc B$.

\begin{defn}
Let $(\mc A, E:\mc A \to \mc B)$ be an operator-valued probability space, and let $(\mc
A_i)_{i \in I}$ be a collection of subalgebras $\mc B \subset \mc A_i \subset A$.  The
algebras are said to be \textit{free with amalgamation over $\mc B$}, or \textit{freely
independent with respect to $E$}, if
\begin{equation*}
 E[a_1\dotsb a_k] = 0
\end{equation*}
whenever $E[a_j] = 0$ for $1 \leq j \leq k$ and $a_j \in \mc A_{i_j}$ with $i_{j} \neq
i_{j+1}$ for $1 \leq j < k$.

We say that subsets $\Omega_i \subset \mc A$ are free with amalgamation over $\mc B$ if
the subalgebras $\mc A_i$ generated by $\mc B$ and $\Omega_i$ are freely independent with
respect to $E$.
\end{defn}

\begin{rmk}
Voiculescu first defined freeness with amalgamation, and developed its basic theory in
\cite{voi1}.  Freeness with amalgamation also has a rich combinatorial structure,
developed in \cite{sp1}, which we now recall.  For further information on the
combinatorial theory of free probability, the reader is referred to the text \cite{ns}.
\end{rmk}

\begin{defn}\hfill
\begin{enumerate}
\item A \textit{partition} $\pi$ of a set $S$ is a collection of disjoint, non-empty sets $V_1,\dotsc,V_r$ such that $V_1 \cup \dotsb \cup V_r = S$.  $V_1,\dotsc,V_r$ are called the \textit{blocks} of $\pi$, and we set $|\pi| = r$. If $s,t \in S$ are in the same block of $\pi$, we write $s \sim_\pi t$. The collection of partitions of $S$ will be denoted $\mc P(S)$, or in the case that $S =\{1,\dotsc,k\}$ by $\mc P(k)$.
\item Given $\pi,\sigma \in \mc P(S)$, we say that $\pi \leq \sigma$ if each
block of $\pi$ is contained in a block of $\sigma$.
There is a least element of $\mc P(S)$ which is larger than both $\pi$ and $\sigma$,
which we denote by $\pi \vee \sigma$.
\item If $S$ is ordered, we say that $\pi \in \mc P(S)$ is \textit{non-crossing} if whenever $V,W$ are blocks of $\pi$ and $s_1 < t_1 < s_2 < t_2$ are such that $s_1,s_2 \in V$ and $t_1,t_2 \in W$, then $V = W$.  The non-crossing partitions can also be defined recursively, a partition $\pi \in \mc P(S)$ is non-crossing if and only if it has a block $V$ which is an interval, such that $\pi \setminus V$ is a non-crossing partition of $S \setminus V$.  The set of non-crossing partitions of $S$ is denoted by $NC(S)$, or by $NC(k)$ in the case that $S = \{1,\dotsc,k\}$.
\item Given $\pi,\sigma \in NC(S)$, the join $\pi \vee \sigma$ taken in $\mc P(S)$ may not be non-crossing.  However, there is a least element of $NC(S)$ which is larger than $\pi$ and $\sigma$, which we will denote by $\pi \vee_{nc} \sigma$.  Note that in this paper we will always use $\pi \vee \sigma$ to denote the join in $\mc P(S)$, even when $\pi,\sigma$ are assumed noncrossing.
\item  Given $i_1,\dotsc,i_k$ in some index set $I$, we denote by $\ker \mathbf i$ the element of $\mc P(k)$ whose blocks are the equivalence classes of the relation
\begin{equation*}
 s \sim t \Leftrightarrow i_s= i_t.
\end{equation*}
Note that if $\pi \in \mc P(k)$, then $\pi \leq \ker \mathbf i$ is equivalent to the
condition that whenever $s$ and $t$ are in the same block of $\pi$, $i_s$ must equal
$i_t$.
\item With $0_n$ and $1_n$ we will denote the smallest and largest element, respectively,
in $\mc P(n)$; i.e., $0_n$ has $n$ blocks, each consisting of one element, and $1_n$ has
only one block. Of course, both $0_n$ and $1_n$ are in $NC(n)$.
\end{enumerate}
\end{defn}

\begin{defn} Let $(\mc A, E:\mc A \to \mc B)$ be an operator-valued probability space.
\begin{enumerate}
 \item A \textit{$\mc B$-functional} is a $n$-linear map $\rho:\mc A^n \to \mc B$ such that
\begin{equation*}
 \rho(b_0a_1b_1,a_2b_2,\dotsc,a_nb_n) = b_0\rho(a_1,b_1a_2,\dotsc,b_{n-1}a_n)b_n
\end{equation*}
for all $b_0,\dotsc,b_n \in \mc B$ and $a_1,\dotsc,a_n \in \mc A$.  Equivalently, $\rho$
is a linear map from $\mc A^{\otimes_B n}$ to $\mc B$, where the tensor product is taken
with respect to the obvious $\mc B$-$\mc B$-bimodule structure on $\mc A$.
\item For each $k \in \N$, let $\rho^{(k)}:\mc A^k \to \mc B$ be a $\mc B$-functional.  For $n \in \N$ and $\pi \in NC(n)$, we define a $\mc B$-functional $\rho^{(\pi)}:\mc A^n \to \mc B$ recursively as follows:  If $\pi = 1_n$ is the partition containing only one block, we set $\rho^{(\pi)} = \rho^{(n)}$.  Otherwise let $V = \{l+1,\dotsc,l+s\}$ be an interval of $\pi$ and define
\begin{equation*}
 \rho^{(\pi)}[a_1,\dotsc,a_n] = \rho^{(\pi \setminus V)}[a_1,\dotsc,a_l\rho^{(s)}(a_{l+1},\dotsc,a_{l+s}),a_{l+s+1},\dotsc,a_n]
\end{equation*}
for $a_1,\dotsc,a_n \in \mc A$.
\end{enumerate}
\end{defn}

\begin{ex}
Let $(\mc A,E:\mc A \to \mc B)$ be an operator-valued probability space, and for $k \in
\N$ let $\rho^{(k)}:\mc A^k \to \mc B$ be a $\mc B$-functional as above.  If
 \begin{equation*}
\pi = \{\{1,8,9,10\},\{2,7\},\{3,4,5\}, \{6\}\} \in NC(10),
\end{equation*}
\begin{equation*}
 \setlength{\unitlength}{0.6cm} \begin{picture}(9,4)\thicklines \put(0,0){\line(0,1){3}}
\put(0,0){\line(1,0){9}} \put(9,0){\line(0,1){3}} \put(8,0){\line(0,1){3}}
\put(7,0){\line(0,1){3}} \put(1,1){\line(1,0){5}} \put(1,1){\line(0,1){2}}
\put(6,1){\line(0,1){2}} \put(2,2){\line(1,0){2}} \put(2,2){\line(0,1){1}}
\put(3,2){\line(0,1){1}} \put(4,2){\line(0,1){1}} \put(5,2){\line(0,1){1}}
\put(-0.1,3.3){1} \put(0.9,3.3){2} \put(1.9,3.3){3} \put(2.9,3.3){4} \put(3.9,3.3){5}
\put(4.9,3.3){6} \put(5.9,3.3){7} \put(6.9,3.3){8} \put(7.9,3.3){9} \put(8.7,3.3){10}
\end{picture}
\end{equation*}
then the corresponding $\rho^{(\pi)}$ is given by
\begin{equation*}
 \rho^{(\pi)}[a_1,\dotsc,a_{10}] = \rho^{(4)}(a_1\cdot \rho^{(2)}(a_2\cdot \rho^{(3)}(a_3,a_4,a_5),\rho^{(1)}(a_6)\cdot a_7),a_8,a_9,a_{10}).
\end{equation*}
\end{ex}

\begin{rmk}
Note that if $\mc B$ is commutative, then
\begin{equation*}
 \rho^{(\pi)}[a_1,\dotsc,a_{n}] = \prod_{V \in \pi} \rho(V)[a_1,\dotsc,a_n],
\end{equation*}
where if $V = (i_1 < \dotsb < i_s)$ is a block of $\pi$, we set
\begin{equation*}
 \rho(V)[a_1,\dotsc,a_n] = \rho^{(s)}[a_{i_1},\dotsc,a_{i_s}].
\end{equation*}
\end{rmk}

\begin{defn}
Let $(\mc A, E:\mc A \to \mc B)$ be an operator-valued probability space.
\begin{enumerate}
\item For $k \in \N$, define the \textit{$B$-valued moment functions} $E^{(k)}:\mc A^k \to \mc B$ by
\begin{equation*}
 E^{(k)}[a_1,\dotsc,a_k] = E[a_1\dotsb a_k].
\end{equation*}

\item The \textit{operator-valued free cumulants} $\kappa_E^{(k)}:\mc A^k \to \mc B$ are the $\mc B$-functionals defined by the \textit{moment-cumulant formula}:
\begin{equation*}
 E[a_1\dotsb a_n] = \sum_{\pi \in NC(n)} \kappa_E^{(\pi)}[a_1,\dotsc,a_n]
\end{equation*}
for $n \in \N$ and $a_1,\dotsc,a_n \in \mc A$.
\end{enumerate}

\end{defn}

Note that the right hand side of the moment-cumulant formula above is equal to
$\kappa_E^{(n)}(a_1,\dotsc,a_n)$ plus products of lower order terms and hence can be
solved recursively for $\kappa_E^{(n)}$.  In fact the cumulant functions can be solved
from the moment functions by the following formula from \cite{sp1}: for each $n \in \N$,
$\pi \in NC(n)$ and $a_1,\dotsc,a_n \in \mc A$,
\begin{equation*}
 \kappa_E^{(\pi)}[a_1,\dotsc,a_n] = \sum_{\substack{\sigma \in NC(n)\\ \sigma \leq \pi}} \mu_n(\sigma,\pi)E^{(\sigma)}[a_1,\dotsc,a_n],
\end{equation*}
where $\mu_n$ is the \textit{M\"{o}bius function} on the partially ordered set $NC(n)$.
The M\"{o}bius function is given by the formula
\begin{equation*}
 \mu_n(\sigma,\pi) = \begin{cases} 0, & \sigma \not\leq \pi\\ 1, & \sigma = \pi\\ -1 + \sum_{l \geq 1} (-1)^{l+1}\#\{(\nu_1,\dotsc,\nu_l) \in NC(n)^l: \sigma < \nu_1 < \dotsb < \nu_l < \pi\}, & \sigma < \pi \end{cases}.
\end{equation*}

The key relation between operator-valued free cumulants and freeness with amalgamation is
that freeness can be characterized in terms of the ``vanishing of mixed cumulants''.
\begin{thm}[\cite{sp1}]
Let $(\mc A, E:\mc A \to \mc B)$ be an operator-valued probability space, and let $(\mc
A_i)_{i \in I}$ be a collection of subalgebras $\mc B \subset \mc A_i \subset \mc A$.
Then the family $(\mc A_i)_{i \in I}$ is free with amalgamation over $\mc B$ if and only
if
\begin{equation*}
 \kappa_E^{(\pi)}[a_1,\dotsc,a_n] = 0
\end{equation*}
whenever $a_j \in \mc A_{i_j}$ for $1 \leq j \leq n$ and $\pi \in NC(n)$ is such that
$\pi \not\leq \ker \mathbf i$.
\end{thm}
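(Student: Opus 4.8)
The plan is to prove the characterization of freeness with amalgamation via vanishing of mixed free cumulants, following Speicher's original argument \cite{sp1}. This is an ``if and only if'' statement, so I would handle the two directions separately.

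First, the forward direction (freeness $\Rightarrow$ vanishing of mixed cumulants). Assume $(\mc A_i)_{i\in I}$ is free with amalgamation over $\mc B$. I would proceed by induction on $n$, the number of arguments. The key technical tool is the formula expressing centered products: given $a_j \in \mc A_{i_j}$ with $i_j \ne i_{j+1}$, one can write each $a_j = (a_j - E[a_j]) + E[a_j]$ and expand, using freeness to kill the fully-centered term, obtaining a recursion that expresses $E[a_1\cdots a_n]$ in terms of lower-order moments. Comparing this with the moment-cumulant formula $E[a_1\cdots a_n] = \sum_{\pi \in NC(n)} \kappa_E^{(\pi)}[a_1,\dotsc,a_n]$ and using the inductive hypothesis that all mixed cumulants of order $< n$ vanish, one isolates $\kappa_E^{(n)}(a_1,\dotsc,a_n)$ and shows it must be zero when not all $i_j$ agree. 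The general case (where $\pi \not\le \ker\mathbf i$ but $\pi$ need not be $1_n$) then follows from the case $\pi = 1_n$ applied blockwise, together with the observation that an interval block $V$ of $\pi$ with $\pi\setminus V$ still violating $\ker\mathbf i$ (or $V$ itself mixed) forces a vanishing factor via the defining recursion of $\kappa_E^{(\pi)}$; one must be slightly careful that when $V$ is a non-mixed interval block one still gets propagation of the vanishing from the complementary partition.

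Second, the converse direction (vanishing of mixed cumulants $\Rightarrow$ freeness). Here I would take $a_1,\dotsc,a_k$ with $a_j \in \mc A_{i_j}$, $i_j \ne i_{j+1}$, and $E[a_j] = \kappa_E^{(1)}(a_j) = 0$ for each $j$, and show $E[a_1\cdots a_k] = 0$. Expanding via the moment-cumulant formula, $E[a_1\cdots a_k] = \sum_{\pi \in NC(k)} \kappa_E^{(\pi)}[a_1,\dotsc,a_k]$. By hypothesis, every term with $\pi \not\le \ker\mathbf i$ vanishes, so only $\pi \le \ker\mathbf i$ survive. But since consecutive indices differ, $\ker\mathbf i$ has no two adjacent elements in the same block; a non-crossing partition $\pi \le \ker\mathbf i$ therefore cannot have any interval block of size $\ge 2$, and in fact the only non-crossing $\pi$ all of whose blocks avoid adjacent pairs and which has an interval block must have a singleton interval block. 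Peeling off that singleton block $V = \{j\}$ contributes a factor $\kappa_E^{(1)}(a_j) = E[a_j] = 0$ through the recursive definition of $\kappa_E^{(\pi)}$, so every surviving term is zero. Hence $E[a_1\cdots a_k] = 0$, which is exactly freeness.

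**The main obstacle** I expect is bookkeeping in the forward direction: correctly handling the $\mc B$-bimodule structure when centering (the ``center'' $E[a_j]$ lies in $\mc B$, so absorbing it into a neighbouring factor changes which algebra that factor lives in, but $\mc A_i \supset \mc B$ so this is harmless — one just has to track it), and reducing the general $\pi \not\le \ker\mathbf i$ case to $\pi = 1_n$ cleanly via the recursive definition of $\rho^{(\pi)}$. The combinatorial lemma that a non-crossing partition refining a partition with no adjacent pairs in a common block must have a singleton interval block is elementary but is the crux of the converse and should be stated explicitly. Since the statement is quoted from \cite{sp1}, I would in practice cite it, but the above is the self-contained argument.
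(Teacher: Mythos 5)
Your converse direction is fine and is the standard argument: for centered $a_1,\dotsc,a_k$ with $i_j\neq i_{j+1}$, only $\pi\leq\ker\mathbf i$ survive in the moment--cumulant expansion, every noncrossing partition has an interval block, no block of such a $\pi$ may contain two adjacent points, hence some interval block is a singleton and contributes the factor $\kappa_E^{(1)}(a_j)=E[a_j]=0$, which kills the nested expression. (For the record, the paper does not prove this theorem at all; it is quoted from \cite{sp1}, so your argument is to be measured against the proof there.)

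The forward direction, however, has a genuine gap. You only treat tuples with $i_j\neq i_{j+1}$, and your centering step really needs this: freeness gives $E\bigl[(a_1-E[a_1])\dotsb(a_n-E[a_n])\bigr]=0$ only when consecutive indices differ. But the statement asserts $\kappa_E^{(\pi)}[a_1,\dotsc,a_n]=0$ whenever $\pi\not\leq\ker\mathbf i$, and already for $\pi=1_n$ this includes non-alternating mixed tuples, e.g.\ $\kappa_E^{(3)}[a_1,a_2,b]$ with $a_1,a_2\in\mc A_1$, $b\in\mc A_2$, where $\ker\mathbf i=\{\{1,2\},\{3\}\}$ and $1_3\not\leq\ker\mathbf i$. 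Your blockwise reduction hits the same wall: a mixed block $V$ of $\pi$ carries the index word $\mathbf i|_V$, which need not be alternating, so the ``case $\pi=1_n$'' you invoke there is not the case you proved. And for a non-alternating mixed tuple the inductive bookkeeping does not close on its own: after centering, $E[a_1\dotsb a_n]$ is in general nonzero and the partitions $\pi\leq\ker\mathbf i$ without singletons genuinely contribute, so one must actually prove that the moment equals exactly that partial sum -- which is the substance of the theorem, not a formality. The standard way to fill this hole (and essentially the route of \cite{sp1}; see also the scalar treatment in \cite{ns}) is the formula for free cumulants whose entries are products, used to reduce a mixed cumulant having adjacent arguments from the same $\mc A_i$ to alternating mixed cumulants of lower order; some ingredient of this kind is missing from your outline, and without it the forward direction does not go through.
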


\begin{rmk*} \textbf{Infinitesimal free probability}.  We will now introduce the notions of operator-valued infinitesimal probability spaces and infinitesimal freeness.  This is a straightforward generalization of the framework of \cite{bsh}, and we refer the reader to that paper for further discussion of infinitesimal freeness and its relation to the type $B$ free independence of Biane, Nica and Goodman \cite{bgn}.  See \cite{fen} for a more combinatorial treatment of infinitesimal freeness.
\end{rmk*}

\begin{defn}\label{infdefn}\hfill
\begin{enumerate}
\item If $\mc B$ is a unital algebra, a \textit{$\mc B$-valued infinitesimal probability space} is a triple $(\mc A, E, E')$ where $\mc A$ is a unital algebra which contains $\mc B$ as a unital subalgebra and $E,E'$ are $\mc B$-linear maps from $\mc A$ to $\mc B$ such that $E[1]= 1$ and $E'[1] = 0$.
\item Let $(\mc A,E,E')$ be a $\mc B$-valued infinitesimal probability space, and
let $(\mc A_i)_{i \in I}$ be a collection of subalgebras $\mc B \subset \mc A_i \subset
\mc A$.  The algebras are said to be \textit{infinitesimally free with amalgamation over
$\mc B$}, or \textit{infinitesimally free with respect to $(E,E')$}, if
\begin{enumerate}
\item $(\mc A_i)_{i \in I}$ are freely independent with respect to $E$.
\item For any $a_1,\dotsc,a_k$ so that $a_{j} \in \mc A_{i_j}$ for $1 \leq j \leq k$
with $i_j \neq i_{j+1}$, we have
\begin{equation*}
 E'\Bigl[\bigl(a_1-E[a_1]\bigr)\dotsb \bigl(a_k-E[a_k]\bigr)\Bigr] =
 \sum_{j=1}^k E\Bigl[\bigl(a_1-E[a_1]\bigr)\dotsb \bigl(E'[a_j]\bigr)\dotsb
 \bigl(a_k-E[a_k]\bigr)\Bigr].
\end{equation*}

\end{enumerate}

We say that subsets $(\Omega_i)_{i \in I}$ are infinitesimally free with amalgamation
over $\mc B$ if the subalgebras $\mc A_i$ generated by $\mc B$ and $\Omega_i$ are
infinitesimally free with respect to $(E,E')$.
\end{enumerate}

\end{defn}

\begin{rmk}
The motivating example is given by a family $(A_i(s))_{i \in I}$ of $\mc B$-valued random
variables for $s > 0$ which are free ``up to $o(s)$'' as $s \to 0$.  This is made precise
in the next proposition. Note that there we make the notion ``free up to $o(s)$'' precise
by comparing the family $(A_i(s))_{i \in I}$ with a family $(a_i(s))_{i \in I}$ which is
free for all $s$. Infinitesimal freeness will then occur at $s=0$ (both for the $A_i$ and
the $a_i$). Since $0$ is not necessarily in $K$, we define the states $E$ and $E'$ on the
free algebra $\mc A:=\mc B \langle A_i| i \in I\rangle$ generated by non-commuting
indeterminates $A_i\hat=A_i(0)\hat = a_i(0)$.
\end{rmk}

\begin{prop}\label{infexample}
Let $\mc B$ be a unital C$^*$-algebra and $K$ a subset of $\R$ for which 0 is an
accumulation point.  Suppose that for each $s\in K$ we have a $\mc B$-valued probability
space $(\mc A(s),E_s:\mc A(s) \to \mc B)$ where $\mc A(s)$ is a unital C$^*$-algebra
which contains $\mc B$ as a unital subalgebra and $E_s$ is contractive. Furthermore,
suppose that, for each $s \in K$, there are variables $(A_i(s))_{i \in I}$ belonging to
$\mc A(s)$ such that the following hold:
\begin{enumerate}
 \item There are $\mc B$-linear maps $E,E':\mc B \langle A_i| i \in I \rangle \to \mc B$
 such that
\begin{align*}
 E[p(A)] &= \lim_{s \to 0} E_s\bigl[p(A(s))\bigr]\\
E'[p(A)] &= \lim_{s \to 0} \frac{1}{s}\Bigl\{E_s[p(A(s))]-E[p]\Bigr\}
\end{align*}
for $p \in \mc B \langle t_i | i \in I \rangle$, where the limits hold in norm.
\item For each $i \in I$,
\begin{equation*}
 \limsup_{s \to 0} \|A_i(s)\| < \infty.
\end{equation*}

\end{enumerate}

Let $I = \bigcup_{j \in J} I_j$ be a partition of $I$.  For $s \in K$, let $(a_i(s))_{i
\in I}$ be a family in some $\mc B$-valued probability space $(\mc C, F:\mc C \to \mc B)$
and suppose that
\begin{enumerate}
\item For any $j \in J$, $p \in \mc B \langle t_i| i \in I_j \rangle$, and $s\in K$,
\begin{equation*}
 E_s[p(A(s))] = F[p(a(s))].
\end{equation*}
\item The sets $(\{a_i(s)| s\in K, i \in I_j\})_{j \in J}$ are free with respect to $F$.
\item
For any $p \in \mc B \langle t_i| i \in I \rangle$ we have
\begin{equation*}
 \bigl\|E_s[p(A(s))] - F[p(a(s))] \bigr\| = o(s) \; \; \; \; (\text{as $s \to 0$}).
\end{equation*}
\end{enumerate}
Then the sets $(\{A_i| i \in I_j\})_{j \in J}\subset \mc B \langle A_i| i \in I \rangle$
are infinitesimally free with respect to $(E,E')$.

\end{prop}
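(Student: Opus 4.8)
The plan is to transfer everything to the ``free side'': the family $(a_i(s))_{i\in I}$ is genuinely free with respect to $F$ for \emph{every} $s$, not just in the limit, and conditions (1) and (3) force its limiting and first-order moments to coincide with the $E$- and $E'$-moments of $(A_i)$. I would then verify the two clauses of Definition~\ref{infdefn} by expanding an alternating product over the free family and keeping track of which terms survive after dividing by $s$. First I would record that for every $p\in\mc B\langle t_i|i\in I\rangle$ one has, in norm,
\[
\lim_{s\to 0}F[p(a(s))] = E[p],\qquad \lim_{s\to 0}\tfrac1s\bigl\{F[p(a(s))]-E[p]\bigr\} = E'[p],
\]
and moreover $\|F[p(a(s))]\|\le\|E_s[p(A(s))]\|+o(1)\le\|p(A(s))\|+o(1)$ stays bounded as $s\to 0$; all three facts come from the $o(s)$-hypothesis, the contractivity of $E_s$, the defining limits for $E,E'$, and the bound $\limsup_s\|A_i(s)\|<\infty$. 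Equivalently, the $\mc B$-linear functionals $G_s:=F[\,\cdot\,(a(s))]$ on $\mc B\langle A_i|i\in I\rangle$ converge pointwise in norm to $E$ with first-order correction $E'$, are uniformly bounded on each finite-dimensional subspace, and for every $s$ realize $(\mc A_j)_{j\in J}$ as free over $\mc B$ (freeness being a property of the $\mc B$-valued distribution alone). Every limit interchange below then reduces to the elementary observation that $\phi_s[v_s]\to\phi[v]$ whenever $\phi_s\to\phi$ pointwise with a local uniform bound and $v_s\to v$ inside a fixed finite-dimensional subspace.

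\textbf{Freeness over $\mc B$.} To get clause (a), I would take $a_l=p_l(A)\in\mc A_{m_l}$ with $E[a_l]=0$ and $m_l\ne m_{l+1}$, set $X:=a_1\dotsb a_k$, write $p_l(a(s))=\bigl(p_l(a(s))-F[p_l(a(s))]\bigr)+F[p_l(a(s))]$ in every slot, and expand: the term with all factors $F$-centered vanishes identically by freeness of $(a_i(s))$, while each remaining term contains a scalar factor $F[p_l(a(s))]\to 0$ and hence tends to $0$. Thus $E[X]=\lim_s F[X(a(s))]=0$, i.e.\ $(\mc A_j)_{j\in J}$ is free with respect to $E$. (Alternatively one passes to the limit in the vanishing of mixed $F$-cumulants, which are universal $\mc B$-multilinear expressions in the moments.)

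\textbf{The infinitesimal relation.} For clause (b), take $a_l=p_l(A)\in\mc A_{m_l}$ with $m_l\ne m_{l+1}$, put $\mathring a_l:=a_l-E[a_l]$ and $X:=\mathring a_1\dotsb\mathring a_k$; I must show $E'[X]=\sum_{l=1}^k E[\mathring a_1\dotsb E'[a_l]\dotsb\mathring a_k]$. Since $E[X]=0$ by the previous step and $\|E_s[X(A(s))]-F[X(a(s))]\|=o(s)$, the left side equals $\lim_{s\to 0}\tfrac1s F[X(a(s))]$. I would decompose $p_l(a(s))-E[a_l]=c_l(s)+\gamma_l(s)$ with $c_l(s):=p_l(a(s))-F[p_l(a(s))]$ ($F$-centered, lying in the block-$m_l$ subalgebra) and $\gamma_l(s):=F[p_l(a(s))]-E[a_l]\in\mc B$; since $p_l$ involves only the variables of block $I_{m_l}$, where $F[p_l(a(s))]=E_s[p_l(A(s))]$, we get $\gamma_l(s)=E_s[p_l(A(s))]-E[a_l]$, so $\gamma_l(s)\to 0$ and $\gamma_l(s)/s\to E'[a_l]$. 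Expanding $X(a(s))=\prod_{l=1}^k(c_l(s)+\gamma_l(s))$ into $2^k$ terms indexed by the set $S\subseteq\{1,\dotsc,k\}$ of slots carrying the scalar and applying $F$, there will be three kinds of contribution. For $S=\emptyset$ the term is $F[c_1(s)\dotsb c_k(s)]=0$ identically (centered factors in alternating blocks, plus freeness of $(a_i(s))$). For $S=\{l\}$ the term equals $s\cdot F[c_1(s)\dotsb(\gamma_l(s)/s)\dotsb c_k(s)]$ since $s\in\C$ is central, and by the bridge its $F$-value converges to $E[\mathring a_1\dotsb E'[a_l]\dotsb\mathring a_k]$, contributing exactly the $l$-th summand of the right side to $\lim_s\tfrac1s F[X(a(s))]$. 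For $|S|\ge 2$ the term is $s^{|S|}$ times an $F$-value that stays bounded as $s\to 0$, contributing $O(s^{|S|-1})\to 0$. Summing, $E'[X]=\sum_{l=1}^k E[\mathring a_1\dotsb E'[a_l]\dotsb\mathring a_k]$, as required.

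\textbf{Expected obstacle.} No deep structural input is needed; the difficulty is bookkeeping. Since $\mc C$ need not be a C$^*$-algebra and $F$ need not be contractive, the boundedness used in the last step to kill the $O(s^{|S|})$ terms after dividing by $s$ cannot be read off inside $\mc C$ and must be imported from the matrix side via the $o(s)$-hypothesis, the contractivity of $E_s$, and $\limsup_s\|A_i(s)\|<\infty$; this is the one place the hypotheses are used in full strength. It is also tempting to evaluate $F$ explicitly on the single-scalar terms, but when the scalar slot $l$ lies between two factors from a common block this sets off a cascade of re-centerings — a detour one avoids entirely, since only $\tfrac1s$ times such a term in the limit is needed, and that limit is just the $E$-moment of the corresponding \emph{unexpanded} product.
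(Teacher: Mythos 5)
Your expansion is the same one the paper uses; the trouble is where you run it. You carry out all cancellations and limits inside $(\mc C,F)$, and $\mc C$ carries no norm and $F$ no contractivity, so each of the three limit interchanges you need --- that the cross terms in the freeness step tend to $0$ because one scalar factor $F[p_l(a(s))]\to 0$, that the $|S|\ge 2$ terms are $s^{|S|}$ times an $F$-value that ``stays bounded'', and that the $|S|=1$ terms converge ``by the bridge'' --- requires a bound of the shape $\|F[c_1(s)\dotsb b\dotsb c_k(s)]\|\le C\|b\|$ uniformly in $s$, and nothing you establish provides it. The ``elementary observation'' you invoke does not apply: the elements fed to $G_s$ contain the centering constants $F[p_j(a(s))]$ and the scalars $\gamma_l(s)/s$, which move inside $\mc B$, so for infinite-dimensional $\mc B$ they do not stay in a fixed finite-dimensional subspace of $\mc B\langle A_i\mid i\in I\rangle$; and your bridge, like hypothesis (3), is pointwise in a fixed polynomial $p$, so it cannot be applied to these $s$-dependent elements without a uniformity argument you do not give. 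Nor can you switch to the matrix side term by term: condition (3) again compares only fixed polynomials, and on that side the all-centered term is no longer exactly zero, since the $A_i(s)$ are not free with respect to $E_s$.

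The paper sidesteps all of this by observing that $E$ and $E'$ depend only on the distributions up to $o(s)$, so one may assume from the outset that the $A_i(s)$ themselves are free with respect to $E_s$; the identical centering expansion is then performed inside $E_s$, where contractivity and $\limsup_{s\to 0}\|A_i(s)\|<\infty$ supply exactly the uniform bounds your version lacks, while freeness kills the all-centered term. If you prefer to stay in $(\mc C,F)$, the boundedness can in fact be imported, but only through the device you explicitly decline: use freeness of the $a_i(s)$ to reduce every mixed $F$-value occurring in your expansion (including those with a $\mc B$-element inserted in one slot) to nested single-block $F$-moments via the recentering cascade; condition (1) is an identity valid for every polynomial at every $s$, so each single-block $F$-moment equals the corresponding $E_s$-moment, and contractivity of $E_s$ together with the norm bounds on the $A_i(s)$ then gives the uniform control and the convergence. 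Either repair is the real substance of the proof, so as written your argument has a genuine gap at its key analytic step.
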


\begin{proof}
Since $E,E'$ only depend on the distribution of the variables $A_i(s)$ up to first order,
it clearly suffices to assume that the sets $(\{A_i(s): i \in I_j\})_{j \in J}$ are
freely independent with respect to $E_s$ for all $s \in K$.  It is then clear that the
sets $(\{A_i: i \in I_j\})_{j \in J}\subset \mc B \langle A_i| i \in I \rangle$ are free
with respect to $E$, so it suffices to show that $E'$ satisfies condition (ii) of
Definition \ref{infdefn}.  Let $j_1 \neq \dotsb \neq j_k$ in $J$ and $p_l \in \mc B
\langle t_i| i \in I_{j_l} \rangle$ for $1 \leq l \leq k$, and consider
\begin{align*}
E'\Bigl[\bigl(p_1(A)-E[p_1(A)]\bigr)&\dotsb \bigl(p_k(A)-E[p_k(A)]\bigr)\Bigr]
 \\
& = \lim_{s \to 0} \frac{1}{s}
 \Bigl\{E_s\Bigl[\bigl(p_1(A(s))-E[p_1(A)]\bigr)\dotsb \bigl(p_k(A(s))-E[p_k(A)]\bigr)\Bigr]\\
 &\qquad\qquad\qquad\qquad\qquad\qquad
 - E\Bigl[\bigl(p_1(A)-E[p_1(A)]\bigr)\dotsb \bigl(p_k(A)-E[p_k(A)]\bigr)\Bigr]\Bigr\}\\
&= \lim_{s \to 0} \frac{1}{s} \Bigl\{E_s\Bigl[\bigl(p_1(A(s))-E[p_1(A)]\bigr)\dotsb
\bigl(p_k(A(s))-E[p_k(A)]\bigr)\Bigr]\Bigr\},
\end{align*}
where we have used freeness with respect to $E$.  Rewrite this expression as
\begin{multline*}
 \lim_{s \to 0} \frac{1}{s} \Bigl\{E_s\Bigl[ \bigl((p_1(A(s))-E_s[p_1(A(s))])+
 (E_s[p_1(A(s))]-E[p_1(A)])\bigr)\\
\dotsb \bigl((p_k(A(s))-E_s[p_k(A(s))])+(E_s[p_k(A(s))]-E[p_k(A)])\bigr)\Bigr]\Bigr\},
\end{multline*}
and consider the terms which appear in the expansion.  First observe that
$\|E_s[p_l(A(s))]-E[p_l(A)]\|$ is $O(s)$ for $1 \leq l \leq k$.  By the boundedness
assumption on the norms of $A_i(s)$, and the contractivity of $E_s$, it follows that
those terms involving more than one expression $(E_s[p_l(A(s))]-E[p_l(A)])$ vanish in the
limit.

The term involving none of these expressions is
\begin{equation*}
 E_s\Bigl[\bigl(p_1(A(s))-E_s[p_1(A(s))]\bigr)\dotsb \bigl(p_k(A(s))-E_s[p_k(A(s))]\bigr)\Bigr]
\end{equation*}
which is zero by freeness.

So we are left to consider only the terms involving one such expression, which gives
\begin{equation*}
 \sum_{l=1}^k \lim_{s \to 0} \frac{1}{s}
 \Bigl\{E_s\Bigl[ \bigl(p_1(A(s)) - E_s[p_1(A(s))]\bigr)
 \dotsb \bigl(E_s[p_l(A(s))]-E[p_l(A)]\bigr)\dotsb \bigl(p_k(A(s))-E_s[p_k(A(s))]\bigr)
 \Bigr]\Bigr\},
\end{equation*}
which again by invoking the boundedness assumptions on $A_i(s)$ and contractivity of
$E_s$, converges to
\begin{equation*}
 \sum_{l=1}^k E\Bigl[\bigl(p_1(A)-E[p_1(A)]\bigr)\dotsb E'[p_l(A)]\dotsb
 \bigl(p_k(A)-E[p_k]\bigr)\Bigr]
\end{equation*}
as desired.
\end{proof}

\begin{rmk*}\textbf{Quantum unitary group.}  We now recall the definition of the quantum unitary group from \cite{wang1}, which is a compact quantum group in the sense of Woronowicz \cite{wor1}.
\end{rmk*}

\begin{defn}
$A_u(n)$ is the universal C$^*$-algebra generated by $\{U_{ij}: 1 \leq i,j \leq n\}$ such
that the matrix $U = (U_{ij}) \in M_n(A_u(n))$ is unitary.  $A_u(n)$ is a C$^*$-Hopf
algebra with comultiplication, counit and antipode given by
\begin{align*}
 \Delta(U_{ij}) &= \sum_{k=1}^n U_{ik} \otimes U_{kj}\\
\epsilon(U_{ij}) &= \delta_{ij}\\
S(U_{ij}) &= U_{ji}^*.
\end{align*}
The existence of these maps is given by the the universal property of $A_u(n)$.
\end{defn}

\begin{rmk}
It is often useful to consider the heuristic formula ``$A_u(n) = C(U_n^+)$'', where
$U_n^+$ is the \textit{free unitary group}.  However in this paper we will stay with the
Hopf algebra notation, which is more convenient for our purposes.
\end{rmk}

\begin{rmk}
A fundamental result of Woronowicz \cite{wor1} guarantees the existence of a unique
\textit{Haar state} $\psi_n:A_u(n) \to \C$ which is left and right invariant in the sense
that
\begin{equation*}
 (\psi_n \otimes \mathrm{id}) \Delta(a) = \psi_n(a) 1_{A_u(n)} = (\mathrm{id} \otimes \psi_n) \Delta(a)
\end{equation*}
for $a \in A_u(n)$.  We will discuss this further in Section 4.
\end{rmk}

Wang also introduced the free product operation on compact quantum groups in
\cite{wang1}.  We will use $A_u(n)^{*\infty}$ to denote the free product of countably
many copies of $A_u(n)$.  The reader is referred to \cite{wang1} for details, the only
properties that we will need are that
\begin{enumerate}
 \item $A_u(n)^{*\infty}$ is generated (as a C$^*$-algebra) by elements $\{U(l)_{ij}: l \in \N, 1 \leq i,j \leq n\}$, such that $U(l) \in M_n(A_u(n)^{*\infty})$ is unitary.
\item The sets $(\{U(l)_{ij}: 1 \leq i,j \leq n\})_{l \in \N}$ are freely independent with respect to the Haar state $\psi_n^{*\infty}$ on $A_u(n)^{*\infty}$, and for each $l \in \N$, $(U(l)_{ij})$ has the same joint distribution in $(A_u(n)^{*\infty},\psi_n^{*\infty})$ as $(U_{ij})$ in $(A_u(n),\psi_n)$.
\end{enumerate}

\section{Some combinatorial results}

In this section we introduce several operations on partitions and prove some basic
results which will be required throughout the remainder of the paper.

\begin{notation} \hfill
\begin{enumerate}
\item Given $\pi \in NC(m)$, we define $\widetilde \pi \in NC_2(2m)$ as follows:  For each block $V = (i_1,\dotsc,i_s)$ of $\pi$, we add to $\widetilde \pi$ the pairings $(2i_1-1,2i_s), (2i_1,2i_2-1),\dotsc,(2i_{s-1},2i_s-1)$.
\item Given $\pi \in NC(m)$, we define $\hat \pi \in NC(2m)$ by
partitioning the $m$-pairs $(1,2),(3,4),\dotsc,(2m-1,2m)$ according to $\pi$.
\item  Given $\pi,\sigma \in \mc P(m)$, we define $\pi \wr \sigma \in \mc P(2m)$ to be the partition obtained by partitioning the odd numbers $\{1,3,\dotsc,2m-1\}$ according to $\pi$ and the even numbers $\{2,4,\dotsc,2m\}$ according to $\sigma$.
\item Given $\pi \in \mc P(m)$, let $\overleftarrow{\pi}$ denote the partition obtained by shifting $k$ to $k-1$ for $1 < k \leq m$ and sending $1$ to $m$, i.e.,
\begin{equation*}
s \sim_{\overleftarrow{\pi}} t \qquad\Longleftrightarrow \qquad(s + 1) \sim_\pi (t+1),
\end{equation*}
where we count modulo $m$ on the right hand side. Likewise we let $\overrightarrow{\pi}$
denote the partition obtained by shifting $k$ to $k+1$ for $1 \leq k < m$ and sending $m$
to $1$.
\end{enumerate}

\end{notation}

\begin{rmk}  The map $\pi \mapsto \widetilde \pi$ is easily seen to be a bijection,
and corresponds to the well-known ``fattening'' operation. The following example shows
this for $\pi=\{\{1,4,5\},\{2,3\},\{6\}\}$.

\begin{center}
\begin{pspicture}(0,0)(6,2)
{\psset{xunit=.3cm,yunit=.6cm,linewidth=.5pt}
\psline(0,2)(0,0)\psline(0,0)(12,0) \psline(3,1.2)(6,1.2)
 \psline(12,2)(12,0)
 \psline(3,2)(3,1.2) \psline(6,2)(6,1.2)
\psline(9,2)(9,0)\psline(15,0)(15,2)\uput[u](0,2){1}\uput[u](3,2){2}\uput[u](6,2){3}
\uput[u](9,2){4}\uput[u](12,2){5} \uput[u](15,2){6} \uput[u](-2,1){$\pi=$}}
\end{pspicture}
\qquad
\begin{pspicture}(0,0)(4.8,2)
{\psset{xunit=.3cm,yunit=.6cm,linewidth=.5pt} \psline(0,2)(0,0)\psline(0,0)(12.7,0)
\psline(3,1.2)(6.7,1.2)\psline(3.7,2)(3.7,1.5)
\psline(9.7,2)(9.7,.3)\psline(9.7,.3)(12,.3) \psline(12,2)(12,0.3)
 \psline(.7,2)(.7,.3)\psline(.7,.3)(9,.3)\psline(6.7,1.2)(6.7,2)
\psline(3.7,1.5)(6,1.5) \psline(3,2)(3,1.2) \psline(6,2)(6,1.5)
\psline(9,2)(9,0.3)\psline(15,0)(15,2) \psline(15.7,0)(15.7,2)\psline(15.7,0)(15,0)
\psline(12.7,0)(12.7,2) \uput[u](0,2){1} \uput[u](.7,2){$\overline
1$}\uput[u](3,2){2}\uput[u](3.7,2){$\overline 2$}\uput[u](6,2){3}\uput[u](6.7,2){$\overline 3$}
\uput[u](9,2){4}\uput[u](9.7,2){$\overline
4$}\uput[u](12,2){5}\uput[u](12.7,2){$\overline
5$}\uput[u](15,2){6}\uput[u](15.7,2){$\overline 6$} \uput[u](-2,1){$\widetilde\pi=$}}
\end{pspicture}
\end{center}

There is a simple description of the inverse, it sends $\sigma \in NC_2(2m)$ to the
partition $\tau \in NC(m)$ such that $\sigma \vee \hat 0_m = \hat \tau$, where $\hat 0_m
= \{\{1,2\},\dotsc,\{2m-1,2m\}\}$. Thus we have for $\pi\in NC(m)$
$$\hat \pi=\widetilde \pi\vee \hat 0_m.$$
Note also that $\hat 0_m=\widetilde 0_m$ and that $\hat 1_m=1_{2m}$.

\end{rmk}

\begin{defn}
Let $\pi \in NC(m)$.  The \textit{Kreweras complement} $K(\pi)$ is the largest partition
in $NC(m)$ such that $\pi \wr K(\pi) \in NC(2m)$.
\end{defn}

\begin{ex}
If $\pi = \{\{1,5\},\{2,3,4\},\{6,8\},\{7\}\}$ then $K(\pi) =
\{\{1,4\},\{2\},\{3\},\{5,8\},\{6,7\}\}$, which can be seen follows:
\begin{center}
\begin{pspicture}(0,0)(8,2.5)
{\psset{xunit=.5cm,yunit=1cm,linewidth=.5pt} \psline(0,2)(0,0)\psline(0,0)(8,0)\psline(8,0)(8,2)
\psline(2,2)(2,1)\psline(2,1)(6,1)\psline(6,1)(6,2)\psline(4,1)(4,2)
\psline(10,2)(10,.5)\psline(10,.5)(14,.5)\psline(14,.5)(14,2) \psline(12,2)(12,1.5)}
{\psset{xunit=.5cm,yunit=1cm,linewidth=1pt} \psline(1,2)(1,.5)\psline(1,.5)(7,.5)\psline(7,.5)(7,2)
\psline(3,2)(3,1.5) \psline(5,2)(5,1.5)
\psline(9,2)(9,0)\psline(9,0)(15,0)\psline(15,0)(15,2)
\psline(11,2)(11,1)\psline(11,1)(13,1)\psline(13,1)(13,2) \uput[u](0,2){1}
\uput[u](1,2){$\overline 1$}\uput[u](2,2){2}\uput[u](3,2){$\overline
2$}\uput[u](4,2){3}\uput[u](5,2){$\overline 3$} \uput[u](6,2){4}\uput[u](7,2){$\overline
4$}\uput[u](8,2){5}\uput[u](9,2){$\overline 5$}\uput[u](10,2){6}\uput[u](11,2){$\overline
6$} \uput[u](12,2){7}\uput[u](13,2){$\overline
7$}\uput[u](14,2){8}\uput[u](15,2){$\overline 8$}}
\end{pspicture}
\end{center}

\end{ex}

The following lemma provides the relationship between the Kreweras complement on $NC(m)$
and the map $\pi \mapsto \widetilde \pi$.
\begin{lem}\label{kreweras}
If $\pi \in NC(m)$, then
\begin{equation*}
 \widetilde{K(\pi)} = \overleftarrow{\widetilde \pi}.
\end{equation*}
\end{lem}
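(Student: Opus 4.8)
The plan is to verify the identity $\widetilde{K(\pi)} = \overleftarrow{\widetilde\pi}$ by tracking a single block of $\pi$ through both sides and comparing the resulting pairings on $\{1,\dots,2m\}$. Since $\pi \mapsto \widetilde\pi$ is a bijection from $NC(m)$ onto $NC_2(2m)$, both sides are genuine pair partitions; I only need to check that a pair belongs to the left-hand side if and only if it belongs to the right-hand side.

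First I would recall the explicit recipe for $\widetilde\pi$: if $V = (i_1 < i_2 < \dots < i_s)$ is a block of $\pi$, it contributes the pairings $(2i_1 - 1, 2i_s)$ and $(2i_j, 2i_{j+1}-1)$ for $1 \leq j < s$. Geometrically, $\widetilde\pi$ "surrounds" each point $i$ of $\{1,\dots,m\}$ by the pair of new points $\{2i-1, 2i\}$, and the pairings of $\widetilde\pi$ connect the "right side" $2i_j$ of one point of the block to the "left side" $2i_{j+1}-1$ of the next, cyclically within the block. The cyclic shift $\overleftarrow{\;\cdot\;}$ on $NC(2m)$ sends position $k$ to $k-1$ mod $2m$, so $\overleftarrow{\widetilde\pi}$ pairs $2i_j - 1$ with $2i_{j+1} - 2 = 2(i_{j+1}-1)$ and pairs $2i_1 - 2 = 2(i_1 - 1)$ (which is $2m$ when $i_1 = 1$) with $2i_s - 1$.

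Next I would compare this with $\widetilde{K(\pi)}$. The key is to use the standard "interlacing" description of the Kreweras complement: placing the points $1, \overline 1, 2, \overline 2, \dots, m, \overline m$ on a line, $K(\pi)$ is the maximal partition of the barred points such that $\pi \wr K(\pi)$ is noncrossing; concretely, the block of $K(\pi)$ through $\overline i$ consists of those $\overline j$ that are "visible" from $\overline i$ without crossing $\pi$, and one checks that if $V = (i_1 < \dots < i_s)$ is a block of $\pi$ then $\overline{i_1 - 1}, \overline{i_2 - 1}, \dots, \overline{i_s - 1}$ (indices mod $m$, so $\overline{i_1 - 1} = \overline m$ when $i_1 = 1$) all lie in one block of $K(\pi)$ — indeed these are exactly the "gaps" immediately to the left of the points of $V$. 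Applying the fattening recipe to this block of $K(\pi)$ produces exactly the pairings $\{2(i_j - 1) - 1 + 1, \dots\}$ — that is, after the doubling, the barred point $\overline{i_j - 1}$ sits at doubled-position giving the pairs $(2(i_1-1), 2i_s - 1)$-type connections computed above. Matching these term by term against the pairings of $\overleftarrow{\widetilde\pi}$ from the previous step gives the identity; I would also separately verify the edge case of a singleton block $V = \{i\}$, where $\widetilde\pi$ contributes the single pair $(2i-1, 2i)$ and one checks both sides agree.

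The main obstacle I expect is getting the cyclic/modular bookkeeping exactly right — in particular, being careful about which "gap" of $\pi$ corresponds to which block of $K(\pi)$, and handling the wraparound (the point $1$ versus the point $m$, and the pair $\{2m-1, 2m\}$ versus $\{1,2\}$) consistently on both sides. A clean way to sidestep some of this is to argue more structurally: both $\pi \mapsto \widetilde{K(\pi)}$ and $\pi \mapsto \overleftarrow{\widetilde\pi}$ are bijections $NC(m) \to NC_2(2m)$, and one can characterize $\overleftarrow{\widetilde\pi}$ intrinsically as "the unique element $\sigma \in NC_2(2m)$ with $\sigma \vee \widehat 0_m^{\,\prime} = \widehat{\tau}$" for the appropriately shifted $\widehat 0$, then identify $\tau$ with $K(\pi)$ using the relation $\widetilde\pi \vee \widehat 0_m = \widehat\pi$ recalled in the preceding remark together with the known interaction of the Kreweras complement with $\widehat 0_m$ and the rotation. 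Either route reduces the proof to a finite, mechanical check, so I would present whichever bookkeeping turns out cleanest, most likely the direct block-by-block computation sketched above.
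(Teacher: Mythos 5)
Your direct-comparison strategy is fine in principle, but the combinatorial claim you hang it on is false, and it is exactly where the content of the lemma lives. You assert that if $V=(i_1<\dots<i_s)$ is a block of $\pi$ then the gaps $\overline{i_1-1},\dots,\overline{i_s-1}$ (mod $m$) all lie in one block of $K(\pi)$, and you then fatten ``this block of $K(\pi)$'' to match the shifted pairs coming from $V$. Already $\pi=1_2$ kills this: $K(1_2)=0_2$, so $\overline{1}$ and $\overline{2}$ lie in different blocks. For the paper's running example $\pi=\{\{1,4,5\},\{2,3\},\{6\}\}$ one has $K(\pi)=\{\{1,3\},\{2\},\{4\},\{5,6\}\}$, and the gaps to the left of the block $\{1,4,5\}$, namely $\overline{6},\overline{3},\overline{4}$, sit in three different blocks of $K(\pi)$; correspondingly the three shifted pairs produced by that block, $\{9,12\}$, $\{1,6\}$ and $\{7,8\}$, arise after fattening from the three distinct blocks $\{5,6\}$, $\{1,3\}$ and $\{4\}$ of $K(\pi)$. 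So there is no block-of-$\pi$ to block-of-$K(\pi)$ matching of the kind your second step assumes: the pairs coming from a single block of $\pi$ are scattered over several blocks of $K(\pi)$, and vice versa. The correct local statements, which a repaired version of your argument would have to prove, have a different shape: for consecutive elements $i_j<i_{j+1}$ of a block of $\pi$, the block of $K(\pi)$ containing $i_j$ has minimum $i_j$ and maximum $i_{j+1}-1$ (this accounts for the shifted pair $\{2i_j-1,\,2(i_{j+1}-1)\}$), while the shifted closing pair $\{2(i_1-1),\,2i_s-1\}$ records that $i_s$ is the cyclic successor of $i_1-1$ inside its block of $K(\pi)$ (with the wrap-around case $i_1=1$ handled separately). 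Proving these would indeed suffice, since both sides are perfect matchings of $\{1,\dots,2m\}$, so one containment of pair sets forces equality.

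Your fallback sketch does not repair this: identifying $\tau$ with $K(\pi)$ via ``the known interaction of the Kreweras complement with $\widehat{0}_m$ and the rotation'' is essentially the assertion being proved, so as written it is circular, or at best an appeal to an unproved fact. For comparison, the paper's proof avoids all of this gap bookkeeping by inducting on the number of blocks of $\pi$: it removes an interval block $V=\{l+1,\dots,l+s\}$, records how $\widetilde{\pi}$ and $K(\pi)$ are rebuilt from $\widetilde{\pi\setminus V}$ and $K(\pi\setminus V)$ when $V$ is reinserted, and invokes the induction hypothesis. Either fix (the corrected local statements above, or the induction) would work; as it stands, your proof has a genuine gap at its central step.
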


\begin{proof}
We will prove this by induction on the number of blocks of $\pi$.  If $\pi = 1_m$ has one
block, the result is trivial from the definitions.

Suppose now that $V = \{l+1,\dotsc,l+s\}$ is a block of $\pi$, $l \geq 1$.  First note
that $\widetilde{\pi}$ is obtained by taking $\widetilde{\pi \setminus V}$ then adding
the pairs $(2l+1,2(l+s)),(2l+2,2l+3),\dotsc,(2(l+s)-2,2(l+s)-1)$.

Observe that $K(\pi)$ is obtained by taking $K(\pi \setminus V)$, adding singletons
$\{l+1\},\dotsc,\{l+s-1\}$, then placing $l+s$ in the block containing $l$.  It follows
that $\widetilde{K(\pi)}$ is the partition obtained by taking $\widetilde{K(\pi \setminus V)}$, which
by induction is $\overleftarrow{\widetilde {\pi \setminus V}}$, then moving the leg
connected to $2l$ to $2(l+s)$ and adding the pairs $(2l,2(l+s)-1)$,
$(2l+1,2l+2),\dotsc,(2(l+s)-3,2(l+s)-2)$.  The result now follows.
\end{proof}

We will also need the following relationship between $\pi \mapsto \widetilde \pi$ and the
Kreweras complement on $NC(2m)$. This is a generalization of the relation
$$K(\hat\pi)=K(\widetilde 0_m\vee \widetilde \pi)=0_m\wr K(\pi)\qquad (\pi\in NC(m)),$$
which is obvious from the definition of $\hat \pi$.
\begin{lem}\label{intertwine}
If $\pi,\sigma \in NC(m)$ and $\sigma \leq \pi$, then $\widetilde \sigma \vee \widetilde
\pi \in NC(2m)$ and
\begin{equation*}
 K(\widetilde \sigma \vee \widetilde \pi) = \sigma \wr K(\pi).
\end{equation*}

\end{lem}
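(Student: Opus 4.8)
The plan is to imitate the proof of Lemma~\ref{kreweras}: induct on the number of blocks of $\pi$, using the recursive interval-stripping description of noncrossing partitions, and reduce the general claim to the already-established relation $K(\widehat\pi)=0_m\wr K(\pi)$ together with Lemma~\ref{kreweras}. The base case $\pi=1_m$ is where the hypothesis $\sigma\leq\pi$ forces nothing and $\widetilde\sigma\vee\widetilde{1_m}=\widetilde\sigma\vee 1_{2m}=1_{2m}$, so $K(\widetilde\sigma\vee\widetilde\pi)=0_{2m}$; on the other side $1_m\wr K(1_m)$ should also collapse to $0_{2m}$ once one checks that $K(1_m)=0_m$ and that $\sigma=1_m$ here, so both sides agree. (If instead one prefers to induct on $\pi\setminus\sigma$ or on $m$, the same interval-stripping bookkeeping applies; I will organize it around the block structure of $\pi$.)

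For the inductive step, let $V=\{l+1,\dotsc,l+s\}$ be an interval block of $\pi$. Since $\sigma\leq\pi$, the block $V$ is a union of blocks of $\sigma$, and because $\sigma$ is noncrossing each of these sub-blocks is an interval contained in $V$; write $\sigma|_V$ for the induced noncrossing partition of $V$. The key observations to record are: (a) $\widetilde\pi$ is obtained from $\widetilde{\pi\setminus V}$ by inserting, on the doubled points $2l+1,\dotsc,2(l+s)$, the "fattened" block of $V$, namely the pairings $(2l+1,2(l+s)),(2l+2,2l+3),\dotsc,(2(l+s)-2,2(l+s)-1)$; (b) $\widetilde\sigma$ is obtained from $\widetilde{\sigma\setminus V}$ by inserting, on those same doubled points, the fattening $\widetilde{\sigma|_V}$; and (c) because $\widetilde\pi$ restricted to $\{2l+1,\dotsc,2(l+s)\}$ is exactly $\widetilde{1_s}$, the join $(\widetilde\sigma\vee\widetilde\pi)$ restricted to that interval is $\widetilde{\sigma|_V}\vee\widetilde{1_s}=1_{2s}$, and away from it the join agrees with $\widetilde{\sigma\setminus V}\vee\widetilde{\pi\setminus V}$ after relabeling. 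Thus $\widetilde\sigma\vee\widetilde\pi$ is noncrossing (it is $(\widetilde{\sigma\setminus V}\vee\widetilde{\pi\setminus V})$ with one interval block coarsened to a single block), which already proves the first assertion, and its structure is transparent enough to compute the Kreweras complement.

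It then remains to check that taking $K$ of this modified partition produces exactly the modification of $\sigma\wr K(\pi)$ corresponding to the same stripped block. Concretely: $K(\pi)$ is $K(\pi\setminus V)$ with singletons $\{l+1\},\dotsc,\{l+s-1\}$ adjoined and $l+s$ thrown into the block of $l$ (as in the proof of Lemma~\ref{kreweras}), so $\sigma\wr K(\pi)$ differs from $(\sigma\setminus V)\wr K(\pi\setminus V)$ only in those doubled positions: the odd coordinates $2l+1,\dotsc,2(l+s)-1$ get partitioned by $\sigma|_V$, while the even coordinates $2l+2,\dotsc,2(l+s)$ become singletons except that $2(l+s)$ joins the block of $2l$. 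On the other side, collapsing the interval $\{2l+1,\dotsc,2(l+s)\}$ of $(\widetilde{\sigma\setminus V}\vee\widetilde{\pi\setminus V})$ to a single block and applying $K$ affects $K$ only locally, by general properties of the Kreweras complement on $NC(2m)$ (inserting a block into an interval, then taking $K$, replaces the corresponding gap-interval data in a controlled way); carrying out this local comparison, and invoking the induction hypothesis for the pair $(\sigma\setminus V,\pi\setminus V)$ together with Lemma~\ref{kreweras} (to handle the $\widetilde{\sigma|_V}$ piece via $\widetilde{K(\sigma|_V)}=\overleftarrow{\widetilde{\sigma|_V}}$, which degenerates appropriately since the surrounding block is coarsened), yields $K(\widetilde\sigma\vee\widetilde\pi)=\sigma\wr K(\pi)$.

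The main obstacle I anticipate is purely combinatorial bookkeeping in this last local comparison: getting the indices right when one simultaneously fattens, joins, and Kreweras-complements, and making sure the "coarsen one interval block to a single block" operation interacts correctly with $K$ on $NC(2m)$. It may be cleaner to avoid $K$-on-$NC(2m)$ manipulations altogether and instead verify $K(\widetilde\sigma\vee\widetilde\pi)=\sigma\wr K(\pi)$ by checking directly from the definition of the Kreweras complement that $(\widetilde\sigma\vee\widetilde\pi)\wr(\sigma\wr K(\pi))$ is noncrossing and maximal with this property — unwinding $\wr$ here gives a partition of $4m$ points which, after the substitutions $\widehat{(\cdot)}=\widetilde{(\cdot)}\vee\widehat 0$ and the already-known identity $K(\widehat\pi)=0_m\wr K(\pi)$, reduces to the corresponding noncrossing-and-maximal statement for $\pi$ and $K(\pi)$ themselves. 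I would try that route first, falling back on the inductive argument only if the direct verification becomes unwieldy.
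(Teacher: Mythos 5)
There is a genuine gap, and it traces to a single conflation that then propagates through both the base case and the inductive step: you are treating $\widetilde{1_m}$ as if it were $1_{2m}$. In the notation of the paper, $\hat 1_m = 1_{2m}$, but the \emph{fattening} $\widetilde{1_m}$ is the pair partition $\{(1,2m),(2,3),(4,5),\dotsc,(2m-2,2m-1)\}$. Consequently your base case is wrong on both sides: for $\pi=1_m$ the hypothesis $\sigma\leq\pi$ indeed forces nothing, so the base case must handle \emph{arbitrary} $\sigma\in NC(m)$ (it cannot be reduced to $\sigma=1_m$), and $\widetilde\sigma\vee\widetilde{1_m}$ is not $1_{2m}$ and its complement is not $0_{2m}$ — e.g.\ for $m=2$, $\sigma=1_2$ one gets $\widetilde{1_2}\vee\widetilde{1_2}=\{(1,4),(2,3)\}$ with $K=\{\{1,3\},\{2\},\{4\}\}=1_2\wr 0_2$, matching $\sigma\wr K(1_2)$ but not your claimed $0_4$. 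In fact the base case is where the real work lies: one shows $\widetilde\sigma\vee\widetilde{1_m}=\overrightarrow{\widehat{K(\sigma)}}$ using Lemma \ref{kreweras} (shift-compatibility of the join and $\widetilde{K(\sigma)}=\overleftarrow{\widetilde\sigma}$), then computes its complement via $K(\hat\tau)=0_m\wr K(\tau)$, commutation of $K$ with the shift, and $K^2(\sigma)=\overleftarrow{\sigma}$, arriving at $\sigma\wr 0_m=\sigma\wr K(1_m)$. Your proposal contains none of this, and without it the induction has no anchor.

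The same conflation invalidates step (c) of your inductive step: the restriction of $\widetilde\sigma\vee\widetilde\pi$ to $\{2l+1,\dotsc,2(l+s)\}$ is $\widetilde{\sigma|_V}\vee\widetilde{1_V}$, which is generally \emph{not} $1_{2s}$ (it is $\overrightarrow{\widehat{K(\sigma|_V)}}$, with $s+1-|\sigma|_V|$ blocks; it collapses to one block only when $\sigma|_V=0_s$). So the picture of "one interval block coarsened to a single block" is false, and your subsequent local computation of $K$ is internally inconsistent: if the join really were $1_{2s}$ on that interval, its local complement would consist of singletons, whereas the correct local form of $\sigma\wr K(\pi)$ has the odd points of the interval partitioned by $\sigma|_V$, exactly as you state two sentences later. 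The correct inductive step keeps the local piece as $\widetilde{\sigma|_V}\vee\widetilde{1_V}$, notes that $K$ of the whole join is obtained by complementing the outer part $\widetilde{\sigma\setminus\sigma|_V}\vee\widetilde{\pi\setminus V}$ and the inner part $\widetilde{\sigma|_V}\vee\widetilde{1_V}$ separately and then joining the blocks of $2l$ and $2(l+s)$, and invokes the induction hypothesis for the outer pair together with the (nontrivial) base case applied to $(\sigma|_V,1_V)$ for the inner one. Your fallback suggestion (verify directly that $(\widetilde\sigma\vee\widetilde\pi)\wr(\sigma\wr K(\pi))$ is noncrossing and that the second factor is maximal) is a reasonable alternative in principle, but as sketched it leaves the maximality — the actual content of the lemma — unargued, so it does not close the gap either.
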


\begin{proof}
We will prove this by induction on the number of blocks of $\pi$.  First suppose that
$\pi = 1_m$, then we have
\begin{equation*}
 \widetilde \sigma \vee \widetilde \pi = \overrightarrow{\overleftarrow{\widetilde \sigma} \vee \overleftarrow{ \widetilde \pi}} = \overrightarrow{\widetilde{ K(\sigma)} \vee \hat 0_m} = \overrightarrow{\widehat{K(\sigma)}}
\end{equation*}
is noncrossing.  Moreover,
\begin{align*}
 K(\widetilde \sigma \vee \widetilde \pi) = K\Bigl(\overrightarrow{\widehat{K(\sigma)}}
 \Bigr) = \overrightarrow{0_m \wr K^2(\sigma)},
\end{align*}
where for the last equality we used the equation for $K(\hat \pi)$ mentioned before the
Lemma \ref{intertwine} and the fact that the Kreweras complement commutes with shifting.
But, by \cite[Exercise 9.23]{ns}, we have that $K^2(\sigma)=\overleftarrow{\sigma}$ and
thus we finally get
\begin{equation*}
K(\widetilde \sigma \vee \widetilde \pi)=\overrightarrow{0_m\wr
\overleftarrow{\sigma}}=\sigma \wr 0_m.
\end{equation*}

Now suppose that $V = \{l+1,\dotsc,l+s\}$, $l \geq 1$ is an interval of $\pi$.  Observe
that $\widetilde \sigma \vee \widetilde \pi$ is the partition obtained by partitioning
$\{1,\dotsc,2l\} \cup \{2(l+s)+1,\dotsc,2m\}$ according to $\widetilde{\sigma \setminus
\sigma|_V} \vee \widetilde{\pi \setminus V}$, and $\{2l+1,\dotsc,2(l+s)\}$ according to
$\widetilde{\sigma|_V} \vee \widetilde{1_V}$.  It follows that $\widetilde \sigma \vee
\widetilde \pi$ is noncrossing and that $K(\widetilde \sigma \vee \widetilde \pi)$ is the
partition obtained by partitioning $\{1,\dotsc,2l\} \cup \{2(l+s)+1,\dotsc,2m\}$
according to $K(\widetilde{\sigma \setminus \sigma|_V} \vee \widetilde{\pi \setminus V})$
and $\{2l+1,\dotsc,2(l+s)\}$ according to $K(\widetilde {\sigma|_V} \vee
\widetilde{1_V})$, then joining the blocks containing $2l$ and $2(l+s)$.  On the other
hand, $K(\pi)$ is equal to the partition obtained by taking $K(\pi \setminus V)$ then
adding $\{l+1\},\dotsc,\{l+s-1\}$ and joining $l+s$ to $l$, and the result now follows by
induction.
\end{proof}

We will need to compare the number of blocks in the join of two partitions before and after fattening.  For this purpose we will use the following \textit{linearization lemma} of Kodiyalam-Sunder \cite{ks}.  Note that the notation $S \mapsto \widetilde S$ used in their paper corresponds to the inverse of the fattening procedure $\pi \mapsto \widetilde \pi$ used here.

\begin{thm}[\cite{ks}]\label{linearization}
Let $\pi, \sigma \in NC(m)$.  Then 
\begin{equation*}
 \bigl|\widetilde \pi \vee \widetilde \sigma \bigr| = m+ 2|\pi \vee \sigma| - |\pi| - |\sigma|.
\end{equation*}
In particular, if $\sigma \leq \pi$ then 
\begin{equation*}
 \bigl|\widetilde \pi \vee \widetilde \sigma \bigr| = m + |\pi| - |\sigma|.
\end{equation*}
\qed
\end{thm}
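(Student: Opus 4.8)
The ``in particular'' assertion is the special case $\sigma\le\pi$, where $\pi\vee\sigma=\pi$, so the second identity follows from the first. It is nonetheless worth recording this case on its own, because it has a quick proof from the tools already assembled. If $\sigma\le\pi$ then Lemma \ref{intertwine} gives $\widetilde\sigma\vee\widetilde\pi\in NC(2m)$ together with $K(\widetilde\sigma\vee\widetilde\pi)=\sigma\wr K(\pi)$. Combining the standard cardinality identity $|\tau|+|K(\tau)|=n+1$ for $\tau\in NC(n)$ (here with $n=2m$) with $|\sigma\wr K(\pi)|=|\sigma|+|K(\pi)|=|\sigma|+(m+1-|\pi|)$ yields $|\widetilde\pi\vee\widetilde\sigma|=(2m+1)-|\sigma\wr K(\pi)|=m+|\pi|-|\sigma|$, which is exactly $m+2|\pi\vee\sigma|-|\pi|-|\sigma|$ in this case. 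For the general identity I would argue separately, as follows.

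I would prove the general formula by induction on $m$, the case $m=1$ being immediate. For $m\ge 2$, delete the largest point $m$ and set $\pi'=\pi|_{\{1,\dots,m-1\}}$, $\sigma'=\sigma|_{\{1,\dots,m-1\}}\in NC(m-1)$, to which the inductive hypothesis applies. The mechanism to exploit is how $\widetilde\pi$ is rebuilt from $\widetilde{\pi'}$: if $\{m\}$ is a singleton block of $\pi$, then $\widetilde\pi=\widetilde{\pi'}\cup\{(2m-1,2m)\}$, i.e. an isolated pair is adjoined; if instead $m$ lies in a $\pi$-block of size $\ge 2$ with smallest element $i_1$ and second-largest element $a$, then $\widetilde\pi$ is obtained from $\widetilde{\pi'}$ by deleting the pair $e_\pi:=(2i_1-1,2a)$ and adjoining the two pairs $(2i_1-1,2m)$ and $(2a,2m-1)$ — the arc $e_\pi$ is ``cut'' and rerouted through the two new points. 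The same description holds for $\widetilde\sigma$, with an edge $e_\sigma$ in the non-singleton case. Since $|\widetilde\pi\vee\widetilde\sigma|$ is the number of connected components of the graph $\widetilde\pi\cup\widetilde\sigma$ on $\{1,\dots,2m\}$ (every vertex has degree two, so the graph is a disjoint union of cycles), I would compare this component count with that of $\widetilde{\pi'}\cup\widetilde{\sigma'}$, splitting into four cases according to whether $\{m\}$ is a singleton of $\pi$ and of $\sigma$. When $\{m\}$ is a singleton of at least one of the two, the bookkeeping is short (an isolated pair is added, or a single edge of $\widetilde{\pi'}\cup\widetilde{\sigma'}$ is subdivided by the new vertices, neither of which changes the number of cycles), and one checks this against the changes $\Delta|\pi|,\Delta|\sigma|,\Delta|\pi\vee\sigma|\in\{0,1\}$, recovering $\Delta|\widetilde\pi\vee\widetilde\sigma|=1+2\Delta|\pi\vee\sigma|-\Delta|\pi|-\Delta|\sigma|$ in each instance.

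The remaining case — $m$ in a block of size $\ge 2$ of both $\pi$ and $\sigma$ — is where I expect the real obstacle to be. Here the surgery deletes the two edges $e_\pi,e_\sigma$ from $\widetilde{\pi'}\cup\widetilde{\sigma'}$ and inserts the new vertices $2m-1,2m$ joined by four new edges; the resulting change in the cycle count is $+1$ or $-1$ according to whether $e_\pi$ and $e_\sigma$ lay in different cycles or in the same cycle of $\widetilde{\pi'}\cup\widetilde{\sigma'}$. On the other side, deleting $m$ leaves $\Delta|\pi|=\Delta|\sigma|=0$, while $\Delta|\pi\vee\sigma|$ is $-1$ or $0$ according to whether the $\pi$-block and $\sigma$-block of $m$ lay in distinct blocks of $\pi\vee\sigma$ or in a common one. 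The crux is to prove that these two dichotomies coincide — that $e_\pi$ and $e_\sigma$ lie in a common cycle of the fattened union graph exactly when the $\pi$-block and $\sigma$-block of $m$ lie in a common block of $\pi\vee\sigma$ — which, because cycles of $\widetilde{\pi'}\cup\widetilde{\sigma'}$ do not in general correspond to blocks of $\pi'\vee\sigma'$, requires tracing which cycle a given fattened arc belongs to in terms of the original partitions and the cyclic order around their blocks. Once this equivalence is established, $\Delta|\widetilde\pi\vee\widetilde\sigma|=1+2\Delta|\pi\vee\sigma|$ in both subcases, closing the induction. Should the cycle-tracking prove awkward, an alternative is to run the induction instead on $|\pi|$, peeling off a proper interval block $V=\{l+1,\dots,l+s\}$ of $\pi$: the base case $\pi=1_m$ is covered by the special case above, and the inductive step uses that $\widetilde\pi\vee\widetilde\sigma$ splits along $V$ into the restriction on $\{2l+1,\dots,2(l+s)\}$ (handled by Lemma \ref{intertwine}, since $\sigma|_V\le 1_V$) and the restriction on the complementary points.
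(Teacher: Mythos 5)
The paper itself does not prove this statement: it is imported from Kodiyalam--Sunder \cite{ks} (hence the bare \verb|\qed|), so your argument has to stand on its own. Your treatment of the case $\sigma\le\pi$ is correct and self-contained: Lemma \ref{intertwine} gives $K(\widetilde\sigma\vee\widetilde\pi)=\sigma\wr K(\pi)$, and the identity $|\tau|+|K(\tau)|=n+1$ on $NC(n)$ with $n=2m$ yields $|\widetilde\pi\vee\widetilde\sigma|=m+|\pi|-|\sigma|$. Your description of how $\widetilde\pi$ is rebuilt from $\widetilde{\pi'}$ is also accurate, and the two cases of the induction in which $\{m\}$ is a singleton of at least one of the partitions do check out.

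The gap is in the remaining case, where $m$ lies in a block of size at least $2$ of both $\pi$ and $\sigma$. You assert that the cycle count changes by $+1$ or $-1$ according to whether $e_\pi$ and $e_\sigma$ lie in the same cycle of $\widetilde{\pi'}\cup\widetilde{\sigma'}$ or in different ones. The ``different cycles $\Rightarrow-1$'' half is genuine graph theory, but the other half is not: deleting two edges from a single cycle leaves two paths, and reattaching their four endpoints through the two new length-two paths passing through $2m$ and $2m-1$ produces either two cycles or one, depending on the cyclic order in which the four endpoints occur around the original cycle. Only one of the two possible arrangements gives $+1$; the other gives $0$, which is incompatible with the target identity (whose right-hand side changes by the odd amount $1+2\Delta|\pi\vee\sigma|$ here), so the unfavorable arrangement must be \emph{excluded}, not ignored. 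This is precisely where noncrossingness has to enter, and it is not a removable technicality: for $\pi=1_4$ and the crossing partition $\sigma=\{\{1,3\},\{2,4\}\}$ one computes $|\widetilde\pi\vee\widetilde\sigma|=1$ while $m+2|\pi\vee\sigma|-|\pi|-|\sigma|=3$, so both the formula and your local surgery count genuinely fail without the hypothesis. Thus your hard case contains two unproven claims, not one: besides the dichotomy-matching you correctly flag as the crux, you must also show that for noncrossing $\pi,\sigma$ the endpoints are always paired favourably in the same-cycle situation; neither is supplied. The fallback you sketch (peeling an interval block $V$ of $\pi$) does not repair the general identity either, since for $\sigma\not\le\pi$ a block of $\sigma$ may meet both $V$ and its complement, so $\widetilde\pi\vee\widetilde\sigma$ need not split along $V$.
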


We now introduce some special classes of noncrossing partitions and prove some basic
results.  These are related to integration on the quantum unitary group via the
Weingarten formula to be discussed in the next section.

\begin{notation} Let $\epsilon_1,\dotsc,\epsilon_{2m} \in \{1,*\}$.
 \begin{enumerate}
\item $NC_h^{\epsilon}(2m)$ denote the set of partitions $\pi \in NC(2m)$ such that each block $V$ of $\pi$ has an even number of elements, and $\epsilon|_V$ is alternating, i.e., $\epsilon|_V = 1*1*\dotsb 1*$ or $*1*1\dotsb *1$.
 \item $NC_2^{\epsilon}(2m)$ will denote the collection of $\pi \in NC_2(2m)$ such that each pair in $\pi$ connects a $1$ with a $*$, i.e.,
\begin{equation*}
 s \sim_\pi t \Rightarrow \epsilon_s \neq \epsilon_t.
\end{equation*}

\item $NC^{\epsilon}(m)$ will denote the collection of $\pi \in NC(m)$ such that $\widetilde \pi \in NC_2^{\epsilon}(m)$.
\end{enumerate}
\end{notation}

\begin{lem} \label{nch}
Let $\epsilon_1,\dotsc,\epsilon_{2m} \in \{1,*\}$.  If $\sigma, \pi \in NC^{\epsilon}(m)$
and $\sigma \leq \pi$, then $\widetilde \sigma \vee \widetilde \pi$ is in
$NC_h^{\epsilon}(2m)$.  Conversely, if $\tau \in NC_h^{\epsilon}(2m)$ then there are
unique $\sigma,\pi \in NC^{\epsilon}(m)$ such that $\sigma \leq \pi$ and $\tau =
\widetilde \sigma \vee \widetilde \pi$.
\end{lem}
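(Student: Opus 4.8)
\textbf{Proof plan for Lemma \ref{nch}.}

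The plan is to combine Lemma \ref{intertwine} with a direct analysis of the $\epsilon$-conditions. First I would prove the forward direction. Given $\sigma,\pi \in NC^\epsilon(m)$ with $\sigma \leq \pi$, Lemma \ref{intertwine} already tells us $\widetilde\sigma \vee \widetilde\pi \in NC(2m)$ and that $K(\widetilde\sigma\vee\widetilde\pi) = \sigma \wr K(\pi)$. So it remains to show that every block of $\widetilde\sigma \vee \widetilde\pi$ has even size and that $\epsilon$ restricted to each block alternates. The key observation is structural: since $\widetilde\sigma$ and $\widetilde\pi$ are both in $NC_2^\epsilon(2m)$ (by definition of $NC^\epsilon(m)$), every block of their join is obtained by alternately following $\widetilde\sigma$-pairs and $\widetilde\pi$-pairs; each such pair joins a $1$ to a $*$, so as we traverse a block the values of $\epsilon$ must alternate $1,*,1,*,\dots$, which forces the block to have even size and $\epsilon|_V$ to be alternating. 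I would make this ``alternating traversal'' argument precise by noting that in the join of two pair partitions, each connected component is either a single shared pair or a cycle/path alternating between the two pairings, and here (since the join is noncrossing and lies in $NC(2m)$) the combinatorics work out so that $\epsilon$ alternates along it.

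For the converse, suppose $\tau \in NC_h^\epsilon(2m)$. I would first produce $\sigma$ and $\pi$ and then check uniqueness. The natural candidates come from ``splitting'' each block of $\tau$ in the two canonical ways dictated by the two colours: since each block $V$ of $\tau$ has even size with $\epsilon|_V$ alternating, there is a natural pairing of consecutive elements of $V$ into ``$\widetilde\pi$-type'' pairs $(2i-1, 2j)$-shaped connections and a complementary pairing into ``$\widetilde\sigma$-type'' connections. Concretely, I expect one recovers $\widetilde\pi = \tau \vee \hat 0_m$-type data, i.e.\ defines $\pi \in NC(m)$ by the rule that its fattening is the pair partition obtained from $\tau$ by keeping only the ``outer'' connections within each block, and $\sigma$ by keeping the ``inner'' connections; one then checks $\widetilde\sigma$ and $\widetilde\pi$ are genuinely fattenings of noncrossing partitions on $m$ points (using the inverse-fattening description: $\widetilde\nu \vee \hat 0_m = \hat\nu$), that $\sigma \leq \pi$, and that both lie in $NC^\epsilon(m)$ because the alternation of $\epsilon|_V$ distributes correctly onto the two sub-pairings. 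Finally, reassembling, $\widetilde\sigma \vee \widetilde\pi = \tau$.

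Uniqueness I would get from the first direction together with the bijectivity of fattening: if $\widetilde\sigma \vee \widetilde\pi = \tau = \widetilde{\sigma'}\vee\widetilde{\pi'}$ with all four partitions in $NC^\epsilon(m)$ and $\sigma\leq\pi$, $\sigma'\leq\pi'$, then applying the Kreweras complement and Lemma \ref{intertwine} gives $\sigma \wr K(\pi) = \sigma'\wr K(\pi')$, whence $\sigma = \sigma'$ (odd points) and $K(\pi) = K(\pi')$ (even points), so $\pi = \pi'$ since $K$ is a bijection on $NC(m)$. The main obstacle I anticipate is the converse existence step: making rigorous the claim that a block $V$ of $\tau$ decomposes canonically into the ``fattening-type'' pair data for two partitions $\sigma|_V \leq \pi|_V = 1_V$ on $|V|/2$ points, and checking that these local decompositions glue into global noncrossing partitions on $m$ points compatibly with $\leq$. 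This is where one must really use both the noncrossing and the $NC_h^\epsilon$ (even blocks, alternating $\epsilon$) hypotheses, most cleanly by induction on $m$ or on the number of blocks of $\tau$, peeling off an interval block of $\tau$ just as in the proofs of Lemmas \ref{kreweras} and \ref{intertwine}.
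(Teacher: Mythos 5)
Your skeleton — Lemma \ref{intertwine} for noncrossingness, and uniqueness via $K(\widetilde\sigma\vee\widetilde\pi)=\sigma\wr K(\pi)$ together with injectivity of the Kreweras complement — is sound, and the uniqueness argument is complete and essentially the paper's. But the two steps you defer are exactly where the content of the lemma lies, and as written they are gaps. In the forward direction, alternation of $\epsilon$ \emph{along the cycle traversal} of a block $V$ of $\widetilde\sigma\vee\widetilde\pi$ is not the condition defining $NC_h^\epsilon(2m)$, which requires $\epsilon|_V$ to alternate in the induced \emph{linear} order of $V$; ``the combinatorics work out'' is precisely the missing argument. It can be supplied by a parity observation: since $\widetilde\sigma,\widetilde\pi\le\tau$ and $\tau$ is noncrossing, each of $\widetilde\sigma|_V,\widetilde\pi|_V$ is a noncrossing pairing of $V$ in its induced order, and any pair of a noncrossing pairing joins an odd-indexed to an even-indexed element of $V$ (the elements strictly between them must pair among themselves). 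Thus every edge of your alternating cycle flips both the value of $\epsilon$ and the index parity, so $\epsilon$ is constant on the two index-parity classes of $V$, which is exactly linear alternation (and gives even block size for free). The paper avoids the cycle picture entirely: it doubles the string to $\hat\epsilon$, observes that $\tau\in NC_h^\epsilon(2m)$ iff $\widetilde\tau\in NC_2^{\hat\epsilon}(4m)$, and computes $\overleftarrow{\widetilde\tau}=\widetilde{K(\tau)}=\widetilde{\sigma\wr K(\pi)}$ from Lemmas \ref{kreweras} and \ref{intertwine}.

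For the converse, your plan to split each block of $\tau$ into ``inner'' and ``outer'' pairings and glue is workable in principle but is where your proposal is weakest: which of the two canonical pairings of a block should feed into $\widetilde\sigma$ and which into $\widetilde\pi$ depends on the parity of the block's minimum, and the gluing, the verification that the results are fattenings of noncrossing partitions, and the inequality $\sigma\le\pi$ are all left open. There is a much shorter route, which is the paper's: since every block of $\tau$ has even size, no block of $K(\tau)$ joins an odd point to an even point (if $i\sim_{K(\tau)}j$ with $i<j$, then $\{i+1,\dotsc,j\}$ is a union of $\tau$-blocks, hence of even size), so $K(\tau)=\sigma\wr\rho$ for some $\sigma,\rho\in NC(m)$; writing $\rho=K(\pi)$, noncrossingness of $\sigma\wr K(\pi)$ forces $K(\pi)\le K(\sigma)$, i.e.\ $\sigma\le\pi$, and then Lemma \ref{intertwine} plus injectivity of $K$ gives $\tau=\widetilde\sigma\vee\widetilde\pi$ at once (uniqueness included). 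Membership $\sigma,\pi\in NC^\epsilon(m)$ then follows from the same parity observation as above: on each block $V$ of $\tau$ the string $\epsilon|_V$ alternates, and $\widetilde\sigma|_V,\widetilde\pi|_V$ are noncrossing pairings of $V$, so each of their pairs connects a $1$ with a $*$. I would recommend replacing your inductive block-splitting by this argument and, in the forward direction, writing out the parity step explicitly; without those two pieces the proposal does not yet constitute a proof.
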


\begin{proof}
First suppose that $\tau \in NC_h^{\epsilon}(2m)$.  Since each block of $\tau$ has an
even number of elements, we have $K(\tau) = \sigma \wr K(\pi)$ for some $\sigma,\pi \in
NC(m)$ such that $\sigma \leq \pi$. By Lemma \ref{intertwine} we have $\tau = \widetilde
\sigma \vee \widetilde \pi$, and this clearly determines $\sigma$ and $\pi$ uniquely.  If
$V$ is a block of $\tau$, then $\epsilon|_V$ is alternating and hence $\widetilde
\pi|_V,\widetilde \sigma|_V \in NC_2^{\epsilon}(V)$.  It follows that $\pi,\sigma \in
NC^{\epsilon}(m)$.

Conversely, let $\sigma, \pi \in NC^{\epsilon}(m)$ with $\sigma \leq \pi$.  Let $\hat
\epsilon =
(\epsilon_1,\epsilon_1,\epsilon_2,\epsilon_2,\dotsc,\epsilon_{2m},\epsilon_{2m})$.
Observe that if $\tau \in NC(2m)$, then $\tau \in NC_h^{\epsilon}(2m)$ if and only if
$\widetilde \tau \in NC_2^{\hat \epsilon}(4m)$.

So let $\tau = \widetilde \sigma \vee \widetilde \pi$, we need to show $\widetilde \tau
\in NC_2^{\hat \epsilon}(4m)$.  Now
\begin{align*}
 \overleftarrow{\widetilde \tau} = \widetilde{K(\tau)} = \widetilde{\sigma \wr K(\pi)},
\end{align*}
where we have applied Lemmas \ref{kreweras} and \ref{intertwine}.  In other words,
$\overleftarrow{\widetilde \tau}$ is the partition given by partitioning
$\{1,2,5,6,\dotsc,4m-3,4m-2\}$ according to $\widetilde \sigma$ and
$\{3,4,7,8,\dotsc,4m-1,4m\}$ according to $\widetilde{K(\pi)} = \overleftarrow{\widetilde
\pi}$.  Now since $\sigma,\pi \in NC^\epsilon(m)$, it follows that
$\overleftarrow{\widetilde \tau} \in NC_2^{\overleftarrow{\hat \epsilon}}(4m)$, where
$\overleftarrow{\hat \epsilon} =
(\epsilon_1,\epsilon_2,\epsilon_2,\dotsc,\epsilon_{2m},\epsilon_{2m},\epsilon_1)$, and
hence $\widetilde \tau \in NC_2^{\hat\epsilon}(4m)$.
\end{proof}

\begin{lem}
$NC^{\epsilon}(m)$ is closed under taking intervals in $NC(m)$, i.e., if $\sigma,\pi \in
NC^{\epsilon}(m)$ and $\tau \in NC(m)$ is such that $\sigma < \tau < \pi$, then $\tau \in
NC^{\epsilon}(m)$.
\end{lem}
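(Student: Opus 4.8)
The plan is to show that the pair partition $\widetilde\tau$ refines $\widetilde\sigma\vee\widetilde\pi$, and then to verify the coloring condition one block of $\widetilde\sigma\vee\widetilde\pi$ at a time. To set up: we are given $\sigma<\tau<\pi$ with $\sigma,\pi\in NC^\epsilon(m)$; in particular $\sigma\le\pi$, so by Lemma \ref{nch} the partition $\rho:=\widetilde\sigma\vee\widetilde\pi$ belongs to $NC_h^\epsilon(2m)$. Thus every block $W$ of $\rho$ has even cardinality and $\epsilon|_W$ is alternating.

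The main point is the inclusion $\widetilde\tau\le\rho$. Since $\sigma\le\tau$ and $\sigma\le\pi$, Lemma \ref{intertwine} gives $\widetilde\sigma\vee\widetilde\tau,\ \widetilde\sigma\vee\widetilde\pi\in NC(2m)$ with $K(\widetilde\sigma\vee\widetilde\tau)=\sigma\wr K(\tau)$ and $K(\widetilde\sigma\vee\widetilde\pi)=\sigma\wr K(\pi)$. As $\tau\le\pi$ and the Kreweras complement is order-reversing on $NC(m)$, we have $K(\tau)\ge K(\pi)$, hence $\sigma\wr K(\tau)\ge\sigma\wr K(\pi)$, i.e. $K(\widetilde\sigma\vee\widetilde\tau)\ge K(\widetilde\sigma\vee\widetilde\pi)$. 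Applying the (order-reversing) Kreweras complement on $NC(2m)$, and using $K^2=\overleftarrow{(\,\cdot\,)}$ together with the invariance of the order on $NC(2m)$ under the shift, we conclude $\widetilde\sigma\vee\widetilde\tau\le\widetilde\sigma\vee\widetilde\pi=\rho$, so in particular $\widetilde\tau\le\rho$. (As a consistency check, Theorem \ref{linearization} gives $|\widetilde\sigma\vee\widetilde\tau|=m+|\tau|-|\sigma|\ge m+|\pi|-|\sigma|=|\rho|$.)

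Now fix a block $W$ of $\rho$. Since $\widetilde\tau$ is a pair partition of $\{1,\dots,2m\}$ and $\widetilde\tau\le\rho$, each pair of $\widetilde\tau$ meeting $W$ is contained in $W$, so these pairs form a noncrossing pair partition of $W$. For any such pair $\{a,b\}$ with $a<b$, noncrossingness forces the points of $W$ strictly between $a$ and $b$ to be matched among themselves, hence their number is even; so $a$ and $b$ occupy positions of opposite parity inside $W$, and since $\epsilon|_W$ is alternating we get $\epsilon_a\ne\epsilon_b$. As $W$ was an arbitrary block of $\rho$, every pair of $\widetilde\tau$ joins a $1$ to a $*$, i.e. $\widetilde\tau\in NC_2^\epsilon(2m)$, which is exactly the assertion $\tau\in NC^\epsilon(m)$.

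The step requiring care is $\widetilde\tau\le\rho$: the fattening operation is not monotone (already $\widetilde{0_2}=\{\{1,2\},\{3,4\}\}$ and $\widetilde{1_2}=\{\{1,4\},\{2,3\}\}$ are incomparable), so there is no direct comparison $\widetilde\tau\le\widetilde\pi$, and it is precisely the detour through the Kreweras complement — Lemma \ref{intertwine} and the order-reversal of $K$ — that supplies the inclusion. Once it is available, the coloring condition decouples over the blocks of $\rho$ and is settled by the elementary parity count above.
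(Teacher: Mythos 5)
Your proof is correct, but it follows a genuinely different route from the paper's. The paper first reduces to the case $\pi = 1_m$ via the inductive definition of the fattening, then studies $\widehat{K(\tau)}$ sandwiched between $\widehat{0_m}$ and $\widehat{K(\sigma)} = \overleftarrow{\widetilde\sigma \vee \widetilde{1_m}}$, proves by contradiction (with a parity count inside blocks of $\widehat{K(\sigma)}$) that $\widehat{K(\tau)}$ lies in $NC_h^{\delta}(2m)$ for the shifted word $\delta$, shifts back, and finally invokes the \emph{uniqueness} half of Lemma \ref{nch} to identify $\tau$ with an element of $NC^{\epsilon}(m)$. You instead keep both $\sigma$ and $\pi$ in play: you use only the \emph{forward} half of Lemma \ref{nch} to place $\rho = \widetilde\sigma\vee\widetilde\pi$ in $NC_h^{\epsilon}(2m)$, derive the key containment $\widetilde\tau \leq \rho$ from Lemma \ref{intertwine} together with the order-reversal of the Kreweras complement (and $K^2 = \overleftarrow{(\,\cdot\,)}$), and then check the coloring condition pair by pair via the same kind of parity count, applied to the alternating word $\epsilon|_W$ on each block $W$ of $\rho$. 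Both arguments bottom out in a parity argument, but yours avoids the reduction step and the appeal to uniqueness in Lemma \ref{nch}, at the cost of invoking the anti-automorphism property of $K$ explicitly (a standard fact, and one the paper uses elsewhere, e.g.\ in the identity $\mu_{2m}(\widetilde\sigma\vee\widetilde\alpha, \widetilde\pi\vee\widetilde\alpha) = \mu_{2m}(K(\widetilde\pi\vee\widetilde\alpha),K(\widetilde\sigma\vee\widetilde\alpha))$, so this is fair game). You are also right to flag that the fattening map is not monotone, so the containment $\widetilde\tau\leq\rho$ genuinely requires the detour through $K$; that observation is the crux of your argument and it is handled correctly.
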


\begin{proof}
Let $\sigma,\pi \in NC^{\epsilon}(m)$, and $\tau \in NC(m)$ such that $\sigma < \tau <
\pi$.  From the inductive definition of $\widetilde \tau$, to show that $\tau \in
NC^{\epsilon}(m)$ it suffices to consider $\pi = 1_m$.   Now by the previous lemma, we
have $\widetilde \sigma \vee \widetilde{1_m} \in NC_h^{\epsilon}(2m)$.  By Lemma
\ref{kreweras},
\begin{equation*}
 \overleftarrow{\widetilde \sigma \vee \widetilde{1_m}} = \widetilde{K(\sigma)} \vee \hat 0_m = \widehat{K(\sigma)}.
\end{equation*}
Since $\sigma \leq \tau$, we have $\widehat 0_m \leq \widehat{K(\tau)} \leq \widehat
{K(\sigma)}$.  Let $\delta = (\epsilon_{2},\dotsc,\epsilon_{2m},\epsilon_1)$, and suppose
that $\widehat{K(\tau)} \notin NC_h^{\delta}(2m)$.  Let $V$ be a block of $\widehat{
K(\tau)}$, and note that $V$ is of the form $(2i_1-1,2i_1,\dotsc,2i_s-1,2i_s)$ for some
$i_1 < \dotsb < i_s$.  Since $\hat 0_m \in NC_h^{\delta}(2m)$, it follows that there is a
$1 \leq l < s$ with $\delta_{2i_l} = \delta_{2i_{l+1}-1}$.  Now since $\widehat 0_m \leq
\widehat{K(\tau)} \leq \widehat {K(\sigma)}$, it follows that the block $W$ of
$\widehat{K(\sigma)}$ which contains $V$ must have an even number of elements between
$2i_l$ and $2i_{l+1}-1$.  But then $\delta|_{W}$ cannot be alternating, which contradicts
$\widehat{K(\sigma)} \in NC_h^{\delta}(2m)$.

So we have shown that $\widehat{ K(\tau)} \in NC_h^{\delta}(2m)$, and since
\begin{equation*}
 \overrightarrow{\widehat{K(\tau)}} = \overrightarrow{\widetilde{K(\tau)} \vee \hat 0_m} = \widetilde \tau \vee \widetilde 1_m,
\end{equation*}
we have $\widetilde \tau \vee \widetilde {1_m} \in NC_h^{\epsilon}(2m)$.  But then by the
previous lemma, there is a $\gamma \in NC^{\epsilon}(m)$ with $\widetilde \gamma \vee
\widetilde{1_m} = \widetilde \tau \vee \widetilde{1_m}$, and by Lemma \ref{intertwine}
this implies $\tau = \gamma$ is in $NC^\epsilon(m)$ as claimed.
\end{proof}

\section{Integration on the quantum unitary group}

We begin by recalling the \textit{Weingarten formula} from \cite{bc1} for computing
integrals with respect to the Haar state on $A_u(n)$.

Let $\epsilon_1,\dotsc,\epsilon_{2m} \in \{1,*\}$ and define, for $n \in \N$, the
\textit{Gram matrix}
\begin{equation*}
G_{\epsilon n}(\pi,\sigma) = n^{|\pi \vee \sigma|} \;\;\;\;\;\;\; (\pi,\sigma \in
NC_2^{\epsilon}(2m)).
\end{equation*}
It is shown in \cite{bc1} that $G_{\epsilon n}$ is invertible for $n \geq 2$, let
$W_{\epsilon n}$ denote its inverse.

\begin{thm}\cite{bc1}\label{weingarten}
The Haar state on $A_u(n)$ is given by
\begin{align*}
 \psi_n (U_{i_1j_1}^{\epsilon_1}\dotsb U_{i_{2m}j_{2m}}^{\epsilon_{2m}}) &= \sum_{\substack{\pi,\sigma \in NC_2^{\epsilon}(2m)\\ \pi \leq \ker \mathbf i\\ \sigma \leq \ker \mathbf j}} W_{\epsilon n}(\pi,\sigma)\\
\psi_n (U_{i_1j_1}^{\epsilon_1}\dotsb U_{i_{2m+1}j_{2m+1}}^{\epsilon_{2m+1}}) &= 0,
\end{align*}
for $1 \leq i_1,j_1,\dotsc,i_{2m+1},j_{2m+1} \leq n$ and
$\epsilon_1,\dotsc,\epsilon_{2m+1} \in \{1,*\}$.
\end{thm}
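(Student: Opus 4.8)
The plan is to derive the formula, following \cite{bc1}, from the representation theory of compact quantum groups together with an explicit description of the fixed-point spaces of $A_u(n)$. Fix $\epsilon = \epsilon_1\dotsb\epsilon_{2m}\in\{1,*\}^{2m}$ and let $U^{\epsilon} = U^{\epsilon_1}\otimes\dotsb\otimes U^{\epsilon_{2m}}$ be the corresponding tensor product of the fundamental corepresentation $U$ (with the convention $U^1 = U$, $U^* = \bar U$), regarded as a unitary in $M_{n^{2m}}(A_u(n))$ acting on $H = (\C^n)^{\otimes 2m}$ with its standard orthonormal basis $\{e_{i_1}\otimes\dotsb\otimes e_{i_{2m}}\}$. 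By Woronowicz's Peter--Weyl theory \cite{wor1}, $(\mathrm{id}\otimes\psi_n)(U^{\epsilon})$ is the orthogonal projection $P_\epsilon$ onto the fixed-point space
\[
\mathrm{Fix}(U^{\epsilon}) = \{\xi\in H : U^{\epsilon}(\xi\otimes 1) = \xi\otimes 1\}.
\]
Since the $(\mathbf i,\mathbf j)$-matrix entry of $(\mathrm{id}\otimes\psi_n)(U^{\epsilon})$ is exactly $\psi_n(U^{\epsilon_1}_{i_1j_1}\dotsb U^{\epsilon_{2m}}_{i_{2m}j_{2m}})$, it remains to (i) identify $\mathrm{Fix}(U^{\epsilon})$ concretely and (ii) express $P_\epsilon$ in the stated closed form; the odd case is handled by the same argument once one knows the fixed space is $\{0\}$.

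For step (i) I would invoke the Tannaka--Krein type description of the intertwiners of $A_u(n)$ from \cite{bc1}: the tensor category generated by $U$ and $\bar U$ is the ``free'' one generated by the duality maps $1\to\C^n\otimes\overline{\C^n}$ and $1\to\overline{\C^n}\otimes\C^n$ (which exist precisely because $U^*U = UU^* = 1$, there being no further relation between $U$ and $\bar U$). Unwinding this, $\mathrm{Fix}(U^{\epsilon})$ is spanned by the vectors
\[
\xi_\pi = \sum_{\substack{1\leq i_1,\dotsc,i_{2m}\leq n\\ \ker\mathbf i\,\geq\,\pi}} e_{i_1}\otimes\dotsb\otimes e_{i_{2m}}, \qquad \pi\in NC_2^{\epsilon}(2m),
\]
the constraint $\epsilon_s\neq\epsilon_t$ for $s\sim_\pi t$ being forced since only a cup joining a ``$1$''-leg to a ``$*$''-leg produces an invariant vector. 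Moreover these vectors are linearly independent for $n\geq 2$, equivalently the Gram matrix
\[
\langle\xi_\pi,\xi_\sigma\rangle = \#\{\mathbf i : \ker\mathbf i\geq\pi\vee\sigma\} = n^{|\pi\vee\sigma|} = G_{\epsilon n}(\pi,\sigma)
\]
is invertible, which is the fact recalled before the statement. When the number of legs is odd, $NC_2^{\epsilon}$ is empty and $\mathrm{Fix}(U^{\epsilon})=\{0\}$.

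For step (ii), set $Q = \sum_{\pi,\sigma\in NC_2^{\epsilon}(2m)} W_{\epsilon n}(\pi,\sigma)\,\xi_\pi\xi_\sigma^{*}$, viewing each $\xi_\pi$ as a column vector so that $\xi_\pi\xi_\sigma^{*}$ is a rank-one operator on $H$. Since $W_{\epsilon n} = G_{\epsilon n}^{-1}$ is a real symmetric matrix, $Q$ is self-adjoint; its range lies in $\mathrm{span}\{\xi_\pi\} = \mathrm{Fix}(U^{\epsilon})$; and for each $\tau\in NC_2^{\epsilon}(2m)$,
\[
Q\xi_\tau = \sum_{\pi,\sigma} W_{\epsilon n}(\pi,\sigma)\,G_{\epsilon n}(\sigma,\tau)\,\xi_\pi = \xi_\tau,
\]
so $Q = P_\epsilon$. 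Reading off the $(\mathbf i,\mathbf j)$-entry and using $\langle e_{i_1}\otimes\dotsb\otimes e_{i_{2m}},\xi_\pi\rangle = 1$ when $\pi\leq\ker\mathbf i$ and $0$ otherwise (and similarly for $\sigma$ and $\mathbf j$) yields
\[
\psi_n(U^{\epsilon_1}_{i_1j_1}\dotsb U^{\epsilon_{2m}}_{i_{2m}j_{2m}}) = \bigl\langle P_\epsilon(e_{j_1}\otimes\dotsb\otimes e_{j_{2m}}),\, e_{i_1}\otimes\dotsb\otimes e_{i_{2m}}\bigr\rangle = \sum_{\substack{\pi,\sigma\in NC_2^{\epsilon}(2m)\\ \pi\leq\ker\mathbf i,\ \sigma\leq\ker\mathbf j}} W_{\epsilon n}(\pi,\sigma),
\]
which is the first formula; the second follows from $\mathrm{Fix}(U^{\epsilon})=\{0\}$ in the odd case. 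The main obstacle is step (i): one needs the genuinely nontrivial input of \cite{bc1} (resting on Woronowicz's Tannaka--Krein duality) that $\mathrm{Fix}(U^{\epsilon})$ has no invariant vectors \emph{beyond} the span of the $\xi_\pi$, i.e.\ that the representation category of $A_u(n)$ is exactly the ``free'' one indexed by noncrossing $1$--$*$ pairings. Granting this and the invertibility of $G_{\epsilon n}$, the remainder is the elementary linear algebra above; since this is precisely \cite{bc1}, in the paper we simply cite it.
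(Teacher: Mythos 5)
Your argument is correct and is essentially the standard proof of this result: the paper itself gives no proof (the theorem is quoted from \cite{bc1}), and what you reconstruct — $(\mathrm{id}\otimes\psi_n)(U^{\epsilon})$ is the orthogonal projection onto $\mathrm{Fix}(U^{\epsilon})$, the fixed space is spanned by the partition vectors $\xi_\pi$, $\pi \in NC_2^{\epsilon}(2m)$, with Gram matrix $G_{\epsilon n}$, and the projection is $\sum_{\pi,\sigma}W_{\epsilon n}(\pi,\sigma)\,\xi_\pi\xi_\sigma^*$ — is exactly the Banica--Collins argument, with the genuinely nontrivial Tannaka--Krein computation of the intertwiner spaces correctly identified and attributed to \cite{ban1,bc1}.
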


\begin{rmk}
Note that the Weingarten formula above is effective for computing integrals of products
of the entries in $U$ and its conjugate $\overline U$, the matrix with $(i,j)$-entry
$U_{ij}^*$.  We will also need to compute integrals of products of entries from $U$ and
its adjoint $U^*$, whose $(i,j)$-entry we denote $(U^*)_{ij}$ to distinguish from the
conjugate $\overline U$.  To do this we will use the following proposition, which allows
us to reduce to the former case.  Note that such a formula clearly fails for the
classical unitary group.
\end{rmk}

\begin{prop} \label{adjint}
Let $1 \leq i_1,i_2,\dotsc,i_{4m} \leq n$ and $\epsilon_1,\dotsc,\epsilon_{2m} \in
\{1,*\}$.  Then
\begin{equation*}
 \psi_n\bigl((U^{\epsilon_1})_{i_1i_2}(U^{\epsilon_2})_{i_3i_4}\dotsb (U^{\epsilon_{2m}})_{i_{4m-1}i_{4m}}\bigr) = \psi_n\bigl(U_{i_1i_2}^{\epsilon_1}U_{i_4i_3}^{\epsilon_2}\dotsb U_{i_{4m}i_{4m-1}}^{\epsilon_{2m}}\bigr).
\end{equation*}
\end{prop}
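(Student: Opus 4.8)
The plan is to show that adjoint entries $(U^*)_{ij}$ and conjugate entries $\overline{U}_{ij} = U_{ij}^*$ differ, under the Haar state, only by a transposition of the two matrix indices, and that this transposition can be absorbed into a relabeling of the summation indices in the Weingarten formula. First I would recall the basic matrix identity relating the adjoint to the conjugate: since $U^* = \overline{U}^{\,\mathrm{t}}$, we have $(U^*)_{ij} = \overline{U}_{ji} = U_{ji}^*$, and more generally $(U^1)_{ij} = U_{ij} = \overline{U}_{ij}^{*}$... — more to the point, the single clean fact I need is that for $\epsilon \in \{1,*\}$, the entry $(U^\epsilon)_{ij}$ with the adjoint convention equals $U_{ji}^\epsilon$ with the conjugate convention, i.e. $(U)_{ij} = U_{ij}$ and $(U^*)_{ij} = U_{ji}^*$. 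Applying this entry-by-entry to the product on the left-hand side immediately rewrites it as $\psi_n\bigl(U_{i_1i_2}^{\epsilon_1}U_{i_4i_3}^{\epsilon_2}\dotsb U_{i_{4m}i_{4m-1}}^{\epsilon_{2m}}\bigr)$, which is exactly the right-hand side. So in fact the statement is essentially tautological once one unwinds the two conventions for writing the $(i,j)$-entry of a product of $U$'s and $U^*$'s.

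To make this rigorous and avoid hand-waving about conventions, I would argue as follows. Write $(U^{\epsilon_1})_{i_1i_2}\dotsb(U^{\epsilon_{2m}})_{i_{4m-1}i_{4m}}$ using the rule that for a product of matrices $M = M^{(1)}\dotsb M^{(2m)}$ one has $M_{ab} = \sum M^{(1)}_{a c_1} M^{(2)}_{c_1 c_2}\dotsb$; but here each factor is treated as an independent entry, so there is no internal summation — each $(U^{\epsilon_r})_{i_{2r-1}i_{2r}}$ is literally the $(i_{2r-1},i_{2r})$ matrix entry of the single matrix $U^{\epsilon_r} \in M_n(A_u(n))$. For $\epsilon_r = 1$ this is $U_{i_{2r-1}i_{2r}}$; for $\epsilon_r = *$, by definition of the adjoint in $M_n(A_u(n)) = M_n(\C)\otimes A_u(n)$, the $(a,b)$-entry of $U^*$ is the adjoint in $A_u(n)$ of the $(b,a)$-entry of $U$, i.e. $(U^*)_{ab} = (U_{ba})^* = U_{ba}^*$. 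Hence $(U^{\epsilon_r})_{i_{2r-1}i_{2r}} = U_{i_{2r-1}i_{2r}}^{1}$ when $\epsilon_r = 1$ and $= U_{i_{2r}i_{2r-1}}^{*}$ when $\epsilon_r = *$; in both cases this equals $U_{k_r l_r}^{\epsilon_r}$ where $(k_r,l_r) = (i_{2r-1},i_{2r})$ if $\epsilon_r = 1$ and $(k_r,l_r) = (i_{2r},i_{2r-1})$ if $\epsilon_r = *$ — and one checks directly that writing $U_{i_{2r}i_{2r-1}}^{\epsilon_r}$ covers both cases precisely when we also interpret $U_{i_{2r}i_{2r-1}}^{1}$ correctly...

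Actually the cleanest route is: the right-hand side's $r$-th factor is $U_{i_{4r}i_{4r-1}}^{\epsilon_r}$, so I want to verify $(U^{\epsilon_r})_{i_{4r-3}i_{4r-2}} = U_{i_{4r}i_{4r-1}}^{\epsilon_r}$ — but note the index sets on the two sides of the proposition are different ($i_1,\dots,i_{4m}$ appear as consecutive pairs on the left, and as swapped pairs on the right). So the correct statement to prove is just the identity of matrix entries $(U^\epsilon)_{ab} = U_{b'a'}^\epsilon$ under the appropriate relabeling, and then substitute. I would therefore: (i) state and prove the one-line lemma $(U^*)_{ab} = U_{ba}^*$ and $(U)_{ab} = U_{ab}$ in $M_n(A_u(n))$; (ii) substitute factor-by-factor into the product, matching the index conventions on the two sides of the displayed equation; (iii) conclude by linearity of $\psi_n$. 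I do not expect any genuine obstacle here — this proposition is a bookkeeping lemma whose only subtlety is keeping the two index conventions straight, and the remark preceding it already signals that the content is simply that the adjoint-versus-conjugate distinction is invisible to $\psi_n$ after transposing indices. The one thing to be careful about is that this manipulation is purely formal and uses nothing about $A_u(n)$ beyond the definition of the $*$-operation on matrices over it; in particular it is the $*$-structure, not any quantum-group property, that does the work, which is why (as the remark notes) the analogous reduction is unnecessary — indeed meaningless in this form — for the classical unitary group where $U^* $ and $\overline U$ already coincide as the relevant object of study only after a different identification.
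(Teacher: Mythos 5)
There is a genuine gap: the proposition is not the tautology you make it out to be, because you have misread the index pattern on the right-hand side. The tautological rewriting $(U^{\epsilon_r})_{i_{2r-1}i_{2r}} = U_{i_{2r-1}i_{2r}}$ (if $\epsilon_r=1$) or $U_{i_{2r}i_{2r-1}}^*$ (if $\epsilon_r=*$) transposes an index pair exactly at those positions where $\epsilon_r=*$. The right-hand side of the proposition, however, transposes the index pairs at the \emph{even-numbered positions regardless of} $\epsilon_r$ (and leaves odd positions untouched): the factors are $U^{\epsilon_1}_{i_1i_2},U^{\epsilon_2}_{i_4i_3},U^{\epsilon_3}_{i_5i_6},\dotsc$. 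Already for $m=1$, $\epsilon_1=*$, $\epsilon_2=1$, your unwinding of the left side gives $\psi_n(U_{i_2i_1}^*U_{i_3i_4})$ while the claimed right side is $\psi_n(U_{i_1i_2}^*U_{i_4i_3})$ — different monomials in $A_u(n)$, equal only after applying the Haar state. So the real content of the proposition is the invariance statement that $\psi_n(U_{i_1j_1}^{\epsilon_1}\dotsb U_{i_{2m}j_{2m}}^{\epsilon_{2m}})$ is unchanged when one transposes the index pairs $(i_k,j_k)$ at the positions $k$ in the set $B=\{k \text{ even},\ \epsilon_k=1\}\cup\{k \text{ odd},\ \epsilon_k=*\}$; your factor-by-factor identification silently assumes this and never proves it.

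That step cannot be purely formal bookkeeping: if it were, the identical argument would prove the same identity for classical Haar unitary matrices, where it is false — this is precisely what the remark preceding the proposition means by ``such a formula clearly fails for the classical unitary group,'' and it is what Example \ref{counterexample} exploits. The paper's proof supplies the missing ingredient using genuinely quantum input: Banica's result that $(U_{ij})$ has the same $*$-distribution as $(zO_{ij})$ with $z$ a Haar unitary free from the $A_o(n)$ generators $(O_{ij})$, the $R$-diagonality of $z$ forcing every contributing block of the $z$-cumulants to have even size, a noncrossing/parity argument showing that the $O$-cumulants then never couple a position of $A$ with one of $B$, and finally the invariance of the joint distribution of $(O_{ij})$ under transposition, applied blockwise to the positions in $B$. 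You would need to reproduce an argument of this kind (or some other proof of the transposition-invariance over $B$) to close the gap; as written, your proof establishes only the trivial conversion between the $\overline U$- and $U^*$-entry conventions, which does not yield the stated identity.
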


\begin{proof}
We will use the fact from \cite{ban1} that the joint $*$-distribution of $(U_{ij})_{1
\leq i,j \leq n}$ with respect to $\psi_n$ is the same as that of $(zO_{ij})_{1 \leq i,j
\leq n}$, where $z$ and $(O_{ij})$ are random variables in a $*$-probability space
$(M,\tau)$ such that:
\begin{enumerate}
 \item  $z$ is $*$-freely independent from $\{O_{ij}:1 \leq i,j \leq n\}$.
\item $z$ has a Haar unitary distribution.
\item $(O_{ij})$ are self-adjoint, and have the same joint distribution as the generators of the quantum orthogonal group $A_o(n)$.
\end{enumerate}

The joint distribution of $(O_{ij})$ can also be computed via a Weingarten formula, see
\cite{bc1} for details.  The only fact that we will use is that the joint distribution is
invariant under transposition, i.e., the families $(O_{ij})_{1 \leq i,j \leq n}$ and
$(O_{ji})_{1 \leq i,j \leq n}$ have the same joint distribution.

Now let $\epsilon_1,\dotsc,\epsilon_{2m} \in \{1,*\}$.  Let $A = \{j: \text{$j$ is even
and $\epsilon_j = *$}\} \cup \{j: \text{$j$ is odd and $\epsilon_j = 1$}\}$, and $B =
\{1,\dotsc,2m\} \setminus A$.  Let $1 \leq i_1,j_1,\dotsc,i_{2m},j_{2m} \leq n$.  For $1
\leq k \leq 2m$, define
\begin{align*}
 i'_{k} &= \begin{cases} i_k, & k \in A \\ j_k, & k \in B \end{cases}, & j'_k &= \begin{cases} j_k, & k \in A\\ i_k, & k \in B \end{cases}.
\end{align*}
We claim that
\begin{equation*}
 \psi_n\bigl(U_{i_1j_1}^{\epsilon_1}\dotsb U_{i_{2m}j_{2m}}^{\epsilon_{2m}}\bigr) = \psi_n\bigl(U_{i'_1j'_1}^{\epsilon_1}\dotsb U_{i'_{2m}j'_{2m}}^{\epsilon_{2m}}\bigr),
\end{equation*}
from which the formula in the statement follows immediately.

As discussed above, we have
\begin{equation*}
 \psi_n\bigl(U_{i_1j_1}^{\epsilon_1}\dotsb U_{i_{2m}j_{2m}}^{\epsilon_{2m}}\bigr) = \tau\bigl((zO_{i_1j_1})^{\epsilon_1}\dotsb (zO_{i_{2m}j_{2m}})^{\epsilon_{2m}}\bigr).
\end{equation*}
Note that the expression $(zO_{i_1j_1})^{\epsilon_1}\dotsb
(zO_{i_{2m}j_{2m}})^{\epsilon_{2m}}$ can be written as a product of terms of the form
$zO_{i_kj_k}$ or $O_{i_kj_k}z^*$, depending if $\epsilon_k$ is 1 or $*$.  After rewriting
the expression in this form, let $C$ be the subset of $\{1,\dotsc,4m\}$ consisting of
those indices corresponding to $z$ or $z^*$, and let $D$ be its complement.  Explicitly,
if $\epsilon_k = 1$ then $2k-1$ is in $C$ and $2k$ is in $D$, and if $\epsilon_k= *$ then
$2k$ is in $C$ and $2k-1$ is in $D$.  Given partitions $\alpha,\beta \in NC(2m)$, let
$\Theta(\alpha,\beta) \in P(4m)$ be given by partitioning $C$ according to $\alpha$ and
$D$ according to $\beta$.  By freeness, we have
\begin{equation*}
\tau\bigl((zO_{i_1j_1})^{\epsilon_1}\dotsb (zO_{i_{2m}j_{2m}})^{\epsilon_{2m}}\bigr) =
\sum_{\substack{\alpha,\beta \in NC(2m)\\ \Theta(\alpha,\beta) \in NC(4m)}}
\kappa_{\alpha}[z^{\epsilon_1},\dotsc,z^{\epsilon_{2m}}]\kappa_{\beta}[O_{i_1j_1},\dotsc,O_{i_{2m}j_{2m}}].
\end{equation*}

Now since Haar unitaries are $R$-diagonal, we have in particular that
$\kappa_{\alpha}[z^{\epsilon_1},\dotsc,z^{\epsilon_{2m}}] = 0$ unless each block of
$\alpha$ contains an even number of elements.  So assume that $\alpha$ has this property,
we claim that if $\beta$ is such that $\Theta(\alpha,\beta)$ is noncrossing, then $\beta$
does not join any element of $A$ with an element of $B$.  Indeed, suppose that $\beta$
joins $k_1 < k_2$ and that one of $k_1,k_2$ is in $A$ and the other is in $B$.  If
$k_1,k_2$ have the same parity, then it follows that one of
$\epsilon_{k_1},\epsilon_{k_2}$ is a 1 while the other is a $*$.  Suppose that
$\epsilon_{k_1} = 1,\epsilon_{k_2} = *$, the other case is similar.  Then we have $2k_1$
connected to $2k_2-1$ in $\Theta(\alpha,\beta)$.  Since $\Theta(\alpha,\beta)$ is
noncrossing, $\alpha$ cannot join any element of $\{k_1+1,\dotsc,k_2-1\}$ to an element
outside of this set.  But since this set contains an odd number of elements, we obtain a
contradiction to the choice of $\alpha$.

If $k_1,k_2$ have different parity, then it follows that $\epsilon_{k_1}=\epsilon_{k_2}$.
Suppose that $\epsilon_{k_1}=\epsilon_{k_2} = 1$, the other case is similar.  Then $2k_1$
is connected to $2k_2$ in $\Theta(\alpha,\beta)$.  It follows that $\alpha$ cannot
connect any element of $\{k_1+1,\dotsc,k_2\}$ to an element outside of this set, and
again this set has an odd number of elements which contradicts the choice of $\alpha$.

So the only nonzero terms appearing in the expression above come from $\beta \in NC(2m)$
which split into noncrossing partitions $\pi$ of $A$ and $\sigma$ of $B$.  In this case,
if $A = (a_1 < \dotsb < a_s)$ and $B = (b_1 < \dotsb < b_r)$, we have
\begin{align*}
 \kappa_{\beta}[O_{i_1j_1},\dotsc,O_{i_{2m}j_{2m}}] &= \kappa_{\pi}[O_{i_{a_1}j_{a_1}},\dotsc,O_{i_{a_s}j_{a_s}}]\kappa_{\sigma}[O_{i_{b_1}j_{b_1}},\dotsc,O_{i_{b_r}j_{b_r}}]\\&= \kappa_{\pi}[O_{i_{a_1}j_{a_1}},\dotsc,O_{i_{a_s}j_{a_s}}]\kappa_{\sigma}[O_{j_{b_1}i_{b_1}},\dotsc,O_{j_{b_r}i_{b_r}}]\\&= \kappa_{\beta}[O_{i'_1j'_1},\dotsc,O_{i'_{2m}j'_{2m}}],
\end{align*}
where we have used the invariance of the distribution of $(O_{ij})$ under transposition.

Putting this all together, we have
\begin{align*}
 \psi_n\bigl(U_{i_1j_1}^{\epsilon_1}\dotsb U_{i_{2m}j_{2m}}^{\epsilon_{2m}}\bigr) &= \tau\bigl((zO_{i_1j_1})^{\epsilon_1}\dotsb (zO_{i_{2m}j_{2m}})^{\epsilon_{2m}}\bigr)\\
&= \sum_{\substack{\alpha,\beta \in NC(2m)\\ \Theta(\alpha,\beta) \in NC(4m)}} \kappa_{\alpha}[z^{\epsilon_1},\dotsc,z^{\epsilon_{2m}}]\kappa_{\beta}[O_{i_1j_1},\dotsc,O_{i_{2m}j_{2m}}]\\
&= \sum_{\substack{\alpha,\beta \in NC(2m)\\ \Theta(\alpha,\beta) \in NC(4m)}} \kappa_{\alpha}[z^{\epsilon_1},\dotsc,z^{\epsilon_{2m}}]\kappa_{\beta}[O_{i'_1j'_1},\dotsc,O_{i'_{2m}j'_{2m}}]\\
&= \tau\bigl((zO_{i'_1j'_1})^{\epsilon_1}\dotsb (zO_{i'_{2m}j'_{2m}})^{\epsilon_{2m}}\bigr)\\
&= \psi_n\bigl(U_{i'_1j'_1}^{\epsilon_1}\dotsb U_{i'_{2m}j'_{2m}}^{\epsilon_{2m}}\bigr)
\end{align*}
as desired.

\end{proof}

We can now extend this result to the free product $A_u(n)^{*\infty}$.

\begin{cor}\label{freeadjint}
Let $l_1,\dotsc,l_{2m} \in \N$, $\epsilon_1,\dotsc,\epsilon_{2m} \in \{1,*\}$ and $1 \leq
i_1,j_1,\dotsc,i_{2m},j_{2m} \leq n$.  In $A_u(n)^{*\infty}$, we have
\begin{equation*}
 \psi_n^{*\infty}\bigl((U(l_1)^{\epsilon_1})_{i_1i_2}(U(l_2)^{\epsilon_2})_{i_3i_4}\dotsb (U(l_{2m})^{\epsilon_{2m}})_{i_{4m-1}i_{4m}}\bigr) = \psi_n^{*\infty}\bigl(U(l_1)_{i_1i_2}^{\epsilon_1}U(l_2)_{i_4i_3}^{\epsilon_2}\dotsb U(l_{2m})_{i_{4m}i_{4m-1}}^{\epsilon_{2m}}\bigr).
\end{equation*}

\end{cor}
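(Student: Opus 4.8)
The plan is to adapt the proof of Proposition \ref{adjint} to the free product, carrying the copy-labels $l_1,\dotsc,l_{2m}$ along throughout; the only new feature will be that two families of cumulants acquire an extra vanishing condition, forcing their blocks to be ``monochromatic'' in these labels, and neither condition interferes with the combinatorial core of the argument.

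First I would build a model for the free product. By \cite{ban1}, for each $l$ the joint $*$-distribution of $(U(l)_{ij})_{1\leq i,j\leq n}$ with respect to $\psi_n^{*\infty}$ (equivalently $\psi_n$, by property (2) of $A_u(n)^{*\infty}$) coincides with that of $(z_lO(l)_{ij})_{1\leq i,j\leq n}$, where in a $*$-probability space $(M_l,\tau_l)$ the element $z_l$ is a Haar unitary $*$-free from a self-adjoint family $(O(l)_{ij})$ distributed as the generators of $A_o(n)$. Passing to the free product $(M,\tau)=*_l(M_l,\tau_l)$ and using that the joint $*$-distribution of a free family is determined by the individual distributions together with freeness, I get that $(U(l)_{ij})_{l,i,j}$ in $(A_u(n)^{*\infty},\psi_n^{*\infty})$ has the same joint $*$-distribution as $(z_lO(l)_{ij})_{l,i,j}$ in $(M,\tau)$, where now the families attached to distinct $l$ are all free; for fixed $l$, $z_l$ is free from $(O(l)_{ij})$; each $z_l$ is $R$-diagonal; and each $(O(l)_{ij})$ remains invariant under transposition, so that the entire family $(O(l)_{ij})_{l,i,j}$ is invariant under transposing all of the $O(l)$ simultaneously.

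Then, exactly as in the proof of Proposition \ref{adjint}, it suffices to prove the label-respecting symmetry
\[
\psi_n^{*\infty}\bigl(U(l_1)_{i_1j_1}^{\epsilon_1}\dotsb U(l_{2m})_{i_{2m}j_{2m}}^{\epsilon_{2m}}\bigr)=\psi_n^{*\infty}\bigl(U(l_1)_{i'_1j'_1}^{\epsilon_1}\dotsb U(l_{2m})_{i'_{2m}j'_{2m}}^{\epsilon_{2m}}\bigr),
\]
with the sets $A,B\subseteq\{1,\dotsc,2m\}$ and the modified indices $i'_k,j'_k$ defined precisely as there; the statement of the corollary then follows by the same index-juggling used to deduce Proposition \ref{adjint} from its internal claim. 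To prove the displayed identity I would substitute the model, write each $(z_{l_k}O(l_k)_{i_kj_k})^{\epsilon_k}$ as $z_{l_k}O(l_k)_{i_kj_k}$ or $O(l_k)_{i_kj_k}z_{l_k}^{*}$ according to $\epsilon_k$, and expand by the freeness of $\{z_l\}$ from $\{O(l)_{ij}\}$, obtaining $\sum_{\Theta(\alpha,\beta)\in NC(4m)}\kappa_\alpha[z_{l_1}^{\epsilon_1},\dotsc,z_{l_{2m}}^{\epsilon_{2m}}]\,\kappa_\beta[O(l_1)_{i_1j_1},\dotsc,O(l_{2m})_{i_{2m}j_{2m}}]$ with the same slot-sets $C,D$ and partition $\Theta$ as in Proposition \ref{adjint} (they depend on the $\epsilon_k$ only). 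The extra input from the free product is that $\kappa_\alpha$ vanishes unless $\alpha\leq\ker\mathbf l$ and $\kappa_\beta$ vanishes unless $\beta\leq\ker\mathbf l$. The first condition is irrelevant: $R$-diagonality of $z_l$ already forces every block of a surviving $\alpha$ to have even size, and that is the only property of $\alpha$ used in the combinatorial step showing that any $\beta$ with $\Theta(\alpha,\beta)$ noncrossing cannot connect $A$ with $B$. For such $\beta$ one then has $\kappa_\beta[O(l_1)_{i_1j_1},\dotsc]=\kappa_\beta[O(l_1)_{i'_1j'_1},\dotsc]$ by the simultaneous transposition invariance of the $O(l)$'s, which manifestly respects the partition $\ker\mathbf l$, so the second condition is harmless as well; resumming then gives $\tau\bigl((z_{l_1}O(l_1)_{i'_1j'_1})^{\epsilon_1}\dotsb\bigr)=\psi_n^{*\infty}(U(l_1)_{i'_1j'_1}^{\epsilon_1}\dotsb)$, as needed.

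The one point requiring care — and essentially the only place where something could go wrong — is the verification that the combinatorial lemma inside the proof of Proposition \ref{adjint} really used nothing about $\alpha$ beyond evenness of its blocks, and nothing about $\beta$ that the label refinement could disturb; granting that (a matter of re-reading that proof with the labels attached), the corollary follows with no genuinely new difficulty.
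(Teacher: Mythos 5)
Your argument is correct, but it is organized differently from the paper's proof. The paper does not revisit the internal proof of Proposition \ref{adjint}; instead it first establishes a single-copy cumulant identity in $(A_u(n),\psi_n)$, namely
$\kappa^{(2m)}[(U^{\epsilon_1})_{i_1i_2},\dotsc,(U^{\epsilon_{2m}})_{i_{4m-1}i_{4m}}]=\kappa^{(2m)}[U^{\epsilon_1}_{i_1i_2},U^{\epsilon_2}_{i_4i_3},\dotsc,U^{\epsilon_{2m}}_{i_{4m}i_{4m-1}}]$,
proved by M\"obius inversion: Theorem \ref{weingarten} forces only partitions with even blocks to contribute, and each block is handled by applying the statement of Proposition \ref{adjint} together with transposition invariance of the joint $*$-distribution of $(U_{ij})$. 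Then, in $A_u(n)^{*\infty}$, the moment is expanded into cumulants indexed by $\sigma\leq\ker\mathbf l$, and the single-copy identity is applied block by block. You instead lift Banica's model to the free product, taking free copies $z_lO(l)_{ij}$, and rerun the two-family cumulant expansion from the proof of Proposition \ref{adjint} globally, noting that the extra constraints $\alpha\leq\ker\mathbf l$ and $\beta\leq\ker\mathbf l$ only remove terms symmetrically on both sides, and that the combinatorial step (a $\beta$ coupling $A$ to $B$ is incompatible with $\Theta(\alpha,\beta)$ noncrossing) uses nothing about $\alpha$ beyond evenness of its blocks, which is indeed the case. Your route avoids the intermediate cumulant identity and the blockwise parity bookkeeping; its cost is that the model must be justified at the free-product level: that the whole family $\{z_l\}_l$ is $*$-free from $\{O(l)_{ij}\}_{l,i,j}$ (equivalently, that all mixed $z$-$O$ cumulants vanish, within a copy by the copy-wise freeness and across copies by freeness of the copies, or by re-bracketing the free product), and that $(O(l)_{ij})_{l,i,j}$ is invariant under simultaneous transposition, which follows from copy-wise invariance plus freeness. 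You assert both; they are standard facts, but they are the two points that should be spelled out to make the adaptation complete.
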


\begin{proof}
First we claim that in $A_u(n)$, we have
\begin{equation*}
\kappa^{(2m)}[(U^{\epsilon_1})_{i_1i_2},(U^{\epsilon_2})_{i_3i_4},\dotsc,(U^{\epsilon_{2m}})_{i_{4m-1}i_{4m}}]
=
\kappa^{(2m)}[U_{i_1i_2}^{\epsilon_1},U_{i_4i_3}^{\epsilon_2},\dotsc,U_{i_{4m}i_{4m-1}}^{\epsilon_{2m}}].
\end{equation*}
(Note that any cumulant of odd length is zero by Theorem \ref{weingarten}).

Indeed, we have
\begin{multline*}
\kappa^{(2m)}[(U^{\epsilon_1})_{i_1i_2},(U^{\epsilon_2})_{i_3i_4},\dotsc,(U^{\epsilon_{2m}})_{i_{4m-1}i_{4m}}]\\
 = \sum_{\sigma \in NC(2m)} \mu_{2m}(\sigma,1_{2m})\prod_{V \in \sigma} \psi_n(V)[(U^{\epsilon_1})_{i_1i_2},(U^{\epsilon_2})_{i_3i_4},\dotsc,(U^{\epsilon_{2m}})_{i_{4m-1}i_{4m}}].
\end{multline*}
Now it is clear from Theorem \ref{weingarten} that
\begin{equation*}
 \psi_n(V)[(U^{\epsilon_1})_{i_1i_2},(U^{\epsilon_2})_{i_3i_4},\dotsc,(U^{\epsilon_{2m}})_{i_{4m-1}i_{4m}}] = 0
\end{equation*}
unless $V$ has an even number of elements.  So the nonzero terms in the expression above
come from those $\sigma \in NC(2m)$ for which every block has as even number of elements.
For such a $\sigma$, the noncrossing condition implies that each block $V = (l_1 < \dotsb
< l_s)$ must be alternating in parity.  By Proposition \ref{adjint} we have
\begin{align*}
 \psi_n(V)[(U^{\epsilon_1})_{i_1i_2},(U^{\epsilon_2})_{i_3i_4},\dotsc,(U^{\epsilon_{2m}})_{i_{4m-1}i_{4m}}] &= \psi_n\bigl((U^{\epsilon_{l_1}})_{i_{2l_1-1}i_{2l_1}}(U^{\epsilon_{l_2}})_{i_{2l_2-1}i_{2l_2}}\dotsb(U^{\epsilon_{l_s}})_{i_{2l_s-1}i_{2l_s}}\bigr)\\
&= \psi_n\bigl(U^{\epsilon_{l_1}}_{i_{2l_1-1}i_{2l_1}}U^{\epsilon_{l_2}}_{i_{2l_2}i_{2l_2-1}}\dotsb U^{\epsilon_{l_s}}_{i_{2l_s}i_{2l_s-1}}\bigr)\\
&=
\psi_n\bigl(U^{\epsilon_{l_1}}_{i_{2l_1}i_{2l_1-1}}U^{\epsilon_{l_2}}_{i_{2l_2-1}i_{2l_2}}\dotsb
U^{\epsilon_{l_s}}_{i_{2l_s-1}i_{2l_s}}\bigr),
\end{align*}
where the last equation follows from the invariance of the joint $*$-distribution of
$(U_{ij})$ under transposition.  It follows that
\begin{multline*}
\kappa^{(2m)}[(U^{\epsilon_1})_{i_1i_2},(U^{\epsilon_2})_{i_3i_4},\dotsc,(U^{\epsilon_{2m}})_{i_{4m-1}i_{4m}}]\\
 = \sum_{\sigma \in NC(2m)} \mu_{2m}(\sigma,1_{2m})\prod_{V \in \sigma} \psi_n(V)[(U^{\epsilon_1})_{i_1i_2},(U^{\epsilon_2})_{i_3i_4},\dotsc,(U^{\epsilon_{2m}})_{i_{4m-1}i_{4m}}]\\
= \sum_{\sigma \in NC(2m)} \mu_{2m}(\sigma,1_{2m})\prod_{V \in \sigma} \psi_n(V)[U^{\epsilon_1}_{i_1i_2},U^{\epsilon_2}_{i_4i_3},\dotsc,U^{\epsilon_{2m}}_{i_{4m}i_{4m-1}}]\\
=
\kappa^{(2m)}[U_{i_1i_2}^{\epsilon_1},U_{i_4i_3}^{\epsilon_2},\dotsc,U_{i_{4m}i_{4m-1}}^{\epsilon_{2m}}]
\end{multline*}
as claimed.

Now by free independence, in $A_u(n)^{*\infty}$ we have
\begin{multline*}
\psi_n^{*\infty}\bigl((U(l_1)^{\epsilon_1})_{i_1i_2}(U(l_2)^{\epsilon_2})_{i_3i_4}\dotsb (U(l_{2m})^{\epsilon_{2m}})_{i_{4m-1}i_{4m}}\bigr) \\
= \sum_{\substack{\sigma \in NC(2m)\\ \sigma \leq \ker \mathbf l}} \prod_{V \in \sigma}
\kappa(V)[(U(l_1)^{\epsilon_1})_{i_1i_2},(U(l_2)^{\epsilon_2})_{i_3i_4},\dotsc,(U(l_{2m})^{\epsilon_{2m}})_{i_{4m-1}i_{4m}}].
 \end{multline*}
Since $\kappa(V)$ is zero unless $V$ has an even number of elements, the only terms which
contribute to the sum above come again from $\sigma \in NC(2m)$ for which each block has
an even number of elements.  From the previous claim, we have
\begin{equation*}
 \kappa(V)[(U(l_1)^{\epsilon_1})_{i_1i_2},(U(l_2)^{\epsilon_2})_{i_3i_4},\dotsc,(U(l_{2m})^{\epsilon_{2m}})_{i_{4m-1}i_{4m}}] = \kappa(V)[U(l_1)^{\epsilon_1}_{i_1i_2},U(l_2)^{\epsilon_2}_{i_4i_3},\dotsc,U(l_{2m})^{\epsilon_{2m}}_{i_{4m}i_{4m-1}}]
\end{equation*}
for each block $V \in \sigma$, and the result follows immediately.

\end{proof}

\begin{rmk*}
We will now give an estimate on the asymptotic behavior of the entries of $W_{\epsilon
n}$ as $n \to \infty$.  This improves the estimate given in \cite{bc1}.  Note that by
taking $\epsilon = 1*\dotsb 1*$, this estimate also applies to the quantum orthogonal
group, see \cite{bc1}.
\end{rmk*}

\begin{thm}\label{west}
Let $\epsilon_1,\dotsc,\epsilon_{2m} \in \{1,*\}$.  Let $\pi,\sigma \in
NC^{\epsilon}(m)$.  Then
\begin{equation*}
 W_{\epsilon n}(\widetilde \pi,\widetilde \sigma) = O(n^{2|\pi \vee \sigma|-|\pi|-|\sigma|-m}).
\end{equation*}
Moreover,
\begin{equation*}
 n^{m + |\sigma| - |\pi|} W_{\epsilon n}(\widetilde \pi,\widetilde \sigma) = \mu_{m}(\sigma,\pi) + O(n^{-2}),
\end{equation*}
where $\mu_m$ is the M\"{o}bius function on $NC(m)$.

\end{thm}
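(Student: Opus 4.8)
The plan is to estimate the inverse Weingarten matrix $W_{\epsilon n}$ by relating it, via the fattening bijection, to a matrix indexed by $NC(m)$ rather than $NC_2^\epsilon(2m)$, and then inverting the resulting Gram matrix by a M\"obius-type argument. First I would restrict the Gram matrix $G_{\epsilon n}$ to the subset $\{\widetilde\pi : \pi \in NC^\epsilon(m)\} \subset NC_2^\epsilon(2m)$; actually one must be careful here, since $W_{\epsilon n}$ is the inverse of the \emph{full} Gram matrix, so I would instead argue that the relevant block structure decouples. The key input is Lemma~\ref{nch}: the pairings $\pi,\sigma \in NC_2^\epsilon(2m)$ with $\pi \vee \sigma \in NC_h^\epsilon(2m)$ are exactly those of the form $\widetilde\alpha, \widetilde\beta$ with $\alpha,\beta \in NC^\epsilon(m)$; combined with Theorem~\ref{linearization}, $|\widetilde\alpha \vee \widetilde\beta| = m + 2|\alpha\vee\beta| - |\alpha| - |\beta|$, so that $G_{\epsilon n}(\widetilde\alpha,\widetilde\beta) = n^{m + 2|\alpha\vee\beta| - |\alpha| - |\beta|}$. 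The idea is that after conjugating by the diagonal matrix $\mathrm{diag}(n^{-|\alpha|})$ (or an appropriate power), this matrix looks, to leading order, like $n^m$ times the zeta-type matrix $\zeta_m$ on $NC(m)$ given by $[\alpha \leq \beta]$ plus the transpose, whose ``inverse'' in the appropriate sense involves the M\"obius function $\mu_m$.

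The main technical step is to set up an explicit approximate inverse and verify it. Concretely, I would define a candidate matrix $\widetilde W_{\epsilon n}(\widetilde\pi,\widetilde\sigma) := n^{-m - |\sigma| + |\pi|}(\mu_m(\sigma,\pi) + \text{correction})$ and check that $\sum_{\widetilde\tau} G_{\epsilon n}(\widetilde\pi,\widetilde\tau)\widetilde W_{\epsilon n}(\widetilde\tau,\widetilde\sigma) = \delta_{\pi\sigma} + O(n^{-2})$; then an elementary perturbation argument (the Gram matrix is invertible for $n \geq 2$ by \cite{bc1}, and the error is uniformly small in operator norm after suitable rescaling) upgrades this to the stated asymptotics for the true inverse. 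The summation $\sum_\tau n^{|\widetilde\pi \vee \widetilde\tau| + |\widetilde\tau \vee \widetilde\sigma| - \ldots}$ is dominated, by the linearization formula, by terms where $\tau$ sits between $\sigma$ and $\pi$ in $NC(m)$, and over that interval the leading term produces exactly the defining relation $\sum_{\sigma \leq \tau \leq \pi}\mu_m(\sigma,\tau) = \delta_{\sigma\pi}$ of the M\"obius function; the subleading terms, coming from $\tau$ not comparable to both or from the $2|\alpha\vee\beta|$ discrepancy, are $O(n^{-2})$ because the exponent drops by an even integer (parity of $|\widetilde\pi\vee\widetilde\tau|$ relative to $m + |\pi| - |\tau|$ is controlled by Theorem~\ref{linearization}).

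The first assertion, $W_{\epsilon n}(\widetilde\pi,\widetilde\sigma) = O(n^{2|\pi\vee\sigma| - |\pi| - |\sigma| - m})$, I expect to follow either as a byproduct of the same computation or, more robustly, by the standard estimate on inverses of such Gram matrices (as in \cite{bc1}) combined with the linearization formula to rewrite the exponent; I would state it first and then refine to get the $\mu_m(\sigma,\pi) + O(n^{-2})$ precision. The hard part will be bookkeeping the parity/degree estimates to make sure every error term is genuinely $O(n^{-2})$ and not merely $O(n^{-1})$ — this is exactly where Theorem~\ref{linearization} is essential, since $|\widetilde\pi\vee\widetilde\sigma| - (m + |\pi| - |\sigma|) = 2(|\pi\vee\sigma| - |\pi|)$ is always even and nonpositive, giving the needed gap of $2$ rather than $1$. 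A secondary subtlety is justifying that the block of the inverse indexed by fattened partitions is governed only by that block, i.e. that off-block entries of $W_{\epsilon n}$ between a fattened and a non-fattened pairing do not contaminate the estimate; I would handle this by noting $NC_h^\epsilon(2m) = \{\widetilde\pi \vee \widetilde\sigma : \sigma \leq \pi \text{ in } NC^\epsilon(m)\}$ so that $G_{\epsilon n}$ has a clean combinatorial structure on all of $NC_2^\epsilon(2m)$ and the perturbative inversion can be carried out globally.
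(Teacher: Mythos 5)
Your overall strategy (rescale by the diagonal, produce the M\"obius function from chain/interval sums, use the Kodiyalam--Sunder linearization formula to get an even gap) is in the same spirit as the paper, but as written the central quantitative step has a genuine gap. The paper does not use an ``approximate inverse plus perturbation'' scheme: it writes $G_{\epsilon n}=\Theta_{\epsilon n}^{1/2}(1+B_{\epsilon n})\Theta_{\epsilon n}^{1/2}$ and expands $(1+B_{\epsilon n})^{-1}$ as a geometric series, bounding \emph{every} term entrywise; the gap of $2$ comes from the factor $2$ in Theorem~\ref{linearization} applied to each consecutive join along a chain $\pi,\nu_1,\dotsc,\nu_l,\sigma$, so that any failure of $\sigma<\nu_l<\dotsb<\nu_1<\pi$ costs at least $2$ in the exponent, and the alternating chain count is identified with $\mu_m(\sigma,\pi)$ using the lemma that $NC^{\epsilon}(m)$ is closed under taking intervals in $NC(m)$ (a fact you never invoke, but which is needed: your relation $\sum_{\sigma\leq\tau\leq\pi}\mu_m(\sigma,\tau)=\delta_{\sigma\pi}$ only holds for the sum over $\tau\in NC^{\epsilon}(m)$ because the whole $NC(m)$-interval lies in $NC^{\epsilon}(m)$). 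Your claimed verification, by contrast, fails as stated: with the naive candidate $\widetilde W(\widetilde\tau,\widetilde\sigma)=n^{-m-|\sigma|+|\tau|}\mu_m(\sigma,\tau)$, the $\tau=\sigma$ term alone contributes $n^{2|\pi\vee\sigma|-|\pi|-|\sigma|}$ to $(G_{\epsilon n}\widetilde W)(\widetilde\pi,\widetilde\sigma)$, which already for $m=2$, $\pi=0_2$, $\sigma=1_2$ equals $n^{-1}$, not $O(n^{-2})$; so your parity claim ``the exponent drops by an even integer'' is false in the form you use it (exponents of the shape $2|\pi\vee\tau|-|\pi|-|\sigma|$ can drop by odd amounts). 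Moreover, even granting $G_{\epsilon n}\widetilde W=I+E$ with $E$ small in operator norm, passing from this to \emph{entrywise} asymptotics of $W_{\epsilon n}$ at the entry-dependent scales $n^{-m-|\sigma|+|\pi|}$ is exactly the nontrivial bookkeeping you defer; it requires the same kind of exponent estimates the paper carries out term by term, so the ``elementary perturbation argument'' is not elementary and is where the proof actually lives.

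Two smaller points. First, your worry about off-block contamination between ``fattened and non-fattened pairings'' addresses a non-issue: fattening is a bijection of $NC(m)$ onto $NC_2(2m)$, and by definition $NC^{\epsilon}(m)$ is the preimage of $NC_2^{\epsilon}(2m)$, so every index of the Gram matrix is of the form $\widetilde\nu$ with $\nu\in NC^{\epsilon}(m)$ and there is no block decomposition to justify. Second, for the first assertion you cannot simply quote ``the standard estimate as in \cite{bc1}'': the remark preceding the theorem notes that this bound \emph{improves} the estimate of \cite{bc1}; in the paper it is obtained inside the same series expansion via the semimodularity inequality $|\widetilde\pi\vee\widetilde\nu|+|\widetilde\nu\vee\widetilde\sigma|\leq|\widetilde\pi\vee\widetilde\sigma|+|\widetilde\nu|$, a step absent from your outline.
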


\begin{proof}
We use a standard method from \cite{col1,colsn1}, further developed in
\cite{bc1,bc2,cur3,bcs2}.

First observe that
\begin{equation*}
 G_{\epsilon n} = \Theta_{\epsilon n}^{1/2}(1 + B_{\epsilon n})\Theta_{\epsilon n}^{1/2},
\end{equation*}
where
\begin{align*}
 \Theta_{\epsilon n}(\pi,\sigma) &= \begin{cases} n^{m}, & \pi = \sigma\\ 0, & \pi \neq \sigma \end{cases}, \\
B_{\epsilon n}(\pi,\sigma) &= \begin{cases} 0, & \pi = \sigma\\ n^{|\pi \vee \sigma| -
m}, & \pi \neq \sigma \end{cases}.
\end{align*}
Note that the entries of $B_{\epsilon n}$ are $O(n^{-1})$, in particular for $n$ large we
have the geometric series expansion
\begin{equation*}
 (1+ B_{\epsilon n})^{-1} = 1 - B_{\epsilon n} + \sum_{l \geq 1} (-1)^{l+1}B_{\epsilon n}^{l+1}.
\end{equation*}
Hence
\begin{equation*}
 W_{\epsilon n}(\widetilde \pi,\widetilde \sigma) = \sum_{l \geq 1} (-1)^{(l+1)} (\Theta_{\epsilon n}^{-1/2}B_{\epsilon n}^{l+1}\Theta_{\epsilon n}^{-1/2})(\widetilde \pi,\widetilde \sigma) + \begin{cases} n^{-m} & \pi = \sigma,\\ -n^{|\widetilde \pi \vee \widetilde \sigma| - 2m} & \pi \neq \sigma. \end{cases}
\end{equation*}
Now for $l \geq 1$, we have
\begin{equation*}
 (\Theta_{\epsilon n}^{-1/2}B_{\epsilon n}^{l+1}\Theta_{\epsilon n}^{-1/2})(\widetilde \pi, \widetilde \sigma) = \sum_{\substack{\nu_1,\dotsc,\nu_l \in NC^{\epsilon}(m)\\ \pi \neq \nu_1 \neq \dotsb \neq \nu_l \neq \sigma}} n^{|\widetilde \pi \vee \widetilde \nu_1| + |\widetilde \nu_1 \vee \widetilde \nu_2| + \dotsb + |\widetilde \nu_l \vee \widetilde \sigma| - (l+2)m}.
\end{equation*}
Now we claim that
\begin{align*}
 |\widetilde \pi \vee \widetilde \nu_1| + \dotsb + |\widetilde \nu_l \vee \widetilde \sigma| &\leq |\widetilde \pi \vee \widetilde \sigma| + |\widetilde \nu_1| + \dotsb + |\widetilde \nu_l|\\
&\leq |\widetilde \pi \vee \widetilde \sigma| + l\cdot m,
\end{align*}
from which (i) follows from the above equation and Theorem \ref{linearization}.

Indeed, the case $l = 1$ follows from the semi-modular condition:
\begin{align*}
|\widetilde \pi \vee \widetilde \nu_1| + |\widetilde \nu_1 \vee \widetilde \sigma| &\leq |(\widetilde \pi \vee \widetilde \nu_1)\vee (\widetilde \nu_1 \vee \widetilde \sigma)| + |(\widetilde \pi \vee \widetilde \nu_1) \wedge (\widetilde \nu_1 \vee \widetilde \sigma)|\\
&\leq |\widetilde \pi \vee \widetilde \sigma| + |\widetilde \nu_1|\\
&= |\widetilde \pi \vee \widetilde \sigma| + m.
\end{align*}
The general case follows easily from induction on $l$.

 For (ii), apply Theorem \ref{linearization} to find that
\begin{align*}
 |\widetilde \pi \vee \widetilde \nu_1| + \dotsb + |\widetilde \nu_l \vee \widetilde \sigma| &= 2(|\nu_1 \vee \nu_2| + \dotsb + |\nu_l \vee \sigma| - |\nu_1| - \dotsb - |\nu_l|) +2|\pi \vee \nu_1| - |\pi| - |\sigma| + (l+1)m\\
&\leq |\pi| - |\sigma| + (l+1)m,
\end{align*}
where equality holds if $\sigma < \nu_l < \dotsb < \nu_1 < \pi$ and otherwise the
difference is at least 2.  It then follows from the equation above that  $n^{m + |\sigma|
- |\pi|}W_{\epsilon n}(\widetilde \pi,\widetilde \sigma)$ is equal to
\begin{equation*}
\begin{cases} 0, & \sigma \not \leq \pi\\ 1, & \pi = \sigma\\ -1 + \sum_{l=1}^\infty (-1)^{l+1} |\{(\nu_1,\dotsc,\nu_l) \in (NC^{\epsilon}(m))^l: \sigma < \nu_l < \dotsb < \nu_1 < \pi\}|, & \sigma < \pi \end{cases},
\end{equation*}
up to $O(n^{-2})$.  Since $NC^{\epsilon}(m)$ is closed under taking intervals in $NC(m)$,
this is equal to $\mu_m(\sigma,\pi)$.

\end{proof}

As a corollary, we can give an estimate on the free cumulants of the generators $U_{ij}$
of $A_u(n)$.  (Note that the cumulants of odd length are all zero since the generators
have an even joint distribution).
\begin{cor}
Let $\epsilon_1,\dotsc,\epsilon_{2m} \in \{1,*\}$ and $i_1,j_1,\dotsc,i_{2m},j_{2m} \in
\N$.  For $\omega \in NC(2m)$, we have for the moment functions
\begin{equation*}
 \psi_n^{(\omega)}[U_{i_1j_1}^{\epsilon_1},\dotsc,U_{i_{2m}j_{2m}}^{\epsilon_{2m}}]
 =  \sum_{\substack{\sigma,\pi \in NC^{\epsilon}(m)\\
 \widetilde \pi \leq \ker \mathbf i \wedge \omega \\ \widetilde \sigma
 \leq \ker \mathbf j \wedge \omega}} n^{|\pi|-|\sigma|-m}(\mu_{m}(\sigma,\pi) +
 O(n^{-2})),
\end{equation*}
and for the cumulant functions
\begin{equation*}
 \kappa^{(\omega)}[U_{i_1j_1}^{\epsilon_1},\dotsc, U_{i_{2m}j_{2m}}^{\epsilon_{2m}}]
 = \sum_{\substack{\pi,\sigma \in NC^{\epsilon}(m)\\
 \widetilde \pi \leq \ker \mathbf i\\ \widetilde \sigma \leq \ker \mathbf j\\
 \widetilde \pi \vee_{nc} \widetilde \sigma = \omega}} n^{|\pi|-|\sigma|-m}
 (\mu_{m}(\pi,\sigma) + O(n^{-2})).
\end{equation*}
\end{cor}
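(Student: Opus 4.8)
The plan is to derive the moment-function formula from the Weingarten formula (Theorem~\ref{weingarten}) together with the estimate of Theorem~\ref{west}, and then to obtain the cumulant-function formula from it by M\"obius inversion on $NC(2m)$.

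For the moment functions I would first treat $\omega=1_{2m}$: there $\psi_n^{(1_{2m})}[U_{i_1j_1}^{\epsilon_1},\dotsc,U_{i_{2m}j_{2m}}^{\epsilon_{2m}}]=\psi_n(U_{i_1j_1}^{\epsilon_1}\dotsb U_{i_{2m}j_{2m}}^{\epsilon_{2m}})$, which Theorem~\ref{weingarten} expresses as $\sum W_{\epsilon n}(p,q)$ over $p,q\in NC_2^\epsilon(2m)$ with $p\le\ker\mathbf i$, $q\le\ker\mathbf j$; writing $p=\widetilde\pi$, $q=\widetilde\sigma$ through the bijection $NC^\epsilon(m)\to NC_2^\epsilon(2m)$ and inserting the expansion of $W_{\epsilon n}(\widetilde\pi,\widetilde\sigma)$ from Theorem~\ref{west} gives the asserted identity (note $\ker\mathbf i\wedge 1_{2m}=\ker\mathbf i$). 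For general $\omega$ I would use multiplicativity of the scalar moment functions, $\psi_n^{(\omega)}=\prod_{V\in\omega}\psi_n(V)$: a block $V$ with $|V|$ odd or with $\epsilon|_V$ unbalanced contributes $0$, and in that case there is no $\pi\in NC^\epsilon(m)$ with $\widetilde\pi\le\omega$, so both sides vanish; when every block is even I apply Theorems~\ref{weingarten} and~\ref{west} to each factor $\psi_n(V)$ separately. The resulting families $(\pi_V)_{V\in\omega}$, with $\pi_V\in NC^{\epsilon|_V}(|V|/2)$, correspond bijectively to the $\pi\in NC(m)$ with $\widetilde\pi\le\omega$ via $\widetilde\pi=\bigcup_V\widetilde{\pi_V}$, and under this correspondence the block conditions $\widetilde{\pi_V}\le(\ker\mathbf i)|_V$ assemble, using $\widetilde\pi\le\omega$, into $\widetilde\pi\le\ker\mathbf i\wedge\omega$ (and likewise for $\sigma$). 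What then remains is to check that the product of the block-wise contributions collapses to the single global term $n^{|\pi|-|\sigma|-m}(\mu_m(\sigma,\pi)+O(n^{-2}))$; this rests on three combinatorial points, to be proved with Lemmas~\ref{kreweras}, \ref{intertwine}, \ref{nch} and the linearization identity (Theorem~\ref{linearization}): (i) $\{\pi\in NC(m):\widetilde\pi\le\omega\}$ is an interval of $NC(m)$, and $\pi\mapsto(\pi_V)_V$ is an order isomorphism of it onto $\prod_{V\in\omega}NC(|V|/2)$; (ii) hence $\mu_m(\sigma,\pi)=\prod_V\mu_{|V|/2}(\sigma_V,\pi_V)$ for $\sigma\le\pi$ in that interval; (iii) $\sum_V(|\pi_V|-|\sigma_V|-|V|/2)=|\pi|-|\sigma|-m$, which holds because $\sum_V|\pi_V|-|\pi|$ equals the rank in $NC(m)$ of the bottom element of that interval and is therefore independent of $\pi$, so the corrections attached to $\pi$ and to $\sigma$ cancel.

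For the cumulant functions I would start from $\psi_n^{(\omega)}=\sum_{\rho\in NC(2m),\,\rho\le\omega}\kappa^{(\rho)}$, invert it on $NC(2m)$ to get $\kappa^{(\omega)}=\sum_{\rho\le\omega}\mu_{2m}(\rho,\omega)\psi_n^{(\rho)}$, and substitute the moment formula just proved. Interchanging the two (finite) sums, the contribution of a fixed pair $\pi,\sigma\in NC^\epsilon(m)$ with $\widetilde\pi\le\ker\mathbf i$, $\widetilde\sigma\le\ker\mathbf j$ becomes $n^{|\pi|-|\sigma|-m}\bigl(\mu_m(\sigma,\pi)\sum_\rho\mu_{2m}(\rho,\omega)+O(n^{-2})\bigr)$, where $\rho$ runs over the $\rho\in NC(2m)$ with $\widetilde\pi\le\rho$, $\widetilde\sigma\le\rho$, $\rho\le\omega$ --- equivalently over the interval $[\widetilde\pi\vee_{nc}\widetilde\sigma,\omega]$, since a noncrossing partition dominates both $\widetilde\pi$ and $\widetilde\sigma$ exactly when it dominates $\widetilde\pi\vee_{nc}\widetilde\sigma$. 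By the M\"obius identity $\sum_{\tau\le\rho\le\omega}\mu_{2m}(\rho,\omega)=\delta_{\tau\omega}$ applied with $\tau=\widetilde\pi\vee_{nc}\widetilde\sigma$, this contribution reduces to $n^{|\pi|-|\sigma|-m}(\mu_m(\sigma,\pi)\delta_{\widetilde\pi\vee_{nc}\widetilde\sigma,\omega}+O(n^{-2}))$, and the stated formula follows once one observes that the pairs with $\widetilde\pi\vee_{nc}\widetilde\sigma\neq\omega$ only feed the $O(n^{-2})$ error.

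The step I expect to be the main obstacle is the combinatorial collapse in the second part of the moment argument --- and within it point (iii): the naive block-by-block exponent is wrong, since $\sum_V|\pi_V|$ need not equal $|\pi|$ (this already occurs in small cases where $\widetilde\pi$ wraps around a block of $\omega$), so one must see that the discrepancy depends only on $\omega$ and hence cancels between $\pi$ and $\sigma$. Establishing (i) cleanly --- recognizing $\{\pi\in NC(m):\widetilde\pi\le\omega\}$ as an interval of $NC(m)$ that factors through the blocks of $\omega$ as a product of smaller noncrossing-partition lattices --- is the genuine technical content, and this is precisely where the fattening and Kreweras lemmas of Section~3 are meant to be used.
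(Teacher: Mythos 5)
Your overall route is the same as the paper's (blockwise Weingarten formula combined with Theorem \ref{west} for the moment functions, then M\"obius inversion on $NC(2m)$ for the cumulant functions; that second half of your argument is correct and matches the paper). The gap is exactly at the ``combinatorial collapse'' you flag, and it is not just a missing verification: claims (i)--(iii) are false. It is true that $\{\pi \in NC(m): \widetilde\pi \le \omega\}$ is an interval $[\alpha,\beta]$ when $\omega=\widetilde\alpha\vee\widetilde\beta$ (this follows from Lemmas \ref{nch} and \ref{intertwine}), but the bijection $\pi\mapsto(\pi_V)_{V\in\omega}$ onto $\prod_V NC(|V|/2)$ is \emph{not} an order isomorphism: on a block of $\omega$ whose least element is even, the local unfattening is twisted by a rotation, hence (Lemma \ref{kreweras}) by a Kreweras complement, which reverses order. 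Concretely, take $m=4$, $\omega=\{(1,8),(2,3,4,5,6,7)\}=\widetilde\alpha\vee\widetilde{1_4}$ with $\alpha=\{(1,4),(2),(3)\}$, and $\sigma=\alpha\le\pi=1_4$. On the block $V=(2,\dotsc,7)$ one finds $\pi_V=0_3$ and $\sigma_V=1_3$, so the local order is reversed; consequently $\prod_V\mu_{|V|/2}(\sigma_V,\pi_V)=\mu_1(1_1,1_1)\,\mu_3(1_3,0_3)=0$ while $\mu_4(\alpha,1_4)=2$, so (ii) fails; and $\sum_V|\pi_V|-|\pi|=4-1=3$ whereas $\sum_V|\sigma_V|-|\sigma|=2-3=-1$, so the discrepancy is not a function of $\omega$ alone and (iii) fails: your exponent would be $\sum_V(|\pi_V|-|\sigma_V|-|V|/2)=-2$ instead of the correct $|\pi|-|\sigma|-m=-6$.

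The defect is also analytic, not only a bookkeeping issue: applying Theorem \ref{west} blockwise to $(\pi_V,\sigma_V)$ as they stand gives, on the block $V$ above, only $W_{\epsilon|_V n}(\widetilde\pi|_V,\widetilde\sigma|_V)=n^{-1}\bigl(\mu_3(1_3,0_3)+O(n^{-2})\bigr)=O(n^{-3})$, whereas (since $W_{\epsilon n}$ is symmetric) the same theorem with the arguments interchanged shows this entry is of order $n^{-5}$ with leading coefficient $\mu_3(0_3,1_3)=2$; so the naive blockwise product only yields an $O(n^{-4})$ bound for this pair and cannot produce the asserted $n^{-6}\bigl(\mu_4(\alpha,1_4)+O(n^{-2})\bigr)$. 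The missing idea is the one the paper uses: write $\omega=\widetilde\alpha\vee\widetilde\beta$ via Lemma \ref{nch} and measure each local Weingarten entry against $\widetilde\alpha|_V$, the reference pairing intrinsically attached to $\omega$, rather than against the naive pairing of consecutive legs of $V$. When $\min V$ is odd the two coincide; when $\min V$ is even one passes to the shifted partitions via Lemma \ref{kreweras}, which replaces block counts by their Kreweras-complementary values and interchanges the two arguments of the M\"obius function --- precisely absorbing the order reversal exhibited above. Only after this does multiplying over the blocks give $n^{|\widetilde\pi\vee\widetilde\alpha|-|\widetilde\sigma\vee\widetilde\alpha|-m}\bigl(\mu_{2m}(\widetilde\sigma\vee\widetilde\alpha,\widetilde\pi\vee\widetilde\alpha)+O(n^{-2})\bigr)$, and then Theorem \ref{linearization} (using $\alpha\le\sigma,\pi\le\beta$) together with Lemma \ref{intertwine} converts this into the stated $n^{|\pi|-|\sigma|-m}\bigl(\mu_m(\sigma,\pi)+O(n^{-2})\bigr)$.
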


\begin{proof}
First note that
$\psi_n^{(\omega)}[U_{i_1j_1}^{\epsilon_1},\dotsc,U_{i_{2m}j_{2m}}^{\epsilon_{2m}}] = 0$
unless $\omega \in NC_h(2m)$, i.e., unless each block of $\omega$ has an even number of
elements.  So suppose this is the case, then by Lemma \ref{nch} we have $\omega =
\widetilde \alpha \vee \widetilde \beta$ for some $\alpha,\beta \in NC(m)$ with $\alpha
\leq \beta$.  By the Weingarten formula, we have
\begin{equation*}
\psi_n^{(\omega)}[U_{i_1j_1}^{\epsilon_1},\dotsc,U_{i_{2m}j_{2m}}^{\epsilon_{2m}}] =
\sum_{\substack{\pi,\sigma \in NC^{\epsilon}(m)\\ \widetilde \pi \leq \ker \mathbf i
\wedge \omega\\ \widetilde \sigma \leq \ker \mathbf j \wedge \omega}} \prod_{V \in
\omega} W_{\epsilon|_V n}(\widetilde \pi|_V,\widetilde \sigma|_V).
\end{equation*}

Let $V = (l_1 < \dotsb < l_s)$ be a block of $\omega$. In order to apply Theorem
\ref{west} we have to write $\widetilde \pi|_V$ and $\widetilde \sigma|_V$ as $\widetilde
{\pi_V}$ and $\widetilde {\sigma_V}$, respectively, for some $\pi_V,\sigma_V\in NC(\vert
V\vert /2)$. Since $\mu_{\vert V\vert/2}(\sigma_V,\pi_V)=\mu_{\vert V\vert}(\widehat
{\sigma_V},\widehat{\pi_V})$, it suffices to recover the doubled versions $\widehat
{\sigma_V},\widehat{\pi_V}$ from $\widetilde \pi|_V$ and $\widetilde \sigma|_V$. But this
can be achieved as follows.
$$\widehat{\pi_V}=\widetilde{\pi_V}\vee \hat 0_{\vert V\vert/2}=\widetilde \pi|_V\vee
\{(l_1,l_2),\dots,(l_{s-1},l_s)\}.$$ So it remains to write
$\{(l_1,l_2),\dots,(l_{s-1},l_s)\}$ intrinsically in terms of $\omega$.

Recall from Lemma \ref{intertwine} that we have $K(\omega) = \alpha \wr K(\beta)$.  It
follows that for $1 \leq r \leq s$ such that $l_r$ is odd, $\alpha$ has a block whose
least element is $\tfrac{l_r+1}{2}$ and greatest element is $\tfrac{l_{r+1}}{2}$.
Therefore $l_r$ is joined to $l_{r+1}$ in $\widetilde \alpha$.  So if $l_1$ is odd, then
$\widetilde \alpha|_V$ is equal to $\{(l_1,l_2),(l_3,l_4),\dotsc,(l_{s-1},l_s)\}$.
In this case, from Theorem \ref{west} we have
\begin{equation*}
 W_{\epsilon|_V n}(\widetilde \pi|_V, \widetilde \sigma|_V) = n^{\bigl|\widetilde \pi|_V \vee \widetilde \alpha|_V\bigr| -\bigl|\widetilde \sigma|_V \vee \widetilde \alpha|_V\bigr| - |V|/2}(\mu_{|V|}(\widetilde \sigma|_V \vee \widetilde \alpha|_V, \widetilde \pi|_V \vee \widetilde \alpha|_V) + O(n^{-2})).
\end{equation*}
On the other hand, if $l_1$ is even then $\widetilde \alpha|_V =
\{(l_1,l_s),(l_2,l_3),\dotsc,(l_{s-2},l_{s-1})\}$.  In this case we have
\begin{align*}
W_{\epsilon|_V n}(\widetilde \pi|_V, \widetilde \sigma|_V) &= n^{\bigl|\widetilde \sigma|_V \vee \overrightarrow{\widetilde \alpha|_V}\bigr| - \bigl|\widetilde \pi|_V \vee \overrightarrow{\widetilde \alpha|_V}\bigr| - |V|/2}(\mu_{|V|}(\widetilde \pi|_V \vee \overrightarrow{\widetilde \alpha|_V}, \widetilde \sigma|_V \vee \overrightarrow{\widetilde \alpha|_V})+ O(n^{-2}))\\
&= n^{\bigl|\overleftarrow{\widetilde \sigma|_{V}} \vee \widetilde \alpha|_V\bigr| - \bigl|\overleftarrow{\widetilde \pi|_V} \vee \widetilde \alpha|_V\bigr| - |V|/2}(\mu_{|V|}(\overleftarrow{\widetilde \pi|_V} \vee \widetilde \alpha|_V, \overleftarrow{\widetilde \sigma|_V} \vee \widetilde \alpha|_V) + O(n^{-2})),
\end{align*}
where here the arrows act on the legs of $V$.  Since this corresponds, by Lemma \ref{kreweras}, to the Kreweras complement on $NC_{|V|/2}$, we have
\begin{equation*}
\bigl|\overleftarrow{\widetilde \sigma|_{V}} \vee \widetilde \alpha|_V\bigr| = |V|/2+1 - \bigl|\widetilde \sigma|_{V} \vee \widetilde \alpha|_V\bigr|
\end{equation*}
and
\begin{equation*}
\mu_{|V|}(\overleftarrow{\widetilde \pi|_V} \vee \widetilde \alpha|_V, \overleftarrow{\widetilde \sigma|_V} \vee \widetilde \alpha|_V)  = \mu_{|V|}(\widetilde \sigma|_V \vee \widetilde \alpha|_V, \widetilde \pi|_V \vee \widetilde \alpha|_V).
\end{equation*}
So it follows that, as in previous case, we have
\begin{equation*}
W_{\epsilon|_V n}(\widetilde \pi|_V, \widetilde \sigma|_V) = n^{\bigl|\widetilde \pi|_V \vee \widetilde \alpha|_V\bigr| -\bigl|\widetilde \sigma|_V \vee \widetilde \alpha|_V\bigr| - |V|/2}(\mu_{|V|}(\widetilde \sigma|_V \vee \widetilde \alpha|_V, \widetilde \pi|_V \vee \widetilde \alpha|_V) + O(n^{-2})).
\end{equation*}

Therefore,
\begin{align*}
 \psi_n^{(\omega)}[U_{i_1j_1}^{\epsilon_1},\dotsc,U_{i_{2m}j_{2m}}^{\epsilon_{2m}}] &=\sum_{\substack{\sigma,\pi \in NC^{\epsilon}(m)\\ \widetilde \pi \leq \ker \mathbf i \wedge \omega \\ \widetilde \sigma \leq \ker \mathbf j \wedge \omega}} \prod_{V \in \omega} n^{\bigl|\widetilde \pi|_V \vee \widetilde \alpha|_V\bigr| -\bigl|\widetilde \sigma|_V \vee \widetilde \alpha|_V\bigr| - |V|/2}(\mu_{|V|}(\widetilde \sigma|_V \vee \widetilde \alpha|_V, \widetilde \pi|_V \vee \widetilde \alpha|_V) + O(n^{-2})) \\
&= \sum_{\substack{\sigma,\pi \in NC^{\epsilon}(m)\\ \widetilde \pi \leq \ker \mathbf i \wedge \omega \\ \widetilde \sigma \leq \ker \mathbf j \wedge \omega}} n^{|\widetilde \pi \vee \widetilde \alpha|-|\widetilde \sigma \vee \widetilde \alpha|-m}(\mu_{2m}(\widetilde \sigma \vee \widetilde \alpha,\widetilde \pi \vee \widetilde \alpha) + O(n^{-2})),
\end{align*}
where we have used the multiplicativity of the M\"{o}bius function on $NC(2m)$.

Now since $\widetilde \sigma = \widetilde \sigma \vee \widetilde \sigma \leq \widetilde \alpha \vee \widetilde \beta$, taking the Kreweras complement and applying Lemma \ref{intertwine} gives $\alpha \wr K(\beta) \leq \sigma \wr K(\sigma)$.  So we have $\alpha \leq \sigma \leq \beta$.  By Theorem \ref{linearization}, we then have $|\widetilde \sigma \vee \widetilde \alpha| = |\sigma| +m - |\alpha|$.  Also, we have
\begin{align*}
\mu_{2m}(\widetilde \sigma \vee \widetilde \alpha,\widetilde \pi \vee \widetilde \alpha) &= \mu_{2m}(K(\widetilde \pi \vee \widetilde \alpha),K(\widetilde \sigma \vee \widetilde \alpha))\\
&= \mu_{2m}(\alpha \wr K(\pi), \alpha \wr K(\sigma))\\
&= \mu_{m}(K(\pi),K(\sigma))\\
&= \mu_m(\sigma,\pi).
\end{align*}
Plugging this into the equation above, we have
\begin{equation*}
 \psi_n^{(\omega)}[U_{i_1j_1}^{\epsilon_1},\dotsc,U_{i_{2m}j_{2m}}^{\epsilon_{2m}}]
 =  \sum_{\substack{\sigma,\pi \in NC^{\epsilon}(m)\\
 \widetilde \pi \leq \ker \mathbf i \wedge \omega \\ \widetilde \sigma
 \leq \ker \mathbf j \wedge \omega}} n^{|\pi|-|\sigma|-m}(\mu_{m}(\sigma,\pi) + O(n^{-2})).
\end{equation*}

For the cumulant function this gives
\begin{align*}
 \kappa^{(\tau)}[U_{i_1j_1}^{\epsilon_1},\dotsc,U_{i_{2m}j_{2m}}^{\epsilon_{2m}}] &= \sum_{\substack{\omega \in NC(2m)\\ \omega \leq \tau}} \mu_{2m}(\omega,\tau) \psi_n^{(\omega)}[U_{i_1j_1}^{\epsilon_1},\dotsc,U_{i_{2m}j_{2m}}^{\epsilon_{2m}}]
\\
 &= \sum_{\substack{\omega \in NC(2m)\\ \omega \leq \tau}} \mu_{2m}(\omega,\tau) \sum_{\substack{\sigma,\pi \in NC^{\epsilon}(m)\\ \widetilde \pi \leq \ker \mathbf i \wedge \omega \\ \widetilde \sigma \leq \ker \mathbf j \wedge \omega}} n^{|\pi|-|\sigma|-m}(\mu_m(\sigma,\pi) + O(n^{-2}))\\
&= \sum_{\substack{\sigma,\pi \in NC^{\epsilon}(m)\\ \widetilde \pi \leq \ker \mathbf i
\\ \widetilde \sigma \leq \ker \mathbf j }} n^{|\pi|-|\sigma|-m}(\mu_m(\sigma,\pi) +
O(n^{-2})) \sum_{\substack{\omega \in NC(2m)\\ \widetilde \pi \vee_{nc} \widetilde \sigma
\leq \omega \leq \tau}} \mu_{2m}(\omega,\tau).
\end{align*}
Since
\begin{equation*}
 \sum_{\substack{\omega \in NC(2m)\\ \widetilde \pi \vee_{nc} \widetilde \sigma \leq \omega \leq \tau}} \mu_{2m}(\omega,\tau) = \begin{cases} 1, & \widetilde \pi \vee_{nc} \widetilde \sigma = \tau\\ 0, & \text{otherwise}\end{cases},
\end{equation*}
the result follows.

\end{proof}

As a corollary, we can give an estimate on the Haar state on the free product
$A_u(n)^{*\infty}$.

\begin{cor}\label{integrate}
Let $l_1,\dotsc,l_{2m} \in \N$, $\epsilon_1,\dotsc,\epsilon_{2m} \in \{1,*\}$ and
$i_1,j_1,\dotsc,i_{2m},j_{2m} \in \N$.  In $A_u(n)^{*\infty}$, we have
\begin{equation*}
 \psi_n^{*\infty}\Bigl(U(l_1)^{\epsilon_1}_{i_1j_1}\dotsb U(l_{2m})^{\epsilon_{2m}}_{i_{2m}j_{2m}}\Bigr) = \sum_{\substack{\pi,\sigma \in NC^{\epsilon}(m)\\ \widetilde \pi \leq \ker \mathbf i \wedge \ker \mathbf l \\ \widetilde \sigma \leq \ker \mathbf j \wedge \ker \mathbf l}} n^{|\pi| -|\sigma| - m}(\mu_m(\sigma,\pi) + O(n^{-2})).
\end{equation*}

\end{cor}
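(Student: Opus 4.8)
The plan is to deduce the estimate from the single-copy cumulant estimate of the preceding corollary by exploiting the freeness of the copies $U(1),U(2),\dotsc$ inside $A_u(n)^{*\infty}$. Since the families $(U(l)_{ij})_{1\leq i,j\leq n}$, $l\in\N$, are freely independent with respect to $\psi_n^{*\infty}$ and each has the same joint $*$-distribution as $(U_{ij})$ in $(A_u(n),\psi_n)$, the characterization of freeness by the vanishing of mixed cumulants gives, just as in the proof of Corollary~\ref{freeadjint},
\[
 \psi_n^{*\infty}\Bigl(U(l_1)^{\epsilon_1}_{i_1j_1}\dotsb U(l_{2m})^{\epsilon_{2m}}_{i_{2m}j_{2m}}\Bigr)
 = \sum_{\substack{\tau\in NC(2m)\\ \tau\leq\ker\mathbf l}} \kappa^{(\tau)}\bigl[U_{i_1j_1}^{\epsilon_1},\dotsc,U_{i_{2m}j_{2m}}^{\epsilon_{2m}}\bigr],
\]
where $\kappa^{(\tau)}$ denotes the multiplicative extension of the free cumulants of the single algebra $(A_u(n),\psi_n)$: no block of a $\tau\leq\ker\mathbf l$ meets two distinct copies, and a block lying inside one copy is evaluated via the common distribution.

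Next I would substitute the cumulant estimate of the preceding corollary, taken with $\omega=\tau$, into each summand and interchange the two sums. For fixed $\pi,\sigma\in NC^{\epsilon}(m)$ with $\widetilde\pi\leq\ker\mathbf i$ and $\widetilde\sigma\leq\ker\mathbf j$, the inner condition $\widetilde\pi\vee_{nc}\widetilde\sigma=\tau$ picks out the single value $\tau=\widetilde\pi\vee_{nc}\widetilde\sigma$, which appears in the outer sum precisely when $\widetilde\pi\vee_{nc}\widetilde\sigma\leq\ker\mathbf l$; so the $\tau$-sum collapses and we are left with
\[
 \psi_n^{*\infty}\Bigl(U(l_1)^{\epsilon_1}_{i_1j_1}\dotsb U(l_{2m})^{\epsilon_{2m}}_{i_{2m}j_{2m}}\Bigr)=\sum_{\substack{\pi,\sigma\in NC^{\epsilon}(m)\\ \widetilde\pi\leq\ker\mathbf i,\ \widetilde\sigma\leq\ker\mathbf j\\ \widetilde\pi\vee_{nc}\widetilde\sigma\leq\ker\mathbf l}} n^{|\pi|-|\sigma|-m}\bigl(\mu_m(\sigma,\pi)+O(n^{-2})\bigr).
\]

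It then remains to replace the constraint $\widetilde\pi\vee_{nc}\widetilde\sigma\leq\ker\mathbf l$ by the pair of constraints $\widetilde\pi\leq\ker\mathbf l$, $\widetilde\sigma\leq\ker\mathbf l$; together with the $\mathbf i$- and $\mathbf j$-constraints these become $\widetilde\pi\leq\ker\mathbf i\wedge\ker\mathbf l$ and $\widetilde\sigma\leq\ker\mathbf j\wedge\ker\mathbf l$, exactly as in the statement. One implication is immediate from $\widetilde\pi,\widetilde\sigma\leq\widetilde\pi\vee_{nc}\widetilde\sigma$; for the converse I would use the standard feature of the fattening operation that, for $\pi,\sigma\in NC(m)$, the join $\widetilde\pi\vee\widetilde\sigma$ taken in $\mc P(2m)$ is already non-crossing (this is what makes the linearization lemma, Theorem~\ref{linearization}, and the Kreweras-complement identities of Lemmas~\ref{kreweras} and~\ref{intertwine} meaningful), so that $\widetilde\pi\vee_{nc}\widetilde\sigma=\widetilde\pi\vee\widetilde\sigma$ and hence $\widetilde\pi,\widetilde\sigma\leq\ker\mathbf l$ forces $\widetilde\pi\vee_{nc}\widetilde\sigma\leq\ker\mathbf l$. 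Finally, the sum involves a number of terms bounded independently of $n$, and every exponent obeys $|\pi|-|\sigma|-m\leq-1$ since $|\sigma|\geq1$ and $|\pi|\leq m$, so each contribution $n^{|\pi|-|\sigma|-m}O(n^{-2})$ is $O(n^{-3})$ and is absorbed into the overall $O(n^{-2})$, giving the asserted formula. I expect the only step that is not routine bookkeeping to be this use of the non-crossing-join property of fattenings in the reindexing.
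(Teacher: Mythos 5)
Your first three steps are exactly the paper's proof: expand $\psi_n^{*\infty}$ by vanishing of mixed cumulants over $\tau\in NC(2m)$, $\tau\leq\ker\mathbf l$, use identical distribution of the copies to reduce to single-copy cumulants, substitute the cumulant estimate of the preceding corollary, and collapse the $\tau$-sum into the constraint $\widetilde\pi\vee_{nc}\widetilde\sigma\leq\ker\mathbf l$. The gap is in your final reindexing: the claim that for arbitrary $\pi,\sigma\in NC(m)$ the join $\widetilde\pi\vee\widetilde\sigma$ taken in $\mc P(2m)$ is automatically non-crossing is false, and it is not what Theorem \ref{linearization} or Lemmas \ref{kreweras}, \ref{intertwine} say (the lemma asserts non-crossingness only under the hypothesis $\sigma\leq\pi$; the linearization formula is a block count for the $\mc P(2m)$-join and needs no non-crossingness). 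A counterexample: in $NC(4)$ take $\pi=\{\{1,3\},\{2\},\{4\}\}$ and $\sigma=\{\{1\},\{2,4\},\{3\}\}$, both in $NC^{\epsilon}(4)$ for $\epsilon=1*1*1*1*$; then $\widetilde\pi=\{\{1,6\},\{2,5\},\{3,4\},\{7,8\}\}$, $\widetilde\sigma=\{\{1,2\},\{3,8\},\{4,7\},\{5,6\}\}$, and $\widetilde\pi\vee\widetilde\sigma=\{\{1,2,5,6\},\{3,4,7,8\}\}$ has a crossing, so $\widetilde\pi\vee_{nc}\widetilde\sigma=1_8$. Taking $\mathbf l=(1,1,2,2,1,1,2,2)$, so that $\ker\mathbf l=\{\{1,2,5,6\},\{3,4,7,8\}\}$, one has $\widetilde\pi,\widetilde\sigma\leq\ker\mathbf l$ but $\widetilde\pi\vee_{nc}\widetilde\sigma\not\leq\ker\mathbf l$: the two index sets you claim are equal genuinely differ.

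The conclusion is nevertheless salvageable, and this is the point your argument is missing: any pair $(\pi,\sigma)$ in the discrepancy has $\widetilde\pi\vee\widetilde\sigma$ crossing, hence by (the contrapositive of) Lemma \ref{intertwine} it satisfies $\sigma\not\leq\pi$, so $\mu_m(\sigma,\pi)=0$ and its asserted contribution $n^{|\pi|-|\sigma|-m}\bigl(\mu_m(\sigma,\pi)+O(n^{-2})\bigr)$ can simply be taken to be $0$; equivalently, when $\mu_m(\sigma,\pi)\neq 0$ one has $\sigma\leq\pi$, Lemma \ref{intertwine} applies, $\widetilde\pi\vee_{nc}\widetilde\sigma=\widetilde\pi\vee\widetilde\sigma$, and the two constraints do coincide. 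With that adjustment your proof is the paper's proof. (Your closing remark about the exponent being at most $-1$ is not needed for this: the statement is a per-term estimate, and the extra pairs are harmless because their leading coefficient vanishes, not because of the size of $n^{|\pi|-|\sigma|-m}$.)
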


\begin{proof}
Since the families $(\{U(l)_{ij}\})_{l \in \N}$ are freely independent, we have by the
vanishing of mixed cumulants
\begin{align*}
 \psi_n^{*\infty}\Bigl(U(l_1)^{\epsilon_1}_{i_1j_1}\dotsb U(l_{2m})^{\epsilon_{2m}}_{i_{2m}j_{2m}}\Bigr) = \sum_{\substack{\tau \in NC(2m)\\ \tau \leq \ker \mathbf l}} \kappa^{(\tau)}[U(l_1)^{\epsilon_1}_{i_1j_1},\dotsc,U(l_{2m})^{\epsilon_{2m}}_{i_{2m}j_{2m}}].
\end{align*}
Since the families $(\{U(l)_{ij}\})_{l \in \N}$ are identically distributed, we have
\begin{equation*}
 \kappa^{(\tau)}[U(l_1)^{\epsilon_1}_{i_1j_1},\dotsc,U(l_{2m})^{\epsilon_{2m}}_{i_{2m}j_{2m}}] = \kappa^{(\tau)}[U(1)^{\epsilon_1}_{i_1j_1},\dotsc,U(1)^{\epsilon_{2m}}_{i_{2m}j_{2m}}]
\end{equation*}
for any $\tau \in NC(2m)$ such that $\tau \leq \ker \mathbf l$.  Applying the previous
corollary, we have
\begin{align*}
 \psi_n^{*\infty}\Bigl(U(l_1)^{\epsilon_1}_{i_1j_1}\dotsb U(l_{2m})^{\epsilon_{2m}}_{i_{2m}j_{2m}}\Bigr) &= \sum_{\substack{\tau \in NC(2m)\\ \tau \leq \ker \mathbf l}} \sum_{\substack{\pi,\sigma \in NC^{\epsilon}(m)\\ \widetilde \pi \leq \ker \mathbf i\\ \widetilde \sigma \leq \ker \mathbf j\\ \widetilde \pi \vee_{nc} \widetilde \sigma = \tau}} n^{|\pi|-|\sigma|-m}(\mu_m(\sigma,\pi)+O(n^{-2}))\\
&= \sum_{\substack{\pi,\sigma \in NC^{\epsilon}(m)\\ \widetilde \pi \leq \ker \mathbf i
\wedge \ker \mathbf l\\ \widetilde \sigma \leq \ker \mathbf j \wedge \ker \mathbf l}}
n^{|\pi|-|\sigma|-m}(\mu_m(\sigma,\pi)+O(n^{-2})).
\end{align*}

\end{proof}

\section{Asymptotic freeness results}

\begin{rmk*}  Throughout the first part of this section, the framework will be as follows:  $\mc B$ will be a fixed unital C$^*$-algebra, and $(D_N(i))_{i \in I}$ will be a family of matrices in $M_N(\mc B)$ for $N \in \N$, which is a $\mc B$-valued probability space with conditional expectation $E_N = \tr_N \otimes \mathrm{id}_{\mc B}$.  Consider the free product $A_u(N)^{*\infty}$, generated by the entries in the matrices $(U_N(l))_{l \in \N} \in M_N(A_u(N)^{*\infty})$.  By a family of freely independent Haar quantum unitary random matrices, independent from $\mc B$, we will mean the family $(U_N(l) \otimes 1_{\mc B})_{l \in \N}$ in $M_N(A_u(N)^{*\infty} \otimes \mc B) = M_N(\C) \otimes A_u(N)^{*\infty} \otimes \mc B$, which we will still denote by $(U_N(l))_{l \in \N}$.  We also identify $D_N(i) = D_N(i) \otimes 1_{A_u(N)^{*\infty}}$ for $i \in I$.  We will consider the $\mc B$-valued joint distribution of the family of sets $(\{U_N(1),U_N(1)^*\},\{U_N(2),U_N(2)^*\},\dotsc, \{D_N(i)| i \in I\})$ with respect to the conditional expectation
\begin{equation*}
\psi_N^{*\infty} \otimes E_N = \mathrm{tr}_N \otimes \psi_N^{*\infty} \otimes
\mathrm{id}_{\mc B}.
\end{equation*}
\end{rmk*}

We can now state our main result.

\begin{thm}\label{mainthm}
Let $\mc B$ be a unital C$^*$-algebra, and let $(D_N(i))_{i \in I}$ be a family of
matrices in $M_N(\mc B)$ for $N \in \N$.  Suppose that there is a finite constant $C$
such that $\|D_N(i)\| \leq C$ for all $i \in I$ and $N \in \N$.  Let $(U_N(l))_{l \in
\N}$ be a family of freely independent $N \times N$ Haar quantum unitary random matrices,
independent from $\mc B$.  Let $(u(l),u(l)^*)_{l \in \N}$ and $(d_N(i))_{i \in I, N \in
\N}$ be random variables in a $\mc B$-valued probability space $(\mc A, E: \mc A \to \mc
B)$ such that
\begin{enumerate}
\item $(u(l),u(l)^*)_{l \in \N}$ is free from $(d_N(i))_{i \in I}$ with respect to $E$ for each $N \in \N$..
 \item $(\{u(l),u(l)^*\})_{l \in \N}$ is a free family with respect to $E$, and $u(l)$ is a Haar unitary, independent from $\mc B$ for each $l \in \N$.
\item $(d_N(i))_{i \in I}$ has the same $\mc B$-valued joint distribution with respect to $E$ as $(D_N(i))_{i \in I}$ has with respect to $E_N$.

\end{enumerate}
Then for any polynomials $p_1,\dotsc,p_{2m} \in \mc B \langle t(i)| i \in I \rangle$,
$l_1,\dotsc,l_{2m} \in \N$ and $\epsilon_1,\dotsc,\epsilon_{2m} \in \{1,*\}$,
\begin{equation*}
 \bigl\|(\psi_N^{*\infty} \otimes E_N)[U_N(l_1)^{\epsilon_1}p_1(D_N)\dotsb U_N(l_{2m})^{\epsilon_{2m}}p_{2m}(D_N)] - E[u(l_1)^{\epsilon_1}p_1(d_N)\dotsb u(l_{2m})^{\epsilon_{2m}}p_{2m}(d_N)]\bigr\|
\end{equation*}
is $O(N^{-2})$ as $N \to \infty$.
\end{thm}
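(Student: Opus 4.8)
The plan is to evaluate each side of the estimate explicitly and match them to leading order in $N$. By linearity and the uniform bound $\|D_N(i)\|\le C$, it suffices to treat the case where each $p_r$ is a monomial $p_r=b_{r,0}\,t(\iota_{r,1})\,b_{r,1}\dotsb t(\iota_{r,k_r})\,b_{r,k_r}$, so that $p_r(D_N)=b_{r,0}D_N(\iota_{r,1})b_{r,1}\dotsb D_N(\iota_{r,k_r})b_{r,k_r}\in M_N(\mc B)$; the uniform bound then makes all error constants below independent of $N$. Writing $\psi_N^{*\infty}\otimes E_N=\tr_N\otimes\psi_N^{*\infty}\otimes\mathrm{id}_{\mc B}$ in matrix coordinates, the left-hand side becomes $N^{-1}$ times a sum over index tuples $\mathbf a=(a_1,\dotsc,a_{2m})$ and $\mathbf c=(c_1,\dotsc,c_{2m})$ in $\{1,\dotsc,N\}$ of
\begin{equation*}
\psi_N^{*\infty}\bigl((U_N(l_1)^{\epsilon_1})_{a_1c_1}\dotsb(U_N(l_{2m})^{\epsilon_{2m}})_{a_{2m}c_{2m}}\bigr)\cdot(p_1(D_N))_{c_1a_2}(p_2(D_N))_{c_2a_3}\dotsb(p_{2m}(D_N))_{c_{2m}a_1},
\end{equation*}
where the $\mc B$-valued factors are threaded cyclically by the indices ($a_{2m+1}:=a_1$). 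Using Corollary \ref{freeadjint} one first rewrites the entries of $U_N(l_r)^{*}$ in terms of entries of $\overline{U_N(l_r)}$ (at the cost of transposing some index pairs), which puts the quantum-unitary moment into the form covered by Corollary \ref{integrate}.

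Next I would substitute Corollary \ref{integrate}: the $\psi_N^{*\infty}$-factor above equals $\sum_{\pi,\sigma}N^{|\pi|-|\sigma|-m}\bigl(\mu_m(\sigma,\pi)+O(N^{-2})\bigr)$, the sum over $\pi,\sigma\in NC^{\epsilon}(m)$ with $\widetilde\pi\le\ker\mathbf a\wedge\ker\mathbf l$ and $\widetilde\sigma\le\ker\mathbf c\wedge\ker\mathbf l$. Interchanging the sums and, for fixed $(\pi,\sigma)$, carrying out the remaining index sums: the constraint $\widetilde\pi\le\ker\mathbf a$ forces the row indices to be constant on the blocks of the pairing $\widetilde\pi$, similarly $\widetilde\sigma$ for the columns, while the cyclic $D_N$-word identifies $c_r$ with $a_{r+1}$; summing each resulting free index over $\{1,\dotsc,N\}$ gives a factor $N$, and each maximal run of contracted $D_N$-entries collapses to a matrix product, so the fully summed expression is a finite sum of products of terms $(\tr_N\otimes\mathrm{id}_{\mc B})$ applied to words in the $D_N(\iota)$ and the coefficients $b_{r,s}$, each carrying a weight $\mu_m(\sigma,\pi)$ and a power of $N$. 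A bookkeeping of this power — the number of independent index loops, minus $1+m+|\sigma|-|\pi|$ — shows, via Theorem \ref{linearization} applied to the partition generated on the doubled index set by $\widetilde\pi$, $\widetilde\sigma$ and the chain $c_r\sim a_{r+1}$, that the exponent is $\le 0$ with equality only for a distinguished family of pairs $(\pi,\sigma)$, and that off this family it drops by an even amount; together with the $O(N^{-2})$ in the Weingarten coefficients, all remaining contributions are $O(N^{-2})$. Call the surviving leading sum $L_N$; then the left-hand side equals $L_N+O(N^{-2})$, with $L_N$ a fixed finite polynomial in the quantities $(\tr_N\otimes\mathrm{id}_{\mc B})[\text{words in }D_N]$.

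On the other side, $M_N:=E[u(l_1)^{\epsilon_1}p_1(d_N)\dotsb u(l_{2m})^{\epsilon_{2m}}p_{2m}(d_N)]$ is computed by the $\mc B$-valued moment–cumulant formula together with the vanishing of mixed cumulants: since each $\{u(l),u(l)^*\}$ is free from the others and from $(d_N(i))_{i\in I}$, only $\rho\in NC$ whose blocks lie entirely within the positions of a single $u(l)$ or entirely within the $d_N$-positions contribute, a $u(l)$-block must be of even size and alternating in $\{1,*\}$ with scalar weight the corresponding free cumulant of a Haar unitary. Using hypothesis (3), $E[\text{word in }d_N]=E_N[\text{word in }D_N]$, so $M_N$ too is a fixed finite polynomial in the $(\tr_N\otimes\mathrm{id}_{\mc B})[\text{words in }D_N]$. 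The final step is to exhibit a bijection between the distinguished configurations producing $L_N$ and the contributing partitions producing $M_N$ under which the $\mc B$-valued words coincide and the scalar weights agree — concretely, the Möbius sums $\sum_{\sigma\le\pi}\mu_m(\sigma,\pi)(\dotsb)$ appearing in $L_N$ must be matched with the Haar-unitary free cumulants in $M_N$. This is exactly where the combinatorics of Section 3 enters — the Kreweras-complement identities of Lemmas \ref{kreweras} and \ref{intertwine}, the bijection $\tau\leftrightarrow(\sigma,\pi)$ of Lemma \ref{nch}, and interval-closedness of $NC^{\epsilon}(m)$ in $NC(m)$ — and it yields $L_N=M_N$, hence the left-hand side equals $M_N+O(N^{-2})$, as claimed.

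I expect the main obstacle to be this last identification together with the $N$-power bookkeeping of the second paragraph: one must simultaneously track the $\mc B$-valued word attached to each configuration and its scalar coefficient, and verify that non-leading configurations contribute only $O(N^{-2})$ rather than $O(N^{-1})$. The latter rests on the evenness built into Theorem \ref{linearization} (the factor $2|\pi\vee\sigma|$) and into the $O(n^{-2})$ correction of Theorem \ref{west}; the former is the genuine content of the theorem. The reduction to monomials and the mechanical substitution of the Weingarten formula are routine.
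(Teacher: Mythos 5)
Your overall route is the same as the paper's: write the left side in matrix coordinates, use Corollary \ref{freeadjint} to turn adjoint entries into transposed entries, apply the Weingarten asymptotics of Corollary \ref{integrate}, evaluate the constrained index sums, and match the surviving terms against the free-probability computation of $E[u(l_1)^{\epsilon_1}a(1)\dotsb u(l_{2m})^{\epsilon_{2m}}a(2m)]$ (which the paper carries out in Proposition \ref{limitform}, exactly by the moment--cumulant/$R$-diagonality argument you sketch, together with Lemmas \ref{kreweras}, \ref{intertwine} and \ref{nch}; your plan for that step is correct in outline, and the ``Möbius weights versus Haar-unitary cumulants'' identification you defer is precisely $\mu_m(\sigma,1_m)=\prod_{W\in K(\sigma)}(-1)^{|W|-1}C_{|W|-1}$).

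The genuine gap is in your second paragraph, where you assert that for each fixed $(\pi,\sigma)$ the constrained sum of products of entries of the $A_N(k)=p_k(D_N)$ ``collapses to a matrix product'' and is controlled by counting independent index loops. Over a noncommutative $\mc B$ this is exactly the delicate point. When $\sigma\leq\pi$ the sum does organize itself, but into the \emph{nested} expectation functional $E_N^{(\sigma\wr K(\pi))}[A(1),\dotsc,A(2m)]$ times $N^{|\sigma|+|K(\pi)|}$ (Proposition \ref{expfunct}), not a product of traces. When $\sigma\not\leq\pi$ (the crossing configurations), the sum cannot be expressed through expectation functionals at all, and the loop count does not by itself bound its \emph{norm}: the trivial estimate (number of admissible index tuples times the sup of entry norms) gives $O(N^{2m})$, which is far too large, and there is no commutative factorization available to improve it. The paper needs Proposition \ref{normbound} here, whose proof embeds the data into $M_{N^{|\sigma|}}(\mc B)$ and uses contractivity of $\tr_{N^{|\sigma|}}\otimes\mathrm{id}_{\mc B}$ to get the bound $N^{|\sigma|+|K(\pi)|}\|A(1)\|\dotsb\|A(2m)\|$; only with this bound does the $O(N^{-2})$ correction in the Weingarten coefficient (the Möbius term vanishes when $\sigma\not\leq\pi$) yield an $O(N^{-2})$ contribution. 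Your appeal to the evenness in Theorem \ref{linearization} addresses the size of the scalar Weingarten coefficients, but not the norm of the accompanying operator-valued sum, so as written the estimate for the non-leading configurations does not go through; note that this operator-valued obstruction is not a technicality --- it is the very mechanism by which the classical analogue fails in Example \ref{counterexample}.
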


Observe that Theorem \ref{mainthm} makes no assumption on the existence of a limiting
distribution for $(D_N(i))_{i \in I}$.  If one assumes also the existence of a limiting
(infinitesimal) $\mc B$-valued joint distribution, then asymptotic (infinitesimal)
freeness follows easily.  We will state this as Theorem \ref{mainthm2} below, let us
first recall the relevant notions.

\begin{defn}
Let $\mc B$ be a unital C$^*$-algebra, and for each $N \in \N$ let $(D_N(i))_{i \in I}$
be a family of noncommutative random variables in a $\mc B$-valued probability space
$(\mc A(N),E_N:\mc A(N) \to \mc B)$.
\begin{enumerate}
\item We say that the joint distribution of $(D_N(i))_{i \in I}$ \textit{converges weakly in norm} if there is a $\mc B$-linear map $E:\mc B \langle D(i)| i \in I \rangle \to \mc B$ such that
\begin{equation*}
 \lim_{N \to \infty} \bigl\|E_N[b_0D_N(i_1)\dotsb D_N(i_k)b_k] - E[b_0D(i_1)\dotsb D(i_k)b_k]\bigr\| = 0
\end{equation*}
for any $i_1,\dotsc,i_k \in I$ and $b_0,\dotsc,b_k \in \mc B$.  If $\mc B$ is a von
Neumann algebra with faithful, normal trace state $\tau$, we say the the joint
distribution of $(D_N(i))_{i \in I}$ \textit{converges weakly in $L^2$} if the equation
above holds with respect to $|\;|_2$.

\item If $I = \bigcup_{j \in J} I_j$ is a partition of $I$, we say that the sequence of sets of random variables $(\{D_N(i)| i \in I_j\})_{j \in J}$ are \textit{asymptotically free with amalgamation over $\mc B$} if the sets $(\{D(i)| i \in I_j\})_{j \in J}$ are freely independent with respect to $E$.

\item We say that the joint distribution of $(D_N(i))_{i \in I}$ \textit{converges infinitesimally in norm} if there is a $\mc B$-linear map $E':\mc B \langle D(i)| i \in I \rangle \to \mc B$ such that
\begin{equation*}
 \lim_{N \to \infty} {N}\Bigl\{E_N[b_0D_N(i_1)\dotsb D_N(i_k)b_k] -
 E[b_0D(i_1)\dotsb D(i_k)b_k]\Bigr\} = E'[b_0D(i_1)\dotsb D(i_k)b_k]
\end{equation*}
with convergence in norm, for any $b_0,\dotsc,b_k \in \mc B$ and $i_1,\dotsc,i_k \in I$.
If $\mc B$ is a von Neumann algebra with faithful, normal trace state $\tau$, we say the
the joint distribution of $(D_N(i))_{i \in I}$ \textit{converges infinitesimally in
$L^2$} if the equation above holds with respect to $|\;|_2$.
\item If $I = \bigcup_{j \in J} I_j$ is a partition of $I$, we say that the sequence of sets of random variables $(\{D_N(i)| i \in I_j\})_{j \in J}$ are \textit{asymptotically infinitesimally free with amalgamation over $\mc B$} if the sets $(\{D(i)| i \in I_j\})_{j \in J}$ are infinitesimally freely independent with respect to $(E,E')$.
\end{enumerate}
\end{defn}

\begin{thm}\label{mainthm2}
Let $\mc B$ be a unital C$^*$-algebra, and let $(D_N(i))_{i \in I}$ be a family of
matrices in $M_N(\mc B)$ for $N \in \N$.  Suppose that there is a finite constant $C$
such that $\|D_N(i)\| \leq C$ for all $i \in I$ and $N \in \N$.  For each $N \in \N$, let
$(U_N(l))_{l \in \N}$ be a family of freely independent $N \times N$ Haar quantum unitary
random matrices, independent from $\mc B$.
\begin{enumerate}
 \item If the joint distribution of $(D_N(i))_{i \in I}$ converges weakly (in norm or in $L^2$ with respect to a faithful trace), then the sets
\begin{equation*}
 (\{U_N(1),U_N(1)^*\},\{U_N(2),U_N(2)^*\},\dotsc,\{D_N(i)|i \in I\})
\end{equation*}
are asymptotically free with amalgamation over $\mc B$ as $N \to \infty$.
\item If the joint distribution of $(D_N(i))_{i \in I}$ converges infinitesimally (in norm or in $L^2$ with respect to a faithful trace), then the sets
\begin{equation*}
 (\{U_N(1),U_N(1)^*\},\{U_N(2),U_N(2)^*\},\dotsc, \{D_N(i)|i \in I\})
\end{equation*}
are asymptotically infinitesimally free with amalgamation over $\mc B$ as $N \to \infty$.
\end{enumerate}

\end{thm}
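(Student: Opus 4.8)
Both parts are reductions to Theorem~\ref{mainthm}, which already delivers the crucial $O(N^{-2})$ comparison of the matrix moments with the moments of a model in which the relevant pieces are genuinely free over $\mc B$; what remains is to let the finite-$N$ model converge to its limit and to feed the resulting estimates into the definitions of asymptotic, resp.\ infinitesimal, freeness with amalgamation.

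For part~(1) I would fix a $\mc B$-valued probability space $(\mc A,E)$ containing $\mc B$, a free family $(u(l))_{l\in\N}$ of Haar unitaries each $*$-free from $\mc B$, and---free over $\mc B$ from all the $u(l)$---both a family $(d_N(i))$ having the same $\mc B$-valued distribution with respect to $E$ as $(D_N(i))$ has with respect to $E_N$, and a family $(d(i))$ realizing the weak limit distribution of $(D_N(i))$ (which exists by hypothesis). Since a mixed $\mc B$-valued moment of a family free over $\mc B$ is a fixed multilinear expression in the $\mc B$-valued moments of the free components---this being the content of the vanishing of mixed cumulants---weak convergence $d_N(i)\to d(i)$ forces $E[u(l_1)^{\epsilon_1}p_1(d_N)\dotsb u(l_{2m})^{\epsilon_{2m}}p_{2m}(d_N)]\to E[u(l_1)^{\epsilon_1}p_1(d)\dotsb u(l_{2m})^{\epsilon_{2m}}p_{2m}(d)]$; combined with Theorem~\ref{mainthm}, the matrix moments $(\psi_N^{*\infty}\otimes E_N)[U_N(l_1)^{\epsilon_1}p_1(D_N)\dotsb U_N(l_{2m})^{\epsilon_{2m}}p_{2m}(D_N)]$ converge to the same limits (in norm, and in $|\;|_2$ using $|xy|_2\le\|x\|\,|y|_2$ and the uniform bound $C$). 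To see that this pins down the whole limiting $\mc B$-valued joint distribution of $(\{U_N(l),U_N(l)^*\}_l,\{D_N(i)\}_i)$, observe that a moment with no $U$-factors converges by the hypothesis on $(D_N(i))$, a moment with an odd total number of $U$-factors vanishes identically for every $N$ by Theorem~\ref{weingarten} (the entries of each $U_N(l)$ have an even joint $*$-distribution), and every remaining moment is brought to the shape handled by Theorem~\ref{mainthm} by $\mc B$-linearity (stripping leading and trailing elements of $\mc B$, and inserting $1$ between equal consecutive factors), by absorbing interspersed elements of $\mc B$ into the adjacent $D$-polynomials, and---for a moment beginning or ending with a $D$-polynomial---by the traciality of $\mathrm{tr}_N\otimes\psi_N^{*\infty}$ together with multilinearity in the interspersed coefficients from $\mc B$. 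Thus the limiting joint distribution coincides with that of the family $(\{u(l),u(l)^*\}_l,\{d(i)\}_i)$, which is free over $\mc B$, so the sets are asymptotically free with amalgamation; the $L^2$ statement is identical once each algebra is replaced by the associated W$^*$-probability space.

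For part~(2) I would apply Proposition~\ref{infexample} with $K=\{1/N:N\in\N\}$ and $s=1/N$, taking $\mc A(s)=M_N(\C)\otimes A_u(N)^{*\infty}\otimes\mc B$ with the contractive conditional expectation $\psi_N^{*\infty}\otimes E_N$ and $(A_i(s))$ the family $(U_N(l),U_N(l)^*,D_N(i))$; the boundedness hypothesis of that proposition is immediate since $\|D_N(i)\|\le C$ and $U_N(l)^{\pm1}$ are unitaries. For the existence of $E$ and $E'$: $E$ is supplied by part~(1), and $E'$ is obtained by writing, via Theorem~\ref{mainthm}, the quantity $N\{(\psi_N^{*\infty}\otimes E_N)[U_N(l_1)^{\epsilon_1}p_1(D_N)\dotsb]-E[u(l_1)^{\epsilon_1}p_1(d)\dotsb]\}$ as $N\{E[u(l_1)^{\epsilon_1}p_1(d_N)\dotsb]-E[u(l_1)^{\epsilon_1}p_1(d)\dotsb]\}+O(N^{-1})$ and then applying the Leibniz rule to the multilinear dependence of the free-model moment on the $\mc B$-valued moments of the $d$-variables, whose first-order corrections are controlled by the infinitesimal-convergence hypothesis on $(D_N(i))$; the $O(N^{-1})$ term vanishes in the limit and $E'[1]=0$. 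The comparison family $(a_i(s))$ demanded by Proposition~\ref{infexample} is taken, for each $N$, inside the free product over $\mc B$ of copies of the C$^*$-algebra generated by $\mc B$ and a single variable with the \emph{exact} scalar $*$-distribution of $U_N(l)$, and the C$^*$-algebra generated by $\mc B$ and variables with the exact distribution of $(D_N(i))$, the components being free over $\mc B$; then conditions~(1) and~(2) of that proposition hold by construction, and condition~(3)---that $N\|(\psi_N^{*\infty}\otimes E_N)[U_N(l_1)^{\epsilon_1}p_1(D_N)\dotsb]-F[\dotsb]\|\to0$---follows from Theorem~\ref{mainthm} together with the observation, which is Theorem~\ref{west} applied with all polynomials trivial (equivalently, the $O(N^{-2})$-rate convergence of the scalar $*$-distribution of $U_N(l)$ to that of a free Haar unitary), that replacing the finite-$N$ unitary distribution by the exact Haar-unitary distribution alters each such moment by only $O(N^{-2})$. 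Proposition~\ref{infexample} then yields that the sets $(\{U_N(1),U_N(1)^*\},\{U_N(2),U_N(2)^*\},\dotsc,\{D_N(i)\mid i\in I\})$ are infinitesimally free with respect to $(E,E')$, i.e.\ asymptotically infinitesimally free with amalgamation; the $L^2$ case uses the evident $|\;|_2$-analogue of Proposition~\ref{infexample}.

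The point I expect to be most delicate is the bookkeeping in part~(2): one must make the slogan ``a free-model moment is a multilinear expression in the $\mc B$-valued moments of its free components'' precise enough to support both a Leibniz/telescoping argument for the existence of $E'$---differentiating a product of $\mc B$-valued quantities each of which converges with a prescribed first-order correction---and the $o(1/N)$ estimate required by condition~(3) of Proposition~\ref{infexample}, the latter genuinely needing the $O(N^{-2})$ (and not merely $O(N^{-1})$) rate of Theorem~\ref{west}, which is exactly the feature that fails for classical Haar unitaries and underlies the counterexample discussed later in this section. The reduction of an arbitrary $\mc B$-valued moment to the canonical word form in part~(1) is a second, more routine, point to handle carefully.
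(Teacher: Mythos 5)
Your overall route is the paper's: the entire published proof is the observation that Theorem \ref{mainthm2} follows from Theorem \ref{mainthm} together with Proposition \ref{infexample} (applied with $s=1/N$, the matrix family as $A_i(s)$, and the comparison family $(u(l),u(l)^*,d_N(i))$ of Theorem \ref{mainthm}), and both halves of your argument are elaborations of exactly that reduction. But one concrete step in your reduction of a general mixed moment to the canonical form of Theorem \ref{mainthm} fails: you propose to handle a moment \emph{beginning} with a $D$-polynomial ``by the traciality of $\tr_N\otimes\psi_N^{*\infty}$''. The relevant functional is the $\mc B$-valued conditional expectation $\tr_N\otimes\psi_N^{*\infty}\otimes\mathrm{id}_{\mc B}$, and this is not tracial when $\mc B$ is noncommutative: cyclic rotation reorders the $\mc B$-valued entries, which cannot be undone by multilinearity in the coefficients. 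Already in the limit model one has
\begin{equation*}
E[d_0\,u\,d_1\,u^*]=E[d_0]\,E[d_1],\qquad E[u\,d_1\,u^*\,d_0]=E[d_1]\,E[d_0],
\end{equation*}
which differ whenever $E[d_0]$ and $E[d_1]$ do not commute in $\mc B$; the same discrepancy is present at finite $N$. Since noncommutative (indeed infinite-dimensional) $\mc B$ is precisely the case the theorem is about --- the paper's remark preceding Proposition \ref{normbound} and the classical counterexample both hinge on this noncommutativity --- the rotation argument cannot be used, and without it your verification of hypothesis (3) of Proposition \ref{infexample} (and of the existence of the limits $E$, $E'$) only covers words that start with a $U$-letter.

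The gap is repairable without new ideas: given a word $q_0(D_N)U_N(l_1)^{\epsilon_1}q_1(D_N)\dotsb U_N(l_{2m})^{\epsilon_{2m}}q_{2m}(D_N)$ with a nonscalar leading $q_0$, insert $1=U_N(l_0)U_N(l_0)^*$ at the front (any index $l_0$ will do); the element is unchanged, the word is now in the canonical alternating form with $2m+2$ quantum-unitary letters and first polynomial equal to $1$, and the identical insertion $u(l_0)u(l_0)^*=1$ on the comparison side is harmless, so Theorem \ref{mainthm} applies verbatim. (Alternatively one can rerun the proof of Theorem \ref{mainthm} for words with a leading $D$-polynomial.) Two smaller remarks: in part (2) there is no need to manufacture comparison variables with ``the exact scalar $*$-distribution of $U_N(l)$'' and no need to invoke Theorem \ref{west} for this --- since $U_N(l)$ is a unitary matrix over $A_u(N)^{*\infty}$ commuting with $\mc B$ and $(\tr_N\otimes\psi_N^{*\infty})(U_N(l)^k)=\delta_{k0}$ by Theorem \ref{weingarten}, its $\mc B$-valued distribution is \emph{exactly} that of a Haar unitary independent from $\mc B$, so condition (1) of Proposition \ref{infexample} holds with the same family $(u(l),d_N(i))$ used in Theorem \ref{mainthm}; and your Leibniz/telescoping construction of $E'$ is sound provided you use the uniform bound $\|D_N(i)\|\le C$ and contractivity of $E_N$ to control the nested expectation functionals of Proposition \ref{limitform} when their inner arguments vary at rate $1/N$, which is the bookkeeping you yourself flag.
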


\begin{rmk*}
Theorem \ref{mainthm2} follows immediately from Theorem \ref{mainthm} and Proposition
\ref{infexample}.  The proof of Theorem \ref{mainthm} will require some preparation, we
begin by computing the limiting distribution appearing in the statement.
\end{rmk*}

\begin{prop}\label{limitform}
Let $(u(l),u(l)^*)_{l \in \N}$ and $(d_N(i))_{i \in I, N \in \N}$ be random variables in
a $\mc B$-valued probability space $(\mc A, E: \mc A \to \mc B)$ such that
\begin{enumerate}
\item $(u(l),u(l)^*)_{l \in \N}$ is free from $(d_N(i))_{i \in I}$ with respect to $E$ for each $N \in \N$..
 \item $(\{u(l),u(l)^*\})_{l \in \N}$ is a free family with respect to $E$, and $u(l)$ is a Haar unitary, independent from $\mc B$ for each $l \in \N$.
\end{enumerate}
Let $a(1),\dotsc,a(2m)$ be in the algebra generated by $\mc B$ and $\{d(i)|i \in I\}$,
and let $l_1,\dotsc,l_{2m} \in \N$ and $\epsilon_1,\dotsc,\epsilon_{2m} \in \{1,*\}$.
Then
\begin{equation*}
E[u(l_1)^{\epsilon_1}a(1)\dotsb u(l_{2m})^{\epsilon_{2m}}a(2m)] =
\sum_{\substack{\pi,\sigma \in NC^{\epsilon}(m)\\ \sigma \leq \pi\\ \widetilde \pi \vee
\widetilde \sigma \leq \ker \mathbf l}} \mu_m(\sigma,\pi) E^{(\sigma \wr
K(\pi))}[a(1),\dotsc,a(2m)].
\end{equation*}

\end{prop}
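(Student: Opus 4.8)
The plan is to evaluate the left-hand side by the operator-valued moment--cumulant formula together with the characterization of freeness via vanishing of mixed cumulants, and then to rewrite the resulting sum over non-crossing partitions in the stated form using the combinatorics of Section 3. First I would write, with the $4m$ arguments placed in positions $1,\dotsc,4m$ (odd positions carrying the $u$'s, even positions the $a$'s),
\[
E[u(l_1)^{\epsilon_1}a(1)\dotsb u(l_{2m})^{\epsilon_{2m}}a(2m)] = \sum_{\tau \in NC(4m)} \kappa_E^{(\tau)}[u(l_1)^{\epsilon_1},a(1),\dotsc,u(l_{2m})^{\epsilon_{2m}},a(2m)].
\]
By hypotheses (1) and (2) (and the tower property of freeness), the algebras generated over $\mc B$ by the individual sets $\{u(l),u(l)^*\}$, together with the algebra $\mc C$ generated by $\mc B$ and $\{d(i)\colon i \in I\}$, form a free family with amalgamation over $\mc B$; colouring position $2j-1$ by $l_j$ and position $2j$ by $\mc C$, the vanishing of mixed cumulants forces every block of a contributing $\tau$ to be monochromatic. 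Hence $\tau$ is obtained by partitioning the odd positions according to some $\alpha$ with $\alpha \leq \ker\mathbf l$ and the even positions according to some $\beta$, subject to $\tau \in NC(4m)$; after relabelling both the odd and the even positions by $1,\dotsc,2m$, this says $\tau = \alpha \wr \beta$.

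Next I would use that each $u(l)$ is a Haar unitary independent from $\mc B$, so that its $\mc B$-valued $*$-cumulants are those of a scalar Haar unitary: $\kappa_E^{(k)}(u(l)^{\delta_1}b_1,\dotsc,u(l)^{\delta_k}b_k)$ vanishes unless $k$ is even and $\delta_1\dotsb\delta_k$ is alternating, in which case it equals $c_k\,b_1\dotsb b_k$, where $c_{2p} = \mu_p(0_p,1_p)$ is the signed Catalan number. Substituting this into the recursive definition of $\kappa_E^{(\tau)}$ for $\tau = \alpha\wr\beta$ shows that $\kappa_E^{(\tau)}$ vanishes unless every block of $\alpha$ is even with alternating $\epsilon$-pattern, i.e.\ $\alpha \in NC_h^{\epsilon}(2m)$, and that then the $u$-blocks detach to give $\kappa_E^{(\tau)}[\,\cdot\,] = \bigl(\prod_{V \in \alpha}c_{|V|}\bigr)\,\kappa_E^{(\beta)}[a(1),\dotsc,a(2m)]$. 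For a fixed such $\alpha$, the admissible $\beta$ are exactly those with $\beta \leq K(\alpha)$, so summing over them and applying the moment--cumulant formula gives $\sum_{\beta \leq K(\alpha)}\kappa_E^{(\beta)}[a(1),\dotsc,a(2m)] = E^{(K(\alpha))}[a(1),\dotsc,a(2m)]$. Thus the left-hand side equals $\sum_{\alpha}\bigl(\prod_{V\in\alpha}c_{|V|}\bigr)E^{(K(\alpha))}[a(1),\dotsc,a(2m)]$, the sum being over $\alpha \in NC_h^{\epsilon}(2m)$ with $\alpha \leq \ker\mathbf l$.

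Finally I would reparametrize. By Lemma \ref{nch}, $\alpha \in NC_h^{\epsilon}(2m)$ corresponds bijectively to a pair $\sigma \leq \pi$ in $NC^{\epsilon}(m)$ with $\alpha = \widetilde\sigma \vee \widetilde\pi$, and the condition $\alpha \leq \ker\mathbf l$ becomes $\widetilde\sigma\vee\widetilde\pi \leq \ker\mathbf l$; by Lemma \ref{intertwine}, $K(\alpha) = \sigma \wr K(\pi)$, which is the partition in the statement. It then remains to check $\prod_{V \in \widetilde\sigma\vee\widetilde\pi}c_{|V|} = \mu_m(\sigma,\pi)$, which I would prove by induction on the number of blocks of $\pi$: using the recursive description of $\widetilde\sigma\vee\widetilde\pi$ from Section 3 (an interval block $V$ of $\pi$ contributes $\widetilde{\sigma|_V}\vee\widetilde{1_V}$ on the corresponding doubled positions) together with the multiplicativity of the M\"obius function, this reduces to $\pi = 1_m$, where $\widetilde\sigma\vee\widetilde{1_m} = \overrightarrow{\widehat{K(\sigma)}}$ has a block of size $2|B|$ for each block $B$ of $K(\sigma)$, so $\prod_V c_{|V|} = \prod_{B\in K(\sigma)}\mu_{|B|}(0_{|B|},1_{|B|}) = \mu_m(0_m,K(\sigma)) = \mu_m(\sigma,1_m)$, the last equality by invariance of the M\"obius function under the Kreweras complement. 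Assembling these identities yields exactly the claimed formula.

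The step I expect to be the main obstacle is the second one: making precise the assertion that the $u$-cumulants are ``scalar'' and detach cleanly requires care about how the $\mc B$-coefficients produced by the inner $a$-cumulants propagate through the nested definition of $\kappa_E^{(\tau)}$, and about the precise sense in which $u(l)$ is independent from $\mc B$ (so that $E[u b u^*] = b$ rather than $\tau(b)\,1$). The remaining manipulations are direct applications of Lemmas \ref{kreweras}, \ref{nch}, \ref{intertwine} and Theorem \ref{linearization}.
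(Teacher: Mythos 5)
Your proposal is correct and follows essentially the same route as the paper's proof: expand via the operator-valued moment--cumulant formula, use vanishing of mixed cumulants to restrict to partitions of the form $\alpha\wr\beta$ with $\alpha\leq\ker\mathbf l$ and $\beta\leq K(\alpha)$, detach the scalar Haar-unitary cumulants (the R-diagonal/alternating condition giving $\alpha\in NC_h^{\epsilon}(2m)$), resum the $\beta$'s into $E^{(K(\alpha))}$, and reparametrize by Lemmas \ref{nch} and \ref{intertwine}, with the Catalan/M\"obius identity handled exactly as in the paper via $\widetilde\sigma\vee\widetilde{1_m}=\overrightarrow{\widehat{K(\sigma)}}$ and $\mu_m(\sigma,1_m)=\mu_m(0_m,K(\sigma))$. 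The step you flag as delicate is dispatched in the paper by the observation that expectations of polynomials in the $u(l),u(l)^*$ with complex coefficients are scalar-valued, which is precisely the justification you anticipate.
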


Note that elements of the form appearing in the statement of the proposition span the
algebra generated by $(u(l),u(l)^*)_{l \in \N}$ and $(d(i))_{i \in I}$, and so this
indeed determines the joint distribution.

\begin{proof}
We have
\begin{equation*}
E[u(l_1)^{\epsilon_1}a(1)\dotsb u(l_{2m})^{\epsilon_{2m}}a(2m)] = \sum_{\alpha \in
NC(4m)}\kappa_E^{\alpha}[u(l_1)^{\epsilon_1},a(1),\dotsc,a(2m)].
\end{equation*}
By freeness, the only non-vanishing cumulants appearing above are those of the form $\tau
\wr \gamma$, where $\tau,\gamma \in NC(2m)$, $\tau \leq \ker \mathbf l$ and $\gamma \leq
K(\tau)$.  So we have
\begin{equation*}
E[u(l_1)^{\epsilon_1}a(1)\dotsb u(l_{2m})^{\epsilon_{2m}}a(2m)] = \sum_{\substack{\tau
\in NC(2m)\\ \tau \leq \ker \mathbf l}} \sum_{\substack{\gamma \in NC(2m)\\ \gamma \leq
K(\tau)}} \kappa_E^{(\tau \wr \gamma)}[u(l_1)^{\epsilon_1},a(1),\dotsc,a(2m)].
\end{equation*}
Since the expectation of any polynomial in $(u(l),u(l)^*)_{l \in \N}$ with complex
coefficients is scalar-valued, it follows that
\begin{align*}
E[u(l_1)^{\epsilon_1}a(1)\dotsb u(l_{2m})^{\epsilon_{2m}}a(2m)] &= \sum_{\substack{\tau \in NC(2m)\\ \tau \leq \ker \mathbf l}} \kappa_E^{(\tau)}[u(l_1)^{\epsilon_1},\dotsc,u(l_{2m})^{\epsilon_{2m}}]\sum_{\substack{\gamma \in NC(2m)\\ \gamma \leq K(\tau)}} \kappa_E^{(\gamma)}[a(1),\dotsc,a(2m)]\\
&= \sum_{\substack{\tau \in NC(2m)\\ \tau \leq \ker \mathbf l}}
\kappa_E^{(\tau)}[u(l_1)^{\epsilon_1},\dotsc,u(l_{2m})^{\epsilon_{2m}}]E^{(K(\tau))}[a(1),\dotsc,a(2m)].
\end{align*}
Since Haar unitaries are $R$-diagonal (\cite[Example 15.4]{ns}), we have
\begin{equation*}
 \kappa_E^{(\tau)}[u(l_1)^{\epsilon_1},\dotsc,u(l_{2m})^{\epsilon_{2m}}]=0
\end{equation*}
unless $\tau \in NC_h^{\epsilon}(2m)$.  By Lemmas \ref{nch} and \ref{intertwine}, we have
\begin{equation*}
E[u(l_1)^{\epsilon_1}a(1)\dotsb u(l_{2m})^{\epsilon_{2m}}a(2m)] =
\sum_{\substack{\pi,\sigma \in NC^{\epsilon}(m)\\ \sigma \leq \pi\\ \widetilde \sigma
\vee \widetilde \pi \leq \ker \mathbf l}} \kappa_E^{(\widetilde \sigma \vee \widetilde
\pi)}[u(l_1)^{\epsilon_1},\dotsc,u(l_{2m})^{\epsilon_{2m}}]E^{(\sigma \wr
K(\pi))}[a(1),\dotsc,a(2m)].
\end{equation*}
So it remains only to show that if $\sigma,\pi \in NC^\epsilon(m)$ and $\sigma \leq \pi$
then
\begin{equation*}
 \mu_m(\sigma,\pi) = \kappa_E^{(\widetilde \sigma \vee \widetilde \pi)}[u(l_1)^{\epsilon_1},\dotsc,u(l_{2m})^{\epsilon_{2m}}].
\end{equation*}
Since the M\"{o}bius function is multiplicative on $NC(m)$, we have
\begin{equation*}
 \mu_m(\sigma,\pi) = \prod_{W \in \pi} \mu_{|W|}(\sigma|_W,1_W),
\end{equation*}
and so it suffices to consider the case $\pi = 1_m$.

By \cite[Proposition 15.1]{ns},
\begin{equation*}
 \kappa_E^{(\widetilde \sigma \vee \widetilde {1_m})}[u(l_1)^{\epsilon_1},\dotsc,u(l_{2m})^{\epsilon_{2m}}] = \prod_{V \in \widetilde \sigma \vee \widetilde {1_m}} (-1)^{|V|/2-1}C_{|V|/2 -1},
\end{equation*}
where $C_n$ is the $n$-th Catalan number.  Since
\begin{equation*}
 \widetilde \sigma \vee \widetilde{1_m} = \overrightarrow{\overleftarrow{\widetilde \sigma} \vee \overleftarrow{\widetilde 1_m}}
= \overrightarrow{\widetilde{K(\sigma)} \vee \widehat{0_m}}= \overrightarrow{\widehat{K(\sigma)}},
\end{equation*}
we have
\begin{equation*}
 \kappa_E^{(\widetilde \sigma \vee \widetilde{1_m})}[u(l_1)^{\epsilon_1},\dotsc,u(l_{2m})^{\epsilon_{2m}}] = \prod_{W \in K(\sigma)} (-1)^{|W|-1}C_{|W| -1}.
\end{equation*}

On the other hand, we have
\begin{align*}
 \mu_m(\sigma,1_m) &= \mu_m(0_m,K(\sigma))\\
&= \prod_{W \in K(\sigma)}\mu_{|W|}(0_W,1_W)\\
&= \prod_{W \in K(\sigma)}(-1)^{|W|-1}C_{|W|-1},
\end{align*}
where we have used the formula for $\mu_m(0_m,1_m)$ from \cite[Proposition 10.15]{ns}.
\end{proof}

\begin{prop}\label{expfunct}
Let $\mc B$ be a unital algebra, $A(1),\dotsc,A(2m) \in M_N(\mc B)$ and $\pi, \sigma \in
NC(m)$. Let $E_N = \tr_N \otimes \mathrm{id}_{\mc B}$.  If $\sigma \leq \pi$, then
\begin{equation*}
 \sum_{\substack{1 \leq j_1,\dotsc,j_{2m} \leq N\\ \widetilde \sigma \leq \ker \mathbf j}} \sum_{\substack{1 \leq i_1,\dotsc,i_{2m} \leq N\\ \widetilde{K(\pi)} \leq \ker \mathbf i}} A(1)_{j_1j_2}A(2)_{i_1i_2}\dotsb A(2m)_{i_{2m-1}i_{2m}} = N^{|\sigma|+|K(\pi)|}E_{N}^{(\sigma \wr K(\pi))}[A(1),\dotsc,A(2m)].
\end{equation*}
\end{prop}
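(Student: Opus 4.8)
The plan is to reduce the identity to the standard index-sum expression for the operator-valued moment functions of matrices, and then to match the two sides by relabeling summation indices. First I would record the general formula: for $\tau \in NC(n)$ and $C(1),\dotsc,C(n) \in M_N(\mc B)$,
\[
 E_N^{(\tau)}[C(1),\dotsc,C(n)] = N^{-|\tau|} \sum_{\substack{1 \leq k_1,\dotsc,k_{2n} \leq N\\ \widetilde\tau \leq \ker \mathbf k}} C(1)_{k_1k_2}C(2)_{k_3k_4}\dotsb C(n)_{k_{2n-1}k_{2n}},
\]
where $\widetilde\tau \in NC_2(2n)$ is the fattening of $\tau$. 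This would be proved by induction on the number of blocks of $\tau$: for $\tau = 1_n$ the pairs of $\widetilde{1_n}$ are $(1,2n),(2,3),\dotsc,(2n-2,2n-1)$, so $\widetilde{1_n} \leq \ker \mathbf k$ forces the column index of each $C(r)$ to equal the row index of $C(r+1)$ cyclically and the sum collapses to $\Tr_N(C(1)\dotsb C(n)) = N\,E_N[C(1)\dotsb C(n)]$; for the inductive step one chooses an interval block $V = \{l+1,\dotsc,l+s\}$ of $\tau$, uses the blockwise description of fattening (as in the proof of Lemma \ref{kreweras}) to see that the index positions $\{2l+1,\dotsc,2(l+s)\}$ form a $\widetilde\tau$-closed set carrying a shifted copy of $\widetilde{1_s}$ while the complement carries a copy of $\widetilde{\tau\setminus V}$, and then sums out the indices attached to $V$ — which occur only in the consecutive factors $C(l+1),\dotsc,C(l+s)$ — to produce the element $N\,E_N^{(s)}(C(l+1),\dotsc,C(l+s)) \in \mc B$ in its correct slot, the remaining sum being identified by the inductive hypothesis applied to $\tau \setminus V$ together with the recursive definition of $E_N^{(\tau)}$ and $|\tau| = |\tau\setminus V| + 1$.

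Next I would apply this formula with $n = 2m$, $C(r) = A(r)$, and $\tau = \sigma \wr K(\pi)$, which lies in $NC(2m)$ precisely because $\sigma \leq \pi$. Since $|\sigma \wr K(\pi)| = |\sigma| + |K(\pi)|$, the proposition becomes the statement that
\[
 \sum_{\substack{1 \leq k_1,\dotsc,k_{4m} \leq N\\ \widetilde{\sigma \wr K(\pi)} \leq \ker \mathbf k}} A(1)_{k_1k_2}A(2)_{k_3k_4}\dotsb A(2m)_{k_{4m-1}k_{4m}}
\]
equals the double sum in the statement. I would introduce the relabeling $j_{2a-1} = k_{4a-3}$, $j_{2a} = k_{4a-2}$, $i_{2a-1} = k_{4a-1}$, $i_{2a} = k_{4a}$ for $1 \leq a \leq m$, a bijection between tuples $\mathbf k \in \{1,\dotsc,N\}^{4m}$ and pairs $(\mathbf j,\mathbf i) \in \{1,\dotsc,N\}^{2m}\times\{1,\dotsc,N\}^{2m}$. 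Since the odd matrix $A(2a-1)$ occupies the index pair $(k_{4a-3},k_{4a-2})$ and the even matrix $A(2a)$ occupies $(k_{4a-1},k_{4a})$, under this relabeling the product above is literally $A(1)_{j_1j_2}A(2)_{i_1i_2}\dotsb A(2m-1)_{j_{2m-1}j_{2m}}A(2m)_{i_{2m-1}i_{2m}}$, matching the summand in the statement.

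The main point is then to verify that the single constraint $\widetilde{\sigma \wr K(\pi)} \leq \ker \mathbf k$ decouples, under this relabeling, into exactly ``$\widetilde\sigma \leq \ker \mathbf j$ and $\widetilde{K(\pi)} \leq \ker \mathbf i$''. Since fattening is defined blockwise, and the blocks of $\sigma \wr K(\pi)$ are the blocks of $\sigma$ placed on the odd positions of $\{1,\dotsc,2m\}$ together with the blocks of $K(\pi)$ placed on the even positions, $\widetilde{\sigma \wr K(\pi)}$ is the disjoint union of the pairs produced by the $\sigma$-blocks and those produced by the $K(\pi)$-blocks. A block $(c_1<\dotsb<c_s)$ of $\sigma$ corresponds to the block $(2c_1-1<\dotsb<2c_s-1)$ of $\sigma \wr K(\pi)$, whose fattening produces $(4c_1-3,4c_s-2),(4c_1-2,4c_2-3),\dotsc,(4c_{s-1}-2,4c_s-3)$; under the relabeling these are precisely the pairs $(j_{2c_1-1},j_{2c_s}),(j_{2c_1},j_{2c_2-1}),\dotsc,(j_{2c_{s-1}},j_{2c_s-1})$ added to $\widetilde\sigma$ for that block, and a parallel computation for a block $(d_1<\dotsb<d_r)$ of $K(\pi)$ returns exactly the pairs $(i_{2d_1-1},i_{2d_r}),(i_{2d_1},i_{2d_2-1}),\dotsc$ defining $\widetilde{K(\pi)}$ on the $i$-indices. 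As the $j$- and $i$-indices come from the disjoint residue classes $\{1,2\}$ and $\{3,0\}$ modulo $4$ among the $k$-indices, the two constraints are independent; hence the $\mathbf k$-sum becomes the advertised double sum.

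I expect the only genuine obstacle to be the bookkeeping in this last step: reconciling the three fattening operations $\sigma \mapsto \widetilde\sigma$, $K(\pi) \mapsto \widetilde{K(\pi)}$ and $\sigma \wr K(\pi) \mapsto \widetilde{\sigma \wr K(\pi)}$ through the wreath product and the index relabeling, and checking that the modular arithmetic lines up. The inductive proof of the general matrix formula and the reduction to the index sum are routine.
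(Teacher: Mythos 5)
Your proposal is correct and takes essentially the same approach as the paper: both reduce the double sum to a single sum indexed by the fattened partition $\widetilde{\sigma \wr K(\pi)}$ (using $\sigma \leq \pi$ to know $\sigma \wr K(\pi) \in NC(2m)$) and then prove the general identity $\sum_{\widetilde\tau \leq \ker \mathbf k} C(1)_{k_1k_2}\dotsb C(n)_{k_{2n-1}k_{2n}} = N^{|\tau|}E_N^{(\tau)}[C(1),\dotsc,C(n)]$ by induction on the number of blocks, collapsing an interval block to $N\,E_N(C(l+1)\dotsb C(l+s))$. The only difference is presentational: you carry out in detail the reindexing and decoupling of constraints that the paper records as a one-line observation.
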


\begin{proof}
First observe that the sum above can be rewritten as
\begin{equation*}
 \sum_{\substack{1 \leq i_1,\dotsc,i_{4m} \leq N\\ \widetilde{\sigma \wr K(\pi)} \leq \ker \mathbf i}} A(1)_{i_1i_2}\dotsb A(2m)_{i_{4m-1}i_{4m}}.
\end{equation*}
So this will follow from the formula
\begin{equation*}
 \sum_{\substack{1 \leq i_1,\dotsc,i_{2m} \leq N \\ \widetilde \sigma \leq \ker \mathbf i}} A(1)_{i_1i_2}\dotsb A(m)_{i_{2m-1}i_{2m}} = N^{|\sigma|} E_N^{(\sigma)}[A(1),\dotsc,A(m)]
\end{equation*}
for any $\sigma \in NC(m)$.

We will prove this by induction on the number of blocks of $m$.  If $\sigma = 1_m$ has
only one block, then we have
\begin{align*}
 \sum_{\substack{1 \leq i_1,\dotsc,i_{2m} \leq N \\ \widetilde \sigma \leq \ker \mathbf i}} A(1)_{i_1i_2}\dotsb A(m)_{i_{2m-1}i_{2m}}  &= \sum_{\substack{1 \leq i_1,\dotsc,i_m \leq N}} A(1)_{i_1i_2}A(2)_{i_2i_3}\dotsb A(m)_{i_mi_1}\\
&= N\cdot E_N(A(1)\dotsb A(m)).
\end{align*}

Suppose now that $V = \{l+1,\dotsc,l+s\}$ is an interval of $\sigma$.  Then
\begin{multline*}
 \sum_{\substack{1 \leq i_1,\dotsc,i_{2m} \leq N \\ \widetilde \sigma \leq \ker \mathbf i}} A(1)_{i_1i_2}\dotsb A(m)_{i_{2m-1}i_{2m}} \\
= \sum_{\substack{1 \leq i_1,\dotsc,i_{2l-2},\\
i_{2(l+s)+1},\dotsc,i_{2m} \leq N \\ \widetilde{\sigma \setminus V} \leq \ker \mathbf i}}A(1)_{i_1i_2}\dotsb \biggl(\sum_{1 \leq j_1,\dotsc,j_s \leq N} A(l+1)_{j_1j_2}\dotsb A(l+s)_{j_sj_1}\biggr)\dotsb A(m)_{i_{2m-1}i_{2m}}\\
=\sum_{\substack{1 \leq i_1,\dotsc,i_{2l-2},i_{2(l+s)+1},\dotsc,i_{2m} \leq N \\
\widetilde{\sigma \setminus V} \leq \ker \mathbf i}}A(1)_{i_1i_2}\dotsb \bigl(N\cdot
E_N(A(l+1)\dotsb A(l+s))\bigr)\dotsb A(m)_{i_{2m-1}i_{2m}},
\end{multline*}
which by induction is equal to
\begin{equation*}
N^{|\sigma|}E_N^{(\sigma \setminus V)}[A(1),\dotsc,A(l)E_N(A(l+1)\dotsb
A(l+s)),\dotsc,A(m)] = N^{|\sigma|}E_N^{(\sigma)}[A(1),\dotsc,A(m)].
\end{equation*}
\end{proof}

\begin{rmk}
We will also need to control the sum appearing in the proposition above for $\sigma,\pi
\in NC(m)$ with $\sigma \not\leq \pi$.  If $\mc B$ is commutative this poses no
difficulty, as then
\begin{multline*}
 \sum_{\substack{1 \leq j_1,\dotsc,j_{2m} \leq N\\ \widetilde \sigma \leq \ker \mathbf j}} \sum_{\substack{1 \leq i_1,\dotsc,i_{2m} \leq N\\ \widetilde{K(\pi)} \leq \ker \mathbf i}} A(1)_{j_1j_2}A(2)_{i_1i_2}\dotsb A(2m)_{i_{2m-1}i_{2m}} \\
=  \biggl(\sum_{\substack{1 \leq j_1,\dotsc,j_{2m} \leq N\\ \widetilde \sigma \leq \ker \mathbf j}}  A(1)_{j_1j_2}\dotsb A(2m-1)_{j_{2m-1}j_{2m}} \biggr)\biggl(\sum_{\substack{1 \leq i_1,\dotsc,i_{2m} \leq N\\ \widetilde{K(\pi)} \leq \ker \mathbf i}}A(2)_{i_1i_2}\dotsb A(2m)_{i_{2m-1}i_{2m}}\biggr)\\
=N^{|\sigma|+|K(\pi)|}
E_N^{(\sigma)}[A(1),\dotsc,A(2m-1)]E_N^{(K(\pi))}[A(2),\dotsc,A(2m)].
\end{multline*}
However, when $\mc B$ is noncommutative it is not clear how to express this sum in terms
of expectation functionals.  Instead, we will use the following bound on the norm:
\end{rmk}

\begin{prop}\label{normbound}
Let $\mc B$ be a unital C$^*$-algebra, and let $A(1),\dotsc,A(2m) \in M_N(\mc B)$.  If
$\sigma,\pi \in NC(m)$ then
\begin{equation*}
 \biggl\| \sum_{\substack{1 \leq j_1,\dotsc,j_{2m} \leq N\\ \widetilde \sigma \leq \ker \mathbf j}} \sum_{\substack{1 \leq i_1,\dotsc,i_{2m} \leq N\\ \widetilde{K(\pi)} \leq \ker \mathbf i}} A(1)_{j_1j_2}A(2)_{i_1i_2}\dotsb A(2m)_{i_{2m-1}i_{2m}} \biggr\| \leq N^{|\sigma|+|K(\pi)|}\|A(1)\|\dotsb \|A(2m)\|.
\end{equation*}
\end{prop}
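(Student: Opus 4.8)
The plan is to turn the purely algebraic sum into an operator estimate. Fix a faithful representation $\mc B\subseteq B(H)$, so that $M_N(\mc B)=M_N(\C)\otimes\mc B\subseteq B(\C^N\otimes H)$ isometrically and the sum $S$ in the statement is an honest operator on $H$; it then suffices to bound $\|S\xi\|$ for $\xi\in H$. I would evaluate the product $A(1)_{j_1j_2}A(2)_{i_1i_2}\dotsb A(2m)_{i_{2m-1}i_{2m}}\xi$ by processing the matrices one at a time from right to left, carrying the not-yet-summed indices along as tensor legs: after processing some trailing matrices one holds a vector in $(\C^N)^{\otimes r}\otimes H$, where the $r$ copies of $\C^N$ record the current values of those index variables which have already occurred once (to the right) and will occur again (to the left). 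Write $\rho=K(\pi)\in NC(m)$; the odd matrices $A(1),A(3),\dotsc$ carry the $\mathbf j$-indices, which are paired by $\widetilde\sigma$, and the even matrices carry the $\mathbf i$-indices, paired by $\widetilde\rho$.

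First I would record the elementary estimates used at each step. For $A\in M_N(\mc B)$, the maps $\xi\mapsto\sum_p e_p\otimes A_{pq}\xi$ (the restriction of $A$ to $e_q\otimes H$) and $e_q\otimes\xi\mapsto A_{pq}\xi$ (a compression of $A$) are contractions up to $\|A\|$; the map $e_p\otimes e_q\otimes v\mapsto A_{pq}v$, obtained by feeding one recorded leg into $A$ and contracting the output $\C^N$ of $A$ against another recorded leg, has norm $\le N^{1/2}\|A\|$; and $\sum_q A_{qq}=(\Tr_N\otimes\mathrm{id}_{\mc B})(A)=N\,E_N(A)$ with $\|E_N(A)\|\le\|A\|$. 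When $A(k)$ is processed, each of its two index slots is either \emph{new} (its variable occurs here for the first time in the right-to-left order) or \emph{recorded}. The decisive combinatorial observation — which I would read off from the definition of the fattening together with the fact that, within a block $V=(v_1<\dotsb<v_s)$ of $\sigma$, the odd matrices $A(2v_1-1),\dotsc,A(2v_s-1)$ occur in the word in exactly that order of increasing index — is that no matrix ever has the configuration ``row recorded, column new''. Instead one finds that $A(2v_1-1)$ has both slots recorded (cost $N^{1/2}\|A\|$: contract two recorded legs), $A(2v_s-1)$ has both slots new (cost $N^{1/2}\|A\|$: create two recorded legs via $v\mapsto\sum_{p,q}e_p\otimes e_q\otimes A_{pq}v$), the intermediate $A(2v_t-1)$ with $2\le t\le s-1$ has row new and column recorded (cost only $\|A\|$, since this operation is simply $e_q\otimes v\mapsto A(e_q\otimes v)$), and a singleton block of $\sigma$ gives a self-loop (cost $N\,\|A\|$ via $\sum_q A_{qq}=N\,E_N(A)$); the identical analysis applies to the even matrices via $\widetilde\rho$.

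To conclude I would simply count. Each block of $\sigma$ of size $s\ge2$ produces exactly one matrix of the first type, one of the second, and $s-2$ of the third, contributing a net factor $N^{1/2}\cdot N^{1/2}=N$; each singleton block of $\sigma$ contributes a factor $N$ through its self-loop; so the odd matrices together contribute $N^{|\sigma|}$, and symmetrically the even matrices contribute $N^{|\rho|}=N^{|K(\pi)|}$, while every $A(k)$ enters exactly once with a factor $\|A(k)\|$. This gives $\|S\xi\|\le N^{|\sigma|+|K(\pi)|}\bigl(\prod_{k}\|A(k)\|\bigr)\|\xi\|$, which is the assertion. The step that really needs care — and that I expect to be the crux of writing this out cleanly — is exactly the bookkeeping of the previous paragraph: verifying that ``row recorded, column new'' is excluded (this is precisely what lets one avoid the only genuinely costly move, namely contracting a row index, since a block-column of $A$ viewed as a \emph{row} operator is not controlled by $\|A\|$), and checking that the powers of $N$ bought by the paired (both-recorded)/(both-new) matrices and by the self-loops sum to precisely $|\sigma|$ and $|K(\pi)|$. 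Everything else is a routine chain of Cauchy–Schwarz estimates on tensor legs.
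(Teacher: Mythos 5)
Your proof is correct, but it proceeds by a genuinely different mechanism than the paper's. The paper first merges the two index families into a single sum over indices constrained by the fattening of $\sigma \wr K(\pi)$ (extending $\pi \mapsto \widetilde{\pi}$ to arbitrary, possibly crossing, partitions), and then observes that for any partition $\tau$ the constrained sum equals $(\Tr_{N^{|\tau|}} \otimes \mathrm{id}_{\mc B})(X(1)\dotsb X(m))$, where $X(l)$ is $A(l)$ embedded into $\bigl(\bigotimes_{V \in \tau} M_N^V\bigr) \otimes \mc B \simeq M_{N^{|\tau|}}(\mc B)$ in the tensor factor labelled by the block containing $l$; the bound is then immediate because $\tr_{N^{|\tau|}} \otimes \mathrm{id}_{\mc B}$ is a contractive conditional expectation. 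You instead keep the interleaved word and run a right-to-left tensor-leg estimate on a faithful representation, and your bookkeeping is accurate: within a block $(v_1<\dotsb<v_s)$ the fattening pairs the column of the $t$-th matrix with the row of the $(t+1)$-st and the row of the first with the column of the last, so in right-to-left order one indeed gets exactly one both-new matrix (cost $N^{1/2}\|A\|$), one both-recorded matrix (cost $N^{1/2}\|A\|$), $s-2$ matrices with row new and column recorded (cost $\|A\|$, just applying $A$ to a leg), and $N\|A\|$ for singletons, with the dangerous transpose-type move ``row recorded, column new'' (which over a noncommutative $\mc B$ is only $O(N^{1/2}\|A\|)$, not $O(\|A\|)$) never occurring; the powers of $N$ then sum to $|\sigma|+|K(\pi)|$ as you say, and since the two pairings act on disjoint legs no relation between $\sigma$ and $\pi$ is needed. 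What each approach buys: the paper's trace trick is shorter, needs no case analysis, and absorbs the crossing combined partition automatically, while your argument is more elementary and makes transparent exactly where each factor of $N$ arises and why noncommutativity of $\mc B$ causes no loss --- the excluded configuration is precisely the row/column asymmetry that underlies the failure of such bounds for transposes over a C$^*$-algebra.
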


\begin{proof}
For this proof, we extend the definition of $\widetilde \pi$ to all partitions $\pi \in
\mc P(m)$ in the obvious manner. We can rewrite expression above as
\begin{equation*}
\sum_{\substack{1 \leq i_1,\dotsc,i_{4m} \leq N\\ \widetilde{\sigma \wr K(\pi)} \leq \ker
\mathbf i}} A(1)_{i_1i_2}\dotsb A(2m)_{i_{4m-1}i_{4m}},
\end{equation*}
and so the result will follow from
\begin{equation*}
 \biggl\|\sum_{\substack{1 \leq i_1,\dotsc,i_{2m} \leq N\\ \widetilde \sigma \leq \ker \mathbf i}} A(1)_{i_1i_2}\dotsb A(m)_{i_{2m-1}i_{2m}}\biggr\| \leq N^{|\sigma|}\|A(1)\|\dotsb \|A(m)\|
\end{equation*}
for any partition $\sigma \in \mc P(m)$.

The idea now is to realize this expression as the trace of a larger matrix.  For each $V
\in \sigma$, let $M_N^V$ be a copy of $M_N(\C)$.  Consider the algebra
\begin{equation*}
 \bigotimes_{V \in \sigma} M_N^V \simeq M_{N^{|\sigma|}}(\C),
\end{equation*}
with the natural unital inclusions $\iota_V$ of $M_N^V$ for $V \in \sigma$.  For $1 \leq
l \leq m$, let
\begin{equation*}
 X(l) = \bigl(\iota_{\sigma(l)} \otimes \mathrm{id}_{\mc B}\bigr) A(l) \in \biggl(\bigotimes_{V \in \sigma} M_N^V\biggr) \otimes \mc B \simeq M_{N^{|\sigma|}}(\mc B),
\end{equation*}
where we have used the notation $\sigma(l)$ for the block of $\sigma$ which contains $l$.

In other words, $X(l)$ is the matrix indexed by maps $i:\sigma \to [N]= \{1,\dotsc,N\}$
such that
\begin{equation*}
 X(l)_{ij} = A(l)_{i(\sigma(l))j(\sigma(l))} \prod_{\substack{V \in \sigma\\ l \notin V}} \delta_{i(V)j(V)}.
\end{equation*}

Consider now the trace
\begin{align*}
 (\Tr_{N^{|\sigma|}} \otimes \mathrm{id}_{\mc B})(X(1)\dotsb X(m)) &= \sum_{\substack{i_1,\dotsc,i_m\\ i_l:\sigma \to [N]}} X(1)_{i_1i_2}\dotsb X(m)_{i_mi_1}\\
&= \sum_{\substack{i_1,\dotsc,i_m\\ i_l:\sigma \to [N]}}
A(1)_{i_1(\sigma(1))i_2(\sigma(1))}\dotsb A(m)_{i_m(\sigma(m))i_1(\sigma(m))} \prod_{1
\leq l \leq m} \prod_{\substack{V \in \sigma\\ l \notin V}}
\delta_{i_l(V)i_{\gamma(l)}(V)},
\end{align*}
where $\gamma \in S_m$ is the cyclic permutation $(123\dotsb m)$.  The nonzero terms in
this sum are obtained as follows:  For each block $V = (l_1 < \dotsb < l_s)$ of $\sigma$,
choose $1 \leq i_{l_1}(V),i_{\gamma(l_1)}(V),\dotsc,i_{l_s}(V),i_{\gamma(l_s)}(V) \leq N$
with the restrictions $i_{\gamma(l_1)}(V) = i_{l_2}(V),\dotsc,i_{\gamma(l_{s-1})}(V) =
i_{l_s}(V)$ and $i_{\gamma(l_s)}(V) = i_{l_1}(V)$.  Comparing with the definition of
$\widetilde \sigma$, it follows that
\begin{equation*}
 (\Tr_{N^{|\sigma|}} \otimes \mathrm{id}_{\mc B})(X(1)\dotsb X(m)) =  \sum_{\substack{1 \leq i_1,\dotsc,i_{2m} \leq N \\ \widetilde \sigma \leq \ker \mathbf i}} A(1)_{i_1i_2}\dotsb A(m)_{i_{2m-1}i_{2m}}
\end{equation*}
is the expression to be bounded.  However, $(\tr_{N^{|\sigma|}} \otimes \mathrm{id}_{\mc
B}) = N^{-|\sigma|}(\Tr_{N^{|\sigma|}} \otimes \mathrm{id}_{\mc B})$ is a contractive
conditional expectation onto $\mc B$ and so
\begin{equation*}
 \|(\Tr_{N^{|\sigma|}} \otimes \mathrm{id}_{\mc B})(X(1)\dotsb X(m))\| \leq N^{|\sigma|}\|X(1)\|\dotsb\|X(m)\|.
\end{equation*}
Since $(\iota_V \otimes \mathrm{id}_{\mc B})$ is a contractive $*$-homomorphism, we have
$\|X(l)\| = \|(\iota_{\sigma(l)} \otimes \mathrm{id}_{\mc B})(A(l))\| \leq \|A(l)\|$ and
the result follows.
\end{proof}

\begin{rmk*}
We are now prepared to prove the main theorem.
\end{rmk*}

\begin{proof}[Proof of Theorem \ref{mainthm}]
Fix $p_1,\dotsc,p_{2m} \in \mc B \langle t(i)| i \in I \rangle$, and set $A_N(k) =
p_k(D_N)$ for $1 \leq k \leq 2m$.  For notational simplicity, we will suppress the
subscript $N$ in our computations.

Let $l_1,\dotsc,l_{2m} \in \N$, $\epsilon_1,\dotsc,\epsilon_{2m} \in \{1,*\}$ and
consider
\begin{multline*}
(\psi_N^{*\infty} \otimes E_N)[U(l_1)^{\epsilon_1}A(1)U(l_2)^{\epsilon_2}\dotsb U(l_{2m})^{\epsilon_{2m}}A(2m)]\\
= (\psi_N^{*\infty} \otimes \mathrm{id}_{\mc B}) N^{-1}\sum_{\substack{1 \leq i_1,\dotsc,i_{4m} \leq N}} (U(l_1)^{\epsilon_1})_{i_1i_2}A(1)_{i_2i_3}(U(l_2)^{\epsilon_2})_{i_3i_4}\dotsb A(2m)_{i_{4m}i_1}\\
= \sum_{1 \leq i_1,\dotsc,i_{4m} \leq N}
N^{-1}\psi_N^{*\infty}\bigl[(U(l_1)^{\epsilon_1})_{i_1i_2}\dotsb
(U(l_{2m})^{\epsilon_{2m}})_{i_{4m-1}i_{4m}}\bigr]A(1)_{i_2i_3}\dotsb A(2m)_{i_{4mi_1}}.
\end{multline*}
By Corollary \ref{freeadjint}, this is equal to
\begin{equation*}
\sum_{1 \leq i_1,\dotsc,i_{4m} \leq N}
N^{-1}\psi_N^{*\infty}\bigl[U(l_1)^{\epsilon_1}_{i_1i_2}U(l_2)^{\epsilon_2}_{i_4i_3}\dotsb
U(l_{2m})^{\epsilon_{2m}}_{i_{4m}i_{4m-1}}\bigr]A(1)_{i_2i_3}\dotsb A(2m)_{i_{4mi_1}}.
\end{equation*}
After reindexing, this becomes
\begin{equation*}
 \sum_{\substack{1 \leq i_1,\dotsc,i_{2m} \leq N}} \sum_{1 \leq j_1,\dotsc,j_{2m} \leq N} \negthickspace \negthickspace \negthickspace \negthickspace N^{-1}\psi_N^{*\infty}\bigl[U(l_1)^{\epsilon_1}_{i_{2m}j_1}U(l_2)^{\epsilon_2}_{i_1j_2}\dotsb U(l_{2m})^{\epsilon_{2m}}_{i_{2m-1}j_{2m}}\bigr]A(1)_{j_1j_2}A(2)_{i_1i_2}\dotsb A(2m)_{i_{2m-1}i_{2m}}.
\end{equation*}
Applying Corollary \ref{integrate}, we have
\begin{multline*}
 \sum_{\substack{1 \leq i_1,\dotsc,i_{2m} \leq N}} \sum_{1 \leq j_1,\dotsc,j_{2m} \leq N} \sum_{\substack{\pi,\sigma \in NC^{\epsilon}(m)\\ \widetilde \pi \leq \overrightarrow{\ker \mathbf i} \wedge \ker \mathbf l\\ \widetilde \sigma \leq \ker \mathbf j \wedge \ker \mathbf l}} \negthickspace \negthickspace \negthickspace N^{-|K(\pi)|-|\sigma|}(\mu_{m}(\sigma,\pi) + O(N^{-2}))A(1)_{j_1j_2}A(2)_{i_1i_2}\dotsb A(2m)_{i_{2m-1}i_{2m}}\\
= \negthickspace \negthickspace\sum_{\substack{\pi,\sigma \in NC^{\epsilon}(m)\\
\widetilde \pi \leq \ker \mathbf l\\ \widetilde \sigma \leq \ker \mathbf l}}
\negthickspace (\mu_m(\sigma,\pi) + O(N^{-2}))N^{-|K(\pi)|-|\sigma|}\negthickspace
\negthickspace \negthickspace \sum_{\substack{1 \leq j_1,\dotsc,j_{2m} \leq N\\
\widetilde \sigma \leq \ker \mathbf j}}\sum_{\substack{1 \leq i_1,\dotsc,i_{2m} \leq N\\
\widetilde{K(\pi)} \leq \ker \mathbf i}} A(1)_{j_1j_2}A(2)_{i_1i_2}\dotsb
A(2m)_{i_{2m-1}i_{2m}}.
\end{multline*}
By Propositions \ref{expfunct} and \ref{normbound}, this is equal to
\begin{equation*}
 \sum_{\substack{\pi,\sigma \in NC^{\epsilon}(m)\\ \sigma \leq \pi\\ \widetilde \pi \vee \widetilde \sigma \leq \ker \mathbf l}} \mu_m(\sigma,\pi)E_N^{(\sigma \wr K(\pi))}[A(1),\dotsc,A(2m)],
\end{equation*}
up to $O(N^{-2})$ with respect to the norm on $\mc B$.  Set $a(k) = p_k(d_N)$ for $1 \leq k \leq 2m$,
then by Proposition \ref{limitform} we have
\begin{align*}
 E[u(l_1)^{\epsilon_1}a(1)\dotsb u(l_{2m})^{\epsilon_{2m}}a(2m)] &=  \sum_{\substack{\pi,\sigma \in NC^{\epsilon}(m)\\ \sigma \leq \pi\\ \widetilde \pi \vee \widetilde \sigma \leq \ker \mathbf l}} \mu_m(\sigma,\pi)E^{(\sigma \wr K(\pi))}[a(1),\dotsc,a(2m)]\\
&=  \sum_{\substack{\pi,\sigma \in NC^{\epsilon}(m)\\ \sigma \leq \pi\\ \widetilde \pi
\vee \widetilde \sigma \leq \ker \mathbf l}} \mu_m(\sigma,\pi)E_N^{(\sigma \wr
K(\pi))}[A(1),\dotsc,A(2m)],
\end{align*}
and the result now follows immediately.
\end{proof}

\begin{rmk*}\textbf{Randomly quantum rotated matrices.}  It follows easily from Theorem \ref{mainthm2} and the definition of asymptotic freeness that under the hypotheses of the theorem, the sets
\begin{equation*}
 (\{D_N(i): i \in I\}, \{U_N(1)D_N(i)U_N(1)^*: i \in I\}, \{U_N(2)D_N(i)U_N(2)^*: i \in I\},\dotsc )
\end{equation*}
are asymptotically (infinitesimally) free with amalgamation over $\mc B$ as $N \to
\infty$.  The condition on existence of a limiting joint distribution can be weakened
slightly as follows:
\end{rmk*}

\begin{cor}\label{qrotate}
Let $\mc B$ be a unital C$^*$-algebra, and let $(D_N(i))_{i \in I}$ and $(D'_N(j))_{j \in
J}$ be two families of matrices in $M_N(\mc B)$ for $N \in \N$.  Suppose that there is a
finite constant $C$ such that $\|D_N(i)\| \leq C$ and $\|D'_N(j)\| \leq C$ for $N \in
\N$, $i \in I$ and $j \in J$.  For each $N \in \N$, let $U_N$ be a $N \times N$ Haar
quantum unitary random matrix, independent from $\mc B$.
\begin{enumerate}
 \item If the joint distributions of $(D_N(i))_{i \in I}$ and $(D'_N(j))_{j \in J}$ both converge weakly (in norm or in $L^2$ with respect to a faithful trace), then  $(U_ND_N(i)U_N^*)_{i \in I}$ and $(D'_N(j))_{j \in J}$ are asymptotically free with amalgamation over $\mc B$ as $N \to \infty$.
 \item If the joint distribution of $(D_N(i))_{i \in I}$ and $(D'_N(j))_{j \in J}$ both converge infinitesimally (in norm or in $L^2$ with respect to a faithful trace), then $(U_ND_N(i)U_N^*)_{i \in I}$ and $(D'_N(j))_{j \in J}$ are asymptotically infinitesimally free with amalgamation over $\mc B$.
\end{enumerate}

\end{cor}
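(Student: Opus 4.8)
The plan is to deduce Corollary~\ref{qrotate} from Theorem~\ref{mainthm} applied to the \emph{combined} family and a single quantum unitary; the point is that Theorem~\ref{mainthm} makes no convergence hypothesis, so the fact that the mixed moments of $(D_N(i))_{i\in I}$ and $(D'_N(j))_{j\in J}$ need not converge is harmless there. Concretely, index the union of the two families by $I\sqcup J$, write $\widehat D_N=(\widehat D_N(i))_{i\in I\sqcup J}$ for the resulting uniformly bounded family in $M_N(\mc B)$, and fix a model $(\mc A,E)$ as in Theorem~\ref{mainthm} containing one Haar unitary $u$, independent from $\mc B$ and free over $\mc B$ from a family $(\widehat d_N(i))_{i\in I\sqcup J}$ which, for each $N$, has the same $\mc B$-valued joint distribution as $\widehat D_N$. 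Every $\mc B$-valued moment of $\{U_ND_N(i)U_N^*:i\in I\}\cup\{D'_N(j):j\in J\}$ is, after collapsing the factors $U_N^*U_N=1$, the $(\psi_N^{*\infty}\otimes E_N)$-moment of a word in $U_N,U_N^*$ and $\widehat D_N$ in which the $U_N$'s occur only as blocks $U_N(\text{polynomial in }D_N)U_N^*$; Theorem~\ref{mainthm} (whose proof applies verbatim to such words, including those carrying a leading polynomial factor, which appear when the word starts with a $D'_N$) gives that this moment agrees, up to $O(N^{-2})$ in norm, with the corresponding moment of the word in $u,u^*$ and $\widehat d_N$ in the model.

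The key algebraic input is the standard fact that if $u$ is a Haar unitary, independent from $\mc B$, and free with amalgamation over $\mc B$ from a subalgebra $\mc C\supseteq\mc B$, then $u\mc Cu^*$ and $\mc C$ are free with amalgamation over $\mc B$; this follows directly from the definition of freeness over $\mc B$ together with the identities $u^*u=1$ and $u\mc Bu^*=\mc B$ (a centered alternating word in $u\mc Cu^*$ and $\mc C$ rewrites as a centered alternating word in the algebra generated by $u,u^*,\mc B$ and in $\mc C$). Applying this in the model with $\mc C$ the $\mc B$-algebra generated by all $\widehat d_N(i)$ shows that $\{u\widehat d_N(i)u^*:i\in I\}$ is free over $\mc B$ from $\{\widehat d_N(j):j\in J\}=\{d'_N(j):j\in J\}$ with respect to $E$. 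Hence each $E$-moment of the combined model family is a fixed universal polynomial — the one dictated by freeness with amalgamation — in the $\mc B$-valued moments of $\{u\widehat d_N(i)u^*:i\in I\}$ alone and of $\{d'_N(j):j\in J\}$ alone. The latter equal the moments of $(D'_N(j))_{j\in J}$, which converge by hypothesis; and, since $u$ is a Haar unitary free over $\mc B$ from $\{\widehat d_N(i):i\in I\}$, the former are in turn universal polynomials (again by the same fact) in the moments of $(D_N(i))_{i\in I}$, which also converge. Therefore all $\mc B$-valued moments of the combined model family converge, and by the $O(N^{-2})$ comparison so do those of $\{U_ND_N(i)U_N^*\}_{i\in I}\cup\{D'_N(j)\}_{j\in J}$; this exhibits the limiting $\mc B$-valued joint distribution $E^\infty$. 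Since freeness over $\mc B$ of the two blocks holds in the model for every $N$, it passes to $E^\infty$, which proves part~(1). (When $\mc B$ carries a faithful normal trace and the hypotheses are in $L^2$, the same argument applies with $\|\cdot\|$ replaced by $|\;|_2$.)

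For part~(2) I would invoke Proposition~\ref{infexample} with $K=\{1/N:N\in\N\}$ and $s=1/N$: take the ambient $\mc B$-valued probability space to be $(M_N(\C)\otimes A_u(N)^{*\infty}\otimes\mc B,\ \psi_N^{*\infty}\otimes E_N)$, the variables $A(s)$ to be $\{U_ND_N(i)U_N^*:i\in I\}\cup\{D'_N(j):j\in J\}$ partitioned into the $I$-block and the $J$-block, and the comparison family $a(s)$ to live in the amalgamated free product over $\mc B$ of the $\mc B$-algebra generated by $\{U_ND_N(i)U_N^*:i\in I\}$ and the $\mc B$-algebra generated by $\{D'_N(j):j\in J\}$. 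By construction the per-block distributions of $a(s)$ match those of $A(s)$ exactly and the two blocks of $a(s)$ are free over $\mc B$, so the first two conditions on the comparison family hold; the remaining one, that $\|E_s[p(A(s))]-F[p(a(s))]\|=o(1/N)$ for all $p$, is exactly where the $O(N^{-2})$ rate of Theorem~\ref{mainthm} is used, together with the observation (as in part~(1)) that $F[p(a(s))]$ differs from the model value only through the $O(N^{-2})$ discrepancy between the true and the model single-block distribution of $\{U_ND_N(i)U_N^*:i\in I\}$. Existence of $E$ and $E'$ follows because $F[p(a(s))]$ is a universal polynomial in the per-block moments, and these converge infinitesimally by hypothesis — the $I$-block inheriting infinitesimal convergence from $(D_N(i))_{i\in I}$ through the same universal-polynomial relations — while the norm bound is immediate from $\|U_ND_N(i)U_N^*\|=\|D_N(i)\|\le C$. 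Proposition~\ref{infexample} then yields that the $I$-block and the $J$-block are infinitesimally free over $\mc B$ with respect to the limiting pair $(E,E')$, which is part~(2).

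The main obstacle is the one resolved in the second paragraph: controlling the limiting joint distribution of the quantum-rotated family \emph{without} assuming that the mixed $D_N$--$D'_N$ moments converge. This is exactly what the amalgamated ``$u\mc Cu^*$ is free from $\mc C$'' fact takes care of — under the random quantum rotation the genuinely mixed information between $D_N$ and $D'_N$ is washed out, leaving only the separately convergent one-block data. For part~(2) the extra delicate point is checking that the mismatch in the last condition of Proposition~\ref{infexample} is genuinely $o(1/N)$ rather than merely $o(1)$, which is guaranteed precisely by the sharper $O(N^{-2})$ rate in Theorem~\ref{mainthm}.
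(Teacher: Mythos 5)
Your proposal is correct in substance, but it reaches the corollary by a more abstract route than the paper. The paper simply re-opens the proof of Theorem \ref{mainthm}: with $A(k)=p_k(D_N)$ and $B(k)=q_k(D'_N)$ it records that $(\psi_N^{*\infty}\otimes E_N)[UA(1)U^*B(1)\dotsb UA(m)U^*B(m)]$ equals $\sum_{\sigma\le\pi}\mu_m(\sigma,\pi)\,E_N^{(\sigma\wr K(\pi))}[A(1),B(1),\dotsc,A(m),B(m)]$ up to $O(N^{-2})$, notes that the blocks of $\sigma\wr K(\pi)$ never mix $A$'s with $B$'s, so the right-hand side involves only the two separately convergent distributions, and then concludes by Theorem \ref{mainthm2}. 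You keep Theorem \ref{mainthm} closed, apply it to the combined family, and instead import the amalgamated conjugation lemma ($u\mc Cu^*$ free from $\mc C$ over $\mc B$ for a Haar unitary $u$ commuting with $\mc B$ and free from $\mc C$), so that in the model the mixed moments are the amalgamated-free-product recombination of the two marginals; this is exactly the abstract content of the paper's explicit formula (compare Proposition \ref{limitform}), so the mechanism is the same, but your version is more modular and avoids the partition combinatorics, at the cost of two verifications you only assert. First, words beginning with a $D'_N$-polynomial are not literally of the form treated in Theorem \ref{mainthm}; rather than claiming the proof applies ``verbatim'', note that inserting $U_NU_N^*=1$ (and $uu^*=1$ in the model) in front puts such a word exactly into the form $U^{\epsilon_1}p_1\dotsb U^{\epsilon_{2k}}p_{2k}$, so the theorem applies as stated. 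Second, your ``universal polynomial'' step is really the continuity of the amalgamated free product construction: the mixed moments are nested expectation functionals in which inner expectations occur as $N$-dependent coefficients, so passing to the limit (in norm, in $L^2$, and at first order in $1/N$ for part (2)) requires the uniform bound $C$ together with contractivity of the conditional expectations; this is routine, and the paper's one-line conclusion relies on the same point implicitly. Two harmless simplifications: the single-block distribution of $(U_ND_N(i)U_N^*)_{i\in I}$ coincides with that of $(D_N(i))_{i\in I}$ exactly, not just up to $O(N^{-2})$ (since $\psi_N(U_{ij}U_{kl}^*)=\delta_{ik}\delta_{jl}/N$, and $E[ucu^*]=E[c]$ in the model), and when invoking Proposition \ref{infexample} you should assemble the comparison families for all $N$ into a single $\mc B$-valued probability space (say an amalgamated free product of the two ``columns'') so that its hypothesis of freeness of the sets gathered over all $s$ is literally met.
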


\begin{proof}
The only condition of Theorem \ref{mainthm2} which is not satisfied is that $\{D_N(i): i
\in I\} \cup \{D'_N(j): j \in J\}$ should have a limiting (infinitesimal) joint
distribution as $N \to \infty$.  We can see that this is not an issue as follows.  Let
$p_1,\dotsc,p_m \in B \langle t(i)| i \in I\rangle$ and $q_1,\dotsc,qm \in B \langle
t(j)| j \in J \rangle$ and set $A_N(k) = p_k(D_N)$, $B_N(k) = q_k(D'_N)$ for $1 \leq k
\leq m$.  From the proof of Theorem \ref{mainthm}, we have
\begin{equation*}
 (\psi_N \otimes E_N)[UA(1)U^*B(1)\dotsb UA(m)U^*B(m)] = \sum_{\substack{\pi,\sigma \in NC(m)\\ \sigma \leq \pi}} \mu_m(\sigma,\pi) E_N^{(\sigma \wr K(\pi))}[A(1),B(1),\dotsc,A(m),B(m)],
\end{equation*}
up to $O(N^{-2})$.  But the right hand side depends only on the distributions of
$(D(i))_{i \in I}$ and $(D'(j))_{j \in J}$, and so the result follows from Theorem
\ref{mainthm2}.
\end{proof}

\begin{rmk*} \textbf{Classical Haar unitary random matrices.}  In the remainder of this section, we will discuss the failure of these results for classical Haar unitaries.  First we show that if $\mc B$ is finite dimensional, then classical Haar unitaries are sufficient.
\end{rmk*}

\begin{prop}\label{finitedim}
Let $\mc B$ be a finite dimensional C$^*$-algebra, and let $(D_N(i))_{i \in I}$ be a
family of matrices in $M_N(\mc B)$ for each $N \in \N$.  Assume that there is a finite
constant $C$ such that $\|D_N(i)\| \leq C$ for all $N \in \N$ and $i \in I$.  For each $N
\in \N$, let $(U_N(l))_{l \in \N}$ be a family of independent $N \times N$ Haar unitary
random matrices, independent from $\mc B$.  Let $(u(l),u(l)^*)_{l \in \N}$ and
$(d_N(i))_{i \in I, N \in \N}$ be random variables in a $\mc B$-valued probability space
$(\mc A, E: \mc A \to \mc B)$ such that
\begin{enumerate}
\item $(u(l),u(l)^*)_{l \in \N}$ is free from $(d_N(i))_{i \in I}$ with respect to $E$ for each $N \in \N$.
 \item $(\{u(l),u(l)^*\})_{l \in \N}$ is a free family with respect to $E$, and $u(l)$ is a Haar unitary, independent from $\mc B$ for each $l \in \N$.
\item $(d_N(i))_{i \in I}$ has the same $\mc B$-valued joint distribution with respect to $E$ as $(D_N(i))_{i \in I}$ has with respect to $E_N$.

\end{enumerate}
Then for any polynomials $p_1,\dotsc,p_{2m} \in \mc B \langle t(i): i \in I \rangle$,
$l_1,\dotsc,l_{2m} \in \N$ and $\epsilon_1,\dotsc,\epsilon_{2m} \in \{1,*\}$,
\begin{equation*}
 \bigl\|(\psi_N^{*\infty} \otimes E_N)[U_N(l_1)^{\epsilon_1}p_1(D_N)\dotsb U_N(l_{2m})^{\epsilon_{2m}}p_{2m}(D_N)] - E[u(l_1)^{\epsilon_1}p_1(d_N)\dotsb u(l_{2m})^{\epsilon_{2m}}p_{2m}(d_N)]\bigr\|
\end{equation*}
is $O(N^{-2})$ as $N \to \infty$.

\end{prop}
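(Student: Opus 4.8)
The plan is to reduce the statement for finite-dimensional $\mc B$ to the scalar case $\mc B = \C$, and then to obtain the scalar case by repeating the proof of Theorem \ref{mainthm} with the Weingarten formula for the classical unitary group $U(N)$ in place of the one for $A_u(N)$; finite-dimensionality of $\mc B$ enters only in the reduction, but there it is essential. First I would fix a linear basis $b_1, \dots, b_r$ of $\mc B$ with dual functionals $\phi_1, \dots, \phi_r \in \mc B^*$, and for $A \in M_N(\mc B) = M_N(\C) \otimes \mc B$ write $A = \sum_{\beta=1}^r A^{[\beta]} \otimes b_\beta$, where $A^{[\beta]} = (\mathrm{id}_{M_N} \otimes \phi_\beta)(A) \in M_N(\C)$ satisfies $\| A^{[\beta]} \| \le \| \phi_\beta \| \, \| A \|$. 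With $A_N(k) = p_k(D_N)$ there is then a constant $C'$, independent of $N$, such that $\| A_N(k)^{[\beta]} \| \le C'$ for all $k$ and $\beta$. Since each $U_N(l)^{\epsilon_l}$ commutes with $\mc B$, multilinear expansion over the basis gives, writing $\alpha = (\alpha_1, \dots, \alpha_{2m}) \in \{1, \dots, r\}^{2m}$,
\begin{equation*}
(\psi_N^{*\infty} \otimes E_N)\bigl[ U_N(l_1)^{\epsilon_1} A_N(1) \dotsb U_N(l_{2m})^{\epsilon_{2m}} A_N(2m) \bigr] = \sum_{\alpha} \Phi_N(\alpha) \, b_{\alpha_1} \dotsb b_{\alpha_{2m}},
\end{equation*}
where $\Phi_N(\alpha) = (\psi_N^{*\infty} \otimes \tr_N)\bigl[ U_N(l_1)^{\epsilon_1} A_N(1)^{[\alpha_1]} \dotsb U_N(l_{2m})^{\epsilon_{2m}} A_N(2m)^{[\alpha_{2m}]} \bigr] \in \C$.

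On the limiting side, Proposition \ref{limitform} expresses $E[u(l_1)^{\epsilon_1} p_1(d_N) \dotsb u(l_{2m})^{\epsilon_{2m}} p_{2m}(d_N)]$ as $\sum \mu_m(\sigma, \pi) E^{(\sigma \wr K(\pi))}[p_1(d_N), \dots, p_{2m}(d_N)]$, the sum over $\sigma \le \pi$ in $NC^{\epsilon}(m)$ with $\widetilde \pi \vee \widetilde \sigma \le \ker \mathbf l$. By hypothesis (3), each $\mc B$-valued mixed moment of the $p_k(d_N)$ coincides with the corresponding moment of the $A_N(k)$, so $E^{(\sigma \wr K(\pi))}[p_1(d_N), \dots] = E_N^{(\sigma \wr K(\pi))}[A_N(1), \dots, A_N(2m)]$. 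Moreover $\sigma \wr K(\pi) \in NC(2m)$, and for any $\rho \in NC(2m)$ the recursive definition of $E_N^{(\rho)}$ together with $E_N[(C \otimes b)(C' \otimes b')] = \tr_N(CC') \otimes bb'$ yields $E_N^{(\rho)}[C_1 \otimes b_1, \dots, C_{2m} \otimes b_{2m}] = \tr_N^{(\rho)}[C_1, \dots, C_{2m}] \cdot b_1 \dotsb b_{2m}$. Hence the limiting side expands as $\sum_{\alpha} \Psi_N(\alpha) \, b_{\alpha_1} \dotsb b_{\alpha_{2m}}$, where $\Psi_N(\alpha) = \sum \mu_m(\sigma, \pi) \tr_N^{(\sigma \wr K(\pi))}[A_N(1)^{[\alpha_1]}, \dots, A_N(2m)^{[\alpha_{2m}]}]$. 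Since there are only finitely many multi-indices $\alpha$ and $\| b_{\alpha_1} \dotsb b_{\alpha_{2m}} \|$ is bounded, it suffices to prove $| \Phi_N(\alpha) - \Psi_N(\alpha) | = O(N^{-2})$ for each fixed $\alpha$, which is exactly the proposition in the scalar case $\mc B = \C$.

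For $\mc B = \C$, I would establish, for $C_1, \dots, C_{2m} \in M_N(\C)$ with $\| C_k \| \le C'$, the estimate
\begin{equation*}
(\psi_N^{*\infty} \otimes \tr_N)\bigl[ U_N(l_1)^{\epsilon_1} C_1 \dotsb U_N(l_{2m})^{\epsilon_{2m}} C_{2m} \bigr] = \sum \mu_m(\sigma, \pi) \, \tr_N^{(\sigma \wr K(\pi))}[C_1, \dots, C_{2m}] + O(N^{-2})
\end{equation*}
(with the same index set for $\sigma \le \pi$ as above) by running the proof of Theorem \ref{mainthm} with the classical Weingarten formula: after rewriting each $U^*$-entry as a conjugate entry with transposed indices, substitute $\int_{U(N)} U_{i_1 j_1} \dotsb \overline{U_{i_k j_k}} \, dU = \sum_{\sigma, \tau \in S_k} \mathrm{Wg}(N, \sigma \tau^{-1}) \prod_l \delta_{i_l, i_{\sigma(l)}} \delta_{j_l, j_{\tau(l)}}$, whose coefficients satisfy $\mathrm{Wg}(N, \cdot) = N^{-(2k - \#\mathrm{cyc})}(\mu(\cdot) + O(N^{-2}))$ with no odd-order correction \cite{col1,colsn1}. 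Since the entries are scalars, every resulting index sum factors into a product of normalized traces of words in the $C_k$'s (the commutative case of Proposition \ref{expfunct}), whose combined modulus is at most $(C')^{2m}$; the permutation pairs contributing at order $N^0$ are the geodesic ones (those for which the Cayley distances add), and these correspond to the pairs $\sigma \le \pi$ of Proposition \ref{limitform} and reproduce the displayed leading term by the same Kreweras-complement manipulations as in the proof of Theorem \ref{mainthm}, while every other pair contributes at order $N^{-2}$ or smaller. As $S_k \times S_k$ is finite, summing these errors is harmless.

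The delicate point is this last step: in contrast with $A_u(N)$, the classical Weingarten sum ranges over all of $S_k \times S_k$ rather than over noncrossing pairings, so one must isolate the geodesic configurations that survive at leading order, confirm that their combinatorial weights assemble precisely into the free expression of Proposition \ref{limitform}, and verify that the first correction is genuinely $O(N^{-2})$ and not $O(N^{-1})$ — which is where the parity of the classical Weingarten expansion is used. This is, however, essentially the classical asymptotic (infinitesimal) freeness of $U_N A U_N^*$ and $B$, refined by the Weingarten estimates of \cite{col1,colsn1}. It bears emphasizing that the reduction of the first two paragraphs genuinely requires $\mc B$ finite-dimensional: for infinite-dimensional $\mc B$ the sum over basis indices is infinite, so the $O(N^{-2})$ errors need not add up, which is precisely the mechanism behind Example \ref{counterexample}.
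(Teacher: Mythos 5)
Your proposal is correct and follows essentially the same route as the paper's proof: expand over a fixed finite basis of $\mc B$ to reduce everything to scalar matrix coefficients with norms bounded uniformly in $N$, invoke the classical $O(N^{-2})$ asymptotic freeness of Haar unitaries from constant matrices for each of the finitely many scalar terms, and sum the errors. The only differences are cosmetic: where you identify the limiting side via Proposition \ref{limitform} and the factorization of $E_N^{(\rho)}$ on elementary tensors, the paper instead builds an explicit free model $a_N(k)=\sum_r a_N(k,r)\otimes e_r$ with the same $\mc B$-valued distribution, and where you sketch the classical Weingarten argument (including the even-parity reason the correction is $O(N^{-2})$ rather than $O(N^{-1})$), the paper simply cites \cite{col1}.
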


\begin{proof}
Let $e_1,\dotsc,e_q$ be a basis for $\mc B$ with $\|e_r\| = 1$ for $1 \leq r \leq q$.
Let $p_1,\dotsc,p_{2m} \in \mc B \langle t(i)| i \in I \rangle$, let $A_N(k) = p_k(D_N)$
and let $A_N(k,r) \in M_N(\C)$ be the matrix of coefficients of the entries of $A_N(k)$
on $e_r$ for $1 \leq k \leq 2m$ and $1 \leq r \leq q$.  Let $a_N(k,r)$ and
$(u(l),u(l)^*)_{l \in \N}$ be random variables in a noncommutative probability space
$(\mc A, \varphi)$ such that
\begin{enumerate}
\item $\{a_N(k,r): 1 \leq k \leq 2m, 1 \leq r \leq q\}$ and $(u(l),u(l)^*)_{l \in \N}$ are free with respect to $\varphi$.
 \item $(a_N(k,r))_{1 \leq k \leq 2m, 1 \leq r \leq q}$ has the same joint distribution with respect to $\varphi$ as $(A_N(k,r))_{1 \leq k \leq 2m, 1 \leq r \leq q}$ with respect to $\tr_N$.
 \item $(u(l),u(l)^*)_{l \in \N}$ are freely independent with respect to $\varphi$ and $u(l)$ has a Haar unitary distribution.
\end{enumerate}
For $1 \leq k \leq 2m$ and $N \in \N$, let $a_N(k) = \sum a_N(k,r) \otimes e_r \in \mc A
\otimes \mc B$, and note that the family $(a_n(k))_{1 \leq k \leq 2m}$ has the same joint
distribution with respect to $E = \varphi \otimes \mathrm{id}_{\mc B}$ as does
$(A_N(k))_{1 \leq k \leq 2m}$ with respect to $E_N$.  Identifying $u(l) = u(l) \otimes 1$
in $\mc A \otimes \mc B$, it is also easy to see that $(u(l),u(l)^*)$ and $(a_N(k))_{1
\leq k \leq 2m}$ are freely independent with respect to $E$.

Now let $\epsilon_1,\dotsc,\epsilon_{2m} \in \{1,*\}$ and consider
\begin{multline*}
 (\tr_N \otimes \mb E \otimes \mathrm{id}_{\mc B})[U(l_1)^{\epsilon_1}A(1)\dotsb A(2m) U(l_{2m})^{\epsilon_{2m}}]\\
= \sum_{1 \leq r_1,\dotsc,r_{2m} \leq q} (\tr_N \otimes \mb
E)[U(l_1)^{\epsilon_1}A(1,r_1)\dotsb A(2m,r_{2m})U(l_{2m})^{\epsilon_{2m}}]e_{r_1}\dotsb
e_{r_{2m}}.
\end{multline*}
Since $\|e_r\| = 1$, it follows that
\begin{multline*}
 \bigl\|(\tr_N \otimes \mb E \otimes \mathrm{id}_{\mc B})[U(l_1)^{\epsilon_1}A(1)\dotsb U(l_{2m})^{\epsilon_{2m}}A(2m)] - E[u(l_1)^{\epsilon_1}a(1)\dotsb u(l_{2m})^{\epsilon_{2m}}a(2m)]\bigr\|\\
\leq \sum_{1 \leq r_1,\dotsc,r_{2m} \leq q} \bigl|(\tr_N \otimes \mb E)[U(l_1)^{\epsilon_1}A(1,r_1)\dotsb U(l_{2m})^{\epsilon_{2m}}A(2m,r_{2m})]\\
 - \varphi[u(l_1)^{\epsilon_1}a(1,r_1)\dotsb u(l_{2m})^{\epsilon_{2m}}a(2m,r_{2m})]\bigr|.
\end{multline*}
From standard asymptotic freeness results (see e.g. \cite{col1}), this expression is
$O(N^{-2})$ as $N \to \infty$.
\end{proof}

\begin{rmk*}  We will now give an example to show that Theorem \ref{mainthm} may fail for classical Haar unitaries if the algebra $\mc B$ is infinite dimensional.  First we recall the Weingarten formula for computing the expectation of a word in the entries of a $N \times N$ Haar unitary random matrix and its conjugate:
\begin{equation*}
 \mb E[U^{\epsilon_1}_{i_1j_1}\dotsb U^{\epsilon_{2m}}_{i_{2m}j_{2m}}] = \sum_{\substack{\pi,\sigma \in \mc P_2^{\epsilon}(2m)\\ \pi \leq \ker \mathbf i\\ \sigma \leq \ker \mathbf j}} W^c_{\epsilon N}(\pi,\sigma),
\end{equation*}
where $\mc P_2^{\epsilon}(2m)$ is the set of pair partitions for which each pairing
connects a $1$ with a $*$ in the string $\epsilon_1,\dotsc,\epsilon_{2m}$,  and
$W^c_{\epsilon N}$ is the corresponding Weingarten matrix, see \cite{col1, bs1}.
\end{rmk*}

\begin{ex} \label{counterexample}
Let $\mc B$ be a unital C$^*$-algebra, and for each $N \in \N$ let $\{E_{ij}(N,l): 1 \leq
i,j \leq N, l = 1,2\}$ be two commuting systems of matrix units in $\mc B$, i.e.,
\begin{enumerate}
\item $E_{i_1j_1}(N,1)E_{i_2j_2}(N,2) = E_{i_2j_2}(N,2)E_{i_1j_1}(N,1)$ for $1 \leq i_1,j_1,i_2,j_2 \leq N$.
 \item $E_{ij}(N,l)^* = E_{ji}(N,l)$ for $1 \leq i,j \leq N$.
 \item $E_{ik_1}(N,l)E_{k_2j}(N,l) = \delta_{k_1k_2}E_{ij}(N,l)$ for $1 \leq i,j,k_1,k_2 \leq N$.
 \item $E_{ii}(N,l)$ is a projection for $1 \leq i \leq N$, and
\begin{equation*}
 \sum_{i=1}^N E_{ii}(N,l) = 1.
\end{equation*}
\end{enumerate}
For $N \in \N$, define $A_N, B_N \in M_N(\mc B)$ by
\begin{align*}
 (A_N)_{ij} &= E_{ji}(N,1), & (B_N)_{ij} &= E_{ji}(N,2).
\end{align*}
Note that $A_N,B_N$ are self-adjoint and $A_N^2,B_N^2$ are the identity matrix, indeed
\begin{equation*}
 (A_N^2)_{ij} = \sum_{k=1}^N E_{ki}(N,1)E_{jk}(N,1) =  \delta_{ij} \sum_{k=1}^N E_{kk}(N,1) = \delta_{ij} \cdot 1,
\end{equation*}
and likewise for $B_N$.  It follows that $\|A_N\| = \|B_N\| = 1$ for $N \in \N$.

For each $N \in \N$, let $U_N$ be a $N \times N$ Haar unitary random matrix, independent
from $\mc B$.  Since
\begin{equation*}
 (\tr_N \otimes \mathrm{id}_{\mc B})[A_N] = \frac{1}{N}\sum_{i=1}^N E_{ii}(N,1) = \frac{1}{N} \cdot 1
\end{equation*}
converges to zero as $N \to \infty$, and likewise for $B_N$, for asymptotic freeness we
should have
\begin{equation*}
 \lim_{N \to \infty} (\tr_N \otimes \mb E \otimes \mathrm{id})[(U_NA_NU_N^*B_N)^3] = 0.
\end{equation*}
However, we will show that this limit is in fact equal to 1.

Indeed, suppressing the subindex $N$ we have
\begin{align*}
 (\tr \otimes \mb{E} \otimes \mathrm{id}_{\mc B})[(UAU^*B)^3] &= \frac{1}{N} \sum_{1 \leq i_1,\dotsc,i_{12} \leq N} \mb E[U_{i_1i_2}\overline{U}_{i_4i_3}\dotsb \overline{U}_{i_{12}i_{11}}] A_{i_2i_3}B_{i_4i_5}\dotsb B_{i_{12}i_1}\\
&= \sum_{\substack{1 \leq i_1,j_1,\dotsc,i_6,j_6 \leq N}} \mb
E[U_{i_6j_1}\overline{U}_{i_1j_2}\dotsb \overline{U}_{i_5j_6}]
A_{j_1j_2}A_{j_3j_4}A_{j_5j_6}B_{i_1i_2}B_{i_3i_4}B_{i_5i_6}.
\end{align*}
Applying the Weingarten formula, we obtain
\begin{equation*}
\sum_{\pi,\sigma \in \mc P_2^{\epsilon}(6)}N^{-1}W^c_{\epsilon
N}(\pi,\sigma)\bigl(\sum_{\substack{1 \leq j_1,\dotsc,j_{6} \leq N\\ \sigma \leq \ker
\mathbf j}}A_{j_1j_2}A_{j_3j_4}A_{j_5j_6} \biggr)\biggl(\sum_{\substack{1 \leq
i_1,\dotsc,i_6 \leq N\\ \overleftarrow{\pi} \leq \ker \mathbf i}}
B_{i_1i_2}B_{i_3i_4}B_{i_5i_6}\biggr).
\end{equation*}
Note that $\mc P_2^{\epsilon}(6)$ has 6 elements, namely the 5 noncrossing pair
partitions and $\tau = \{(1,4),(2,5),(3,6)\}$.  The noncrossing pair partitions can be
expressed as $\widetilde \sigma$ for some $\sigma \in NC(3)$, in which case we have
\begin{equation*}
 \sum_{\substack{1 \leq j_1,\dotsc,j_6 \leq N\\ \widetilde \sigma \leq \ker \mathbf j}} A_{j_1j_2}A_{j_3j_4}A_{j_5j_6} = N^{|\sigma|}E_N^{(\sigma)}[A,A,A].
\end{equation*}
Using $E_N[A] = E_N[A^3] = N^{-1}$ and $E_N[A^2] = 1$, one easily sees that this
expression is $O(N)$ for the 5 noncrossing pair partitions.  For $\tau$, we have
\begin{align*}
 \sum_{\substack{1 \leq j_1,\dotsc,j_6 \leq N\\ \tau \leq \ker \mathbf j}} A_{j_1j_2}A_{j_3j_4}A_{j_5j_6} &= \sum_{1 \leq j_1,j_2,j_3 \leq N} A_{j_1j_2}A_{j_3j_1}A_{j_2j_3}\\
&= \sum_{1 \leq j_1,j_2,j_3 \leq N} E_{j_2j_1}(N,1)E_{j_1j_3}(N,1)E_{j_3j_2}(N,1)\\
&= \sum_{1 \leq j_1,j_2,j_3 \leq N} E_{j_2j_2}(N,1)\\
&= N^2 \cdot 1,
\end{align*}
and likewise for $B_N$.  Also we have $N^{3}W^c_{\epsilon N}(\pi,\sigma) = \delta_{\pi
\sigma} + O(N^{-1})$.  Putting these statements together, we find that the only term
which remains in the limit comes from $\pi = \sigma = \tau$, which gives 1.
\end{ex}

\begin{rmk*} \textit{Remarks}.
\begin{enumerate}
 \item We note that $M_{N^2}(\C) = M_N(\C) \otimes M_N(\C)$ has a natural pair of commuting systems of matrix units, so this example demonstrates that Theorem \ref{mainthm} fails for any unital C$^*$-algebra $\mc B$ which contains $M_{N_k^2}(\C)$ as a unital subalgebra for some increasing sequence of natural numbers $(N_k)$.
\item It is a natural question whether the matrices $A_N,B_N$ in the above example have limiting $\mc B$-valued distributions, which would demonstrate that Theorem \ref{easythm} also fails for classical Haar unitaries.  First observe that
\begin{equation*}
 \lim_{N \to \infty} (\tr_N \otimes \mathrm{\mc B})[A_N^k] = \begin{cases} 1, & \text{$k$ is even}\\ 0, & \text{$k$ is odd} \end{cases},
\end{equation*}
which follows from the case $k = 1$ and the fact that $A_N^2$ is the identity matrix.
However, it is not clear that moments of the form $b_0A_N\dotsb A_Nb_k$ will converge for
arbitrary $b_0,\dotsc,b_k \in \mc B$.

 Let us point out a special case in which the limiting distribution does exist.  Suppose that there is a dense $*$-subalgebra $\mc F \subset \mc B$ such that each element of $\mc F$ commutes with the matrix units $E_{ij}(N,l)$ for $N$ sufficiently large.  Then for any $b_0,\dotsc,b_k \in \mc B$ we have
\begin{equation*}
  \lim_{N \to \infty} (\tr_N \otimes \mathrm{\mc B})[b_0A_N\dotsb A_Nb_k] = \begin{cases} b_0b_1\dotsb b_k, & \text{$k$ is even}\\ 0, & \text{$k$ is odd} \end{cases},
\end{equation*}
and likewise for $B_N$, indeed this holds for $b_0,\dotsc,b_k \in \mc F$ by hypothesis
and for general $b_0,\dotsc,b_k$ by density.

In particular, we may take $\mc B$ to be the C$^*$-algebraic infinite tensor product
\begin{equation*}
 \mc B = \bigotimes_{N \in \N} M_N(\C)
\end{equation*}
with the obvious systems of matrix units $E(N,l)_{ij} \in M_{N^2} = M_N(\C) \otimes
M_N(\C) \subset \mc B$, and $\mc F \subset \mc B$ to be the image of the purely algebraic
tensor product.  Note that $\mc B$ is \textit{uniformly hyperfinite}, in particular
\textit{approximately finitely dimensional} in the C$^*$-sense.

\item Note that if $\mc B$ is a von Neumann algebra with a non-zero \textit{continuous} projection $p$, then $p\mc B p$ contains $M_N(\C)$ as a unital subalgebra for all $N \in \N$ and hence (1) applies to $p\mc B p$.  It follows that Theorem \ref{mainthm} fails also for $\mc B$.  To obtain a contradiction to Theorem \ref{easythm} for classical Haar unitaries in the setting of a von Neumann algebra with faithful, normal trace, we may modify the example in (2) by taking $(\mc B,\tau)$ to be the infinite tensor product
\begin{equation*}
 (\mc B, \tau) = \bigotimes_{N \in \N} (M_N(\C),\tr_N)
\end{equation*}
taken with respect to the trace states $\tr_N$ on $M_N(\C)$, which is the
\textit{hyperfinite $II_1$ factor}.
\end{enumerate}

\end{rmk*}


\begin{thebibliography}{10}

\bibitem{ban1}
{\sc T.~Banica}, {\em Le groupe quantique compact libre {${\rm U}(n)$}}, Comm.
  Math. Phys., 190 (1997), pp.~143--172.

\bibitem{bc1}
{\sc T.~Banica and B.~Collins}, {\em {Integration over compact quantum
  groups}}, Publ. Res. Inst. Math. Sci., 43 (2007), pp.~277--302.

\bibitem{bc2}
\leavevmode\vrule height 2pt depth -1.6pt width 23pt, {\em {Integration over
  quantum permutation groups.}}, J. Funct. Anal., 242 (2007), pp.~641--657.

\bibitem{bcs2}
{\sc T.~Banica, S.~Curran, and R.~Speicher}, {\em De {F}inetti theorems for
  easy quantum groups}, Preprint. \href{http://arxiv.org/abs/0907.3314}{{\tt
  arXiv:0907.3314}} [math.OA], 2009.

\bibitem{bs1}
{\sc T.~Banica and R.~Speicher}, {\em Liberation of orthogonal {L}ie groups},
  Adv. Math., 222 (2009), pp.~1461--1501.

\bibitem{bsh}
{\sc S.~Belinschi and D.~Shlyakhtenko}, {\em Free probability of type {B}:
  Analytic interpretation and applications}, Preprint.
  \href{http://arxiv.org/abs/0903.2721}{{\tt arXiv:0903.2721}} [math.OA], 2009.

\bibitem{bgn}
{\sc P.~Biane, F.~Goodman, and A.~Nica}, {\em Non-crossing cumulants of type
  {B}}, Trans. Amer. Math. Soc., 355 (2003), pp.~2263--2303.

\bibitem{col1}
{\sc B.~Collins}, {\em Moments and cumulants of polynomial random variables on
  unitary groups, the {I}tzykson-{Z}uber integral, and free probability}, Int.
  Math. Res. Not.,  (2003), pp.~953--982.

\bibitem{colsn1}
{\sc B.~Collins and P.~{\'S}niady}, {\em Integration with respect to the {H}aar
  measure on unitary, orthogonal and symplectic group}, Comm. Math. Phys., 264
  (2006), pp.~773--795.

\bibitem{cur3}
{\sc S.~Curran}, {\em Quantum exchangeable
  sequences of algebras}, Indiana Univ. Math. J., 58 (2009), pp.~1097--1126.

\bibitem{cur4}
\leavevmode\vrule height 2pt depth -1.6pt width 23pt, {\em Quantum rotatability}, Trans. Amer. Math. Soc., to appear.

\bibitem{fen}
{\sc M.~Fevrier and A.~Nica}, {\em Infinitesimal non-crossing cumulants and
  free probability of type {B}}, Preprint.
  \href{http://arxiv.org/abs/0906.2017}{{\tt arXiv:0906.2017}} [math.OA], 2009.

\bibitem{ks}
{\sc V.~Kodiyalam and V.S.~Sunder}, {\em Temperley-Lieb and non-crossing partition planar algebras}, in Noncommutative rings, group rings, diagram algebras and their applications, vol. 456 of Contemp. Math., Amer. Math. Soc., Providence, RI, 2008, pp.~61--72.

\bibitem{ksp}
{\sc C.~K{\"o}stler and R.~Speicher}, {\em A noncommutative de {F}inetti
  theorem: Invariance under quantum permutations is equivalent to freeness with
  amalgamation}, Comm. Math. Phys., 291 (2009), pp.~473--490.

\bibitem{ns}
{\sc A.~Nica and R.~Speicher}, {\em Lectures on the Combinatorics of Free
  Probability}, no.~335 in London Mathematical Society Lecture Note Series,
  Cambridge University Press, 2006.

\bibitem{sp1}
{\sc R.~Speicher}, {\em {Combinatorial theory of the free product with
  amalgamation and operator-valued free probability theory.}}, Mem. Am. Math.
  Soc., 627 (1998), p.~88.

\bibitem{voi4}
{\sc D.~Voiculescu}, {\em Limit laws for random matrices and free products},
  Invent. Math., 104 (1991), pp.~201--220.

\bibitem{voi1}
\leavevmode\vrule height 2pt depth -1.6pt width 23pt, {\em Operations on certain non-commutative operator-valued
  random variables}, Ast{\'e}risque, 232 (1995), pp.~243--275.

\bibitem{wang1}
{\sc S.~Wang}, {\em Free products of compact quantum groups}, Comm. Math.
  Phys., 195 (1995), pp.~671--692.

\bibitem{wang2}
\leavevmode\vrule height 2pt depth -1.6pt width 23pt, {\em Quantum symmetry
  groups of finite spaces}, Comm. Math. Phys., 195 (1998), pp.~195--211.

\bibitem{wor1}
{\sc S.~Woronowicz}, {\em Compact matrix pseudogroups}, Comm. Math. Phys., 111
  (1987), pp.~613--665.

\end{thebibliography}
\end{document}